\begin{document}

\newtheorem{tm}{Theorem}[section]
\newtheorem{rk}{Remark}[section]
\newtheorem{prop}[tm]{Proposition}
\newtheorem{defin}[tm]{Definition}
\newtheorem{coro}[tm]{Corollary}

\newtheorem{lem}[tm]{Lemma}
\newtheorem{assumption}[tm]{Assumption}

\newtheorem{nota}[tm]{Notation}
\numberwithin{equation}{section}

\newcommand{\stk}[2]{\stackrel{#1}{#2}}
\newcommand{\dwn}[1]{{\scriptstyle #1}\downarrow}
\newcommand{\upa}[1]{{\scriptstyle #1}\uparrow}
\newcommand{\nea}[1]{{\scriptstyle #1}\nearrow}
\newcommand{\sea}[1]{\searrow {\scriptstyle #1}}
\newcommand{\csti}[3]{(#1+1) (#2)^{1/ (#1+1)} (#1)^{- #1
 / (#1+1)} (#3)^{ #1 / (#1 +1)}}
\newcommand{\RR}[1]{\mathbb{#1}}

\newcommand{ \bl}{\color{blue}}
\newcommand {\rd}{\color{red}}
\newcommand{ \bk}{\color{black}}
\newcommand{ \gr}{\color{OliveGreen}}
\newcommand{ \mg}{\color{RedViolet}}

\newcommand{\ep}{\varepsilon}
\newcommand{\rr}{{\mathbb R}}
\newcommand{\alert}[1]{\fbox{#1}}

\newcommand{\eqd}{\sim}
\def\R{{\mathbb R}}
\def\N{{\mathbb N}}
\def\Q{{\mathbb Q}}
\def\C{{\mathbb C}}
\def\l{{\langle}}
\def\r{\rangle}
\def\t{\tau}
\def\k{\kappa}
\def\a{\alpha}
\def\la{\lambda}
\def\De{\Delta}
\def\de{\delta}
\def\ga{\gamma}
\def\Ga{\Gamma}
\def\ep{\varepsilon}
\def\eps{\varepsilon}
\def\si{\sigma}
\def\Re {{\rm Re}\,}
\def\Im {{\rm Im}\,}
\def\E{{\mathbb E}}
\def\P{{\mathbb P}}
\def\Z{{\mathbb Z}}
\def\D{{\mathbb D}}
\newcommand{\ceil}[1]{\lceil{#1}\rceil}

\title{Parabolic-elliptic chemotaxis model with space-time dependent logistic sources on $\mathbb{R}^N$. I. Persistence and asymptotic spreading}

\author{
Rachidi B. Salako and Wenxian Shen  \\
Department of Mathematics and Statistics\\
Auburn University\\
Auburn University, AL 36849\\
U.S.A. }

\date{}
\maketitle
\begin{abstract}
\noindent
The current series of three papers is concerned with the asymptotic dynamics in the following parabolic-elliptic chemotaxis system with space and time dependent logistic source,
\begin{equation}\label{main-eq-abstract}
\begin{cases}
\partial_tu=\Delta u -\chi\nabla\cdot(u\nabla v)+u(a(x,t)-ub(x,t)),\quad x\in\R^N,\cr
0=\Delta v-\lambda v+\mu u ,\quad x\in\R^N,
\end{cases}
\end{equation}
where $N\ge 1$ is a positive integer,  $\chi, \lambda$ and $\mu$ are positive constants,
 and the functions $a(x,t)$ and $b(x,t)$ are positive and bounded.
In the first of the series, we investigate the persistence and asymptotic spreading in  \eqref{main-eq-abstract}. To this end,
 under some explicit condition on the parameters, we first show that \eqref{main-eq-abstract} has a unique nonnegative time global classical solution
  $(u(x,t;t_0,u_0),v(x,t;t_0,u_0))$ with $u(x,t_0;t_0,u_0)=u_0(x)$ for every $t_0\in\R$ and every  nonnegative bounded and uniformly continuous initial function
  $u_0$. Next we show the  pointwise  persistence phenomena of the solutions  in the sense that, for any solution  $(u(x,t;t_0,u_0),v(x,t;t_0,u_0))$ of \eqref{main-eq-abstract} with strictly positive initial function $u_0$, there are $0<m(u_0)\le M(u_0)<\infty$ such that
  $$
  m(u_0)\le u(x,t+t_0;t_0,u_0)\le M(u_0)\quad \forall\,\, t\ge 0,\,\, x\in\R^N,
  $$
  and show the uniform persistence phenomena of  solutions in the sense that there are $0<m<M$ such that for any strictly positive initial function $u_0$,
  there is $T(u_0)>0$ such that
  $$
  m\le u(x,t+t_0;t_0,u_0)\le M\quad \forall\, t\ge T(u_0),\,\, x\in\R^N.
  $$
   We then discuss  the spreading properties of solutions to \eqref{main-eq-abstract} with compactly supported initial function and prove that there are positive constants $0<c_{-}^{*}(a,b,\chi,\lambda,\mu)\le c_{+}^{*}(a,b,\chi,\lambda,\mu)<\infty$ such that for every $t_0\in \R$ and every nonnegative initial function $u_0\in C^b_{\rm unif}(\R^N)$ with nonempty compact support, we have that
$$
\lim_{t\to\infty}\sup_{|x|\geq ct}u(x,t+t_0;t_0,u_0)=0, \quad \forall c> c_{+}^{*}(a,b,\chi,\lambda,\mu),
$$
and
$$
\liminf_{t\to\infty}\sup_{|x|\leq ct}u(x,t+t_0;t_0,u_0)>0, \quad \forall 0< c< c_{-}^{*}(a,b,\chi,\lambda,\mu).
$$
We also discuss the spreading properties of solutions to \eqref{main-eq-abstract} with front-like initial functions.
In the second and third of the series, we will study the existence, uniqueness, and stability of strictly positive entire solutions of \eqref{main-eq-abstract} and the existence of transition fronts of \eqref{main-eq-abstract}, respectively.
\end{abstract}

\medskip
\noindent{\bf Key words.} Parabolic-elliptic chemotaxis system, logistic source, pointwise persistence, uniform persistence, asymptotic spreading, comparison principle.

\medskip
\noindent {\bf 2010 Mathematics Subject Classification.}  35B35, 35B40, 35K57, 35Q92, 92C17.

\section{Introduction and Statement of the Main Results}

The current series of three papers is devoted to the study of the asymptotic dynamics of the following parabolic-elliptic  chemotaxis system  with space and time dependent logistic source on $\R^N$,
\begin{equation}\label{P}
\begin{cases}
\partial_tu=\Delta u -\chi\nabla\cdot(u\nabla v)+u(a(x,t)-b(x,t)u),\quad x\in\R^N,\cr
0=\Delta v-\lambda v+\mu u ,\quad x\in\R^N,
\end{cases}
\end{equation}
where $u(x,t)$ and $v(x,t)$ denote the population densities of some mobile species and chemical substance, respectively,
 $\chi$ is a positive constant which measures the sensitivity of the mobile species to the chemical substance, $a(x,t)$ and $b(x,t)$ are positive functions  and measure the self growth and self limitation of the mobile species, respectively. The constant $\mu$ is positive and the term $+\mu u$ in the second equation of \eqref{P} indicates that the mobile species produces the chemical substance over time. The positive constant $\lambda$ measures the degradation rate of the chemical substance.  System \eqref{P}  is a space-time logistic source dependant variant of the celebrated parabolic-elliptic Keller-Segel chemotaxis systems (see \cite{KeSe1, KeSe2}). 
  Chemotaxis, the oriented movements of biological cells and organism in response to chemical gradient, plays a very important role in a wide range of biological phenomena (see \cite{ISM04,DAL1991,KJPJAS03}, etc.), and accordingly a considerable literature is concerned with its mathematical analysis.

It is  known that chemotaxis systems present very interesting dynamics. For example, consider the following chemotaxis model,
\begin{equation}\label{general model}
\begin{cases}
\partial_tu=\Delta u -\chi\nabla\cdot(u\nabla v)+u(a(x,t)-b(x,t)u),\quad x\in\Omega,\cr
\tau v_t=\Delta v-\lambda v+\mu u ,\quad x\in\Omega
\end{cases}
\end{equation}
complemented with certain boundary conditions if $\Omega \subset \R^N$ is a bounded domain, where $\tau\geq 0$ is a nonnegative constant link to the speed of diffusion of the chemical substance. Note that when $\tau=0$ and $\Omega=\R^N$ in \eqref{general model}, we recover \eqref{P}. Hence, \eqref{P}  models the situation where the chemoattractant defuses very quickly and the underlying environment is very large. When the functions $a(x,t)$ and $b(x,t)$ are identically equal to zero,  and $\chi>0$ in \eqref{general model}, it is known that finite time blow-up  may occurs if either $N=2$ and the total initial population mass is large enough, or $N\geq 3$ (see \cite{ Dirk and Winkler, KKAS, kuto_PHYSD, NAGAI_SENBA_YOSHIDA, Hil1,  win_jde, win_JMAA_veryweak, win_arxiv}). It is also known that some radial solutions to \eqref{P} in plane collapse into a persistent Dirac-type singularity in the sense that a globally defined measure-valued solution exists which has a singular part beyond some finite time and asymptotically approaches a Dirac measure (see \cite{LSV1,TeWi1}). We refer the reader to \cite{BBTW,HoSt} and the references therein for more insights in the studies of chemotaxis models.

It is also  known that when \eqref{general model} is considered with logistic source, that is $a(x,t)>0$ and $b(x,t)>0$, the finite time blow-up phenomena may be suppressed to some extent. For example, when $\Omega$ is a bounded smooth domain,  $a(x,t)$ and $b(x,t)$ are constant positive functions, $\tau=0$ and  $\lambda=\mu=1$, it is shown in \cite{TeWi2} that if either  $N\leq 2$ or $b>\frac{N-2}{N}\chi$, then  for every nonnegative H\"older's continuous initial $u_0(x)$, \eqref{general model} with Neumann boundary condition possesses a unique  bounded global classical solution $(u(x,t;u_0),v(x,t;u_0))$ with $u(x,0;u_0)=u_0$. Furthermore, if $b>2\chi$, then the trivial steady state $(\frac{a}{b},\frac{a}{b})$ is asymptotically stable with respect to nonnegative and non-identically zero perturbations. These results are extended by the authors of the current paper,  \cite{SaSh1}, to \eqref{P} on $\R^N$ when $a(x,t)$ and $b(x,t)$ are constant functions. The work \cite{SaSh1} also studied some spreading properties of solutions to \eqref{P} with compactly supported initial functions. When $a(x,t)$ and $b(x,t)$ are constant positive functions, $\tau=1$ and  $\lambda=\mu=1$, it is shown in \cite{Win} that it is enough for  $\frac{b}{\chi}$ to be sufficiently large to prevent finite time blow up of classical solutions and to guarantee the stability of the constant equilibrium solution $(\frac{a}{b},\frac{a}{b})$.


In reality, the environments of many living organisms are spatially and temporally heterogeneous. It is of both biological and mathematical interests to study chemotaxis models with certain time and space dependence.
In the case that the chemotaxis is absent
(i.e. $\chi=0$) in \eqref{general model}, the population density $u(x,t)$ of the mobile species
satisfies the following  scalar reaction diffusion equation,
\begin{equation}\label{KPP-Fisher equation}
\partial_t u=\Delta u + u(a(x,t)-b(x,t)u), \  x\in \Omega
\end{equation}
complemented with certain boundary conditions if $\Omega\subset \R^N$ is a bounded domain.
Equation \eqref{KPP-Fisher equation} is called Fisher or KPP type equation in literature because of the pioneering works by Fisher (\cite{Fisher}) and Kolmogorov, Petrowsky, Piskunov
(\cite{KPP}) in the special case $a(t,x)=b(t,x)=1$. A huge amount of research has been carried out toward the asymptotic dynamics of \eqref{KPP-Fisher equation}, see, for example , \cite{CaCo, HeWe, NMN00, ShYi, QXZ}, etc.  for the asymptotic dynamics of \eqref{KPP-Fisher equation}
on bounded domains, and \cite{Berestycki1, BeHaNa1, BeHaNa2, Henri1, Fre, FrGa, LiZh, LiZh1, Nad, NoRuXi, NoXi1, She1, She2, Wei1, Wei2, Zla}, etc.
for the asymptotic dynamics of \eqref{KPP-Fisher equation} on unbounded domains.
In the very recent work \cite{ITBWS16}, the authors studied the asymptotic dynamics of \eqref{general model} on bounded domain $\Omega$ with Neumann boundary condition and with space and time  dependent logistic source. However, there is little study on the asymptotic dynamics of \eqref{general model} on unbounded domain with space and time dependent logistic source.

The objective of the  current series of three papers is to carry out a systematic study of the asymptotic dynamics of the chemotaxis model \eqref{P}
on the whole space
with $a$ and $b$ being depending on both $x$ and $t$.
In this first part of the series, we investigate the global existence of nonnegative classical solutions of \eqref{P},
the persistence of classical solutions of \eqref{P} with strictly positive initial functions, and the asymptotic spreading properties of classical solutions of   \eqref{P} with compact supported and front-like nonnegative initial functions. In the second and third of the series, we will study the existence, uniqueness, and stability of strictly positive entire solutions of \eqref{P} and the existence of transition fronts of \eqref{P}, respectively.

In the rest of the introduction, we introduce the notations and standing assumptions, and state the main results of this paper.


\subsection{Notations and standing assumptions}

For every $x=(x_1,x_2,\cdots,x_N)\in\R^N$, let $|x|_{\infty}=\max\{|x_i|, \ |\ i=1,\cdots,N\}$ and $|x|=\sqrt{|x_1|^2+\cdots+|x_N|^2}$. We define
$$S^{N-1}:=\{x\in\R^N\ |\ |x|=1\}.
$$
For $x=(x_1,x_2,\cdots,x_N),y=(y_1,y_2,\cdots,y_N)\in\R^N$, $x\cdot y=\sum_{i=1}^N x_iy_i$. For every $x\in\R^N$ and $r>0$ we define
$$B(x,r):=\{y\in\R^N\ |\ |x-y|<r\}.$$

  Let
$$
C_{\rm unif}^b(\R)=\{u\in C(\R)\,|\, u(x)\quad \text{is uniformly continuous in}\,\,\, x\in\R\quad \text{and}\,\, \sup_{x\in\R}|u(x)|<\infty\}
$$
equipped with the norm $\|u\|_\infty:=\sup_{x\in\R}|u(x)|$. For any $0\le \nu<1$, let
$$
C_{\rm unif}^{b,\nu}(\R^N):=\{u\in C_{\rm unif}^b(\R^N)\,|\, \sup_{x,y\in\R,x\not = y}\frac{|u(x)-u(y)|}{|x-y|^\nu}<\infty\}
$$
with norm $\|u\|_{C_{\rm unif}^{b,\nu}}:=\sup_{x\in\R}|u(x)|+\sup_{x,y\in\R,x\not =y}\frac{|u(x)-u(y)|}{|x-y|^\nu}$. Hence $C_{\rm unif}^{b,0}(\R^N)=C_{\rm unif}^{b}(\R^N)$.

 For every function $w : \R^n\times I\to \R$, where $I\subset \R$, we set $w_{\inf}(t):=\inf_{x\in\R^N}w(x,t)$, $w_{\sup}(t):=\sup_{x\in\R^N}w(x,t)$, $w_{\inf}=\inf_{x\in\R^N,t\in I}w(x,t)$ and $w_{\sup}=\sup_{x\in\R^N,t\in I}w(x,t)$.  In particular for every $u_0\in C^{b}_{\rm unif}(\R^n)$, we set $u_{0\inf}=\inf_{x}u_0(x) $ and  $u_{0\sup}=\sup_{x}u_0(x)$.

A function  $u_0\in C^b_{\rm unif}(\R^N)$ is called {\it nonnegative} if $u_{0\inf}\ge 0$, {\it positive} if $u_0(x)>0$ for all $x\in\R^N$,
and {\it strictly positive} if $u_{0\inf}>0$. For given $\xi\in S^{N-1}$, a nonnegative function $u_0\in C_{\rm unif}^b(\R^N)$ is call a {\it front-like function in the direction of $\xi$} if $\liminf_{x\cdot\xi \to -\infty}u_0(x)>0$ and $u_0(x)=0$ for $x\in\R^N$ with $x\cdot \xi\gg 1$.

In what follows we shall always suppose that the following hypothesis holds:

\medskip

\noindent {\bf (H)} {\it $a(t,x)$ and  $b(t,x)$ are uniformly H\"older continuous in $(x,t)\in\R^N\times\R$ with exponent $0<\nu<1$ 
and
$$
 0<\inf_{t\in\R,x\in\R^n}\min\{a(x,t), b(x,t)\} \leq \sup_{t\in\R,x\in \R^n}\max\{a(x,t),b(x,t)\}<\infty.
$$
}

\subsection{Statements of  the main results}

The objective of the current part of the series is to investigate the global existence and  persistence of nonnegative bounded classical solutions
of \eqref{P},
 and the spreading properties of nonnegative classical solutions of  \eqref{P} with compactly supported or front-like initial functions.

    We say that $(u(x,t),v(x,t))$ is a {\it classical solution } of \eqref{P} on $[t_0, T)$ if
 $(u(\cdot,\cdot),v(\cdot,\cdot))\in C(\R^N\times[t_0,T))\cap C^{2,1}(\R^N\times(t_0,T))$
    and satisfies \eqref{P} for $(x,t)\in\R^N\times(t_0,T)$ in the classical sense.
    When  a classical solution  $(u(x,t),v(x,t))$ of \eqref{P} on $[t_0,T)$ satisfies
    $u(x,t)\geq 0$ and $v(x,t)\geq 0$ for every $(x,t)\in\R^N\times[t_0,T)$, we say that it is nonnegative. A {\it global classical solution}  of \eqref{P} on $[t_0,\infty)$ is a classical solution on $[t_0, T)$ for every $T>0$. We say that $(u(x,t),v(x,t))$ is an
   {\it entire solution} of \eqref{P} if $(u(x,t),v(x,t))$ is a global classical solution of \eqref{P} on $[t_0, \infty)$ for every $t_0\in\R.$
     For given  $u_0\in C_{\rm unif}^b(\R^N)$ and $t_0,T\in\R$ with $T>t_0$, if $(u(x,t),v(x,t))$ is a classical solution of \eqref{P} on $[t_0, T)$ with
     $u(x,t_0)=u_0(x)$ for all $x\in\R$, we
     denote it as
    $(u(x,t;t_0,u_0),v(x,t;t_0,u_0))$ and call it the {\it solution of \eqref{P} with initial function
    $u_0(x)$ at time $t_0$}.

Note that, due to biological interpretations, only nonnegative initial functions will be of interest. Note also that for $u_0\equiv 0$,
$(u(x,t;t_0,u_0),v(x,t;t_0,u_0))\equiv (0,0)$ for all $t\in\R$ and $x\in\R^N$.
From both mathematical and biological point of view, it is important to find conditions which guarantee the global existence of
$(u(x,t;t_0;u_0),v(x,t;t_0,u_0))$ for every $t_0\in\R$ and $u_0\in C_{\rm unif}^b(\R^N)\setminus\{0\}$ with $u_0\ge 0$.
We have the following result on the global  existence of  classical solutions $(u(x,t;t_0,u_0),v(x,t;t_0,u_0))$ of \eqref{P} with
$u_0\ge 0$ and $u_0\not\equiv 0$.

\begin{tm}[Global existence]\label{global-existence-tm}
Suppose that $\chi\mu \leq b_{\inf},$
then for every $t_0\in\R$ and nonnegative function $u_0\in C^{b}_{\rm unif}(\R^n)\setminus\{0\}$, \eqref{P} has a unique nonnegative global classical solution $(u(x,t;t_0,u_0)$, $v(x,t;t_0,u_0))$
satisfying
$$
\lim_{t\searrow 0}\|u(\cdot,t_0+t;t_0,u_0)-u_0\|_{\infty}=0.
$$ Moreover, it holds that
\begin{equation}\label{u-upper-bound-eq1}
\|u(\cdot,t+t_0;t_0,u_0)\|_{\infty}\leq \|u_0\|_{\infty}e^{ a_{\sup} t}.
\end{equation}
Furthermore, if $${\bf (H1):}\quad b_{\inf}>\chi\mu$$  holds, then the following hold.
\begin{description}
\item[(i)] For every nonnegative initial function $  u_0\in C^{b}_{\rm unif}(\R^N)\setminus\{0\}$ and $t_0\in\R$, there hold
\begin{equation}\label{u-upper-bound-eq2}
0\leq u(x,t+t_0;t_0,u_0)\leq \max\{ \|u_0\|_{\infty}, \frac{a_{\sup}}{b_{\inf}-\chi\mu}\}\,\, \forall\,\, t\ge 0,\,\, \forall\, x\in\R^N,
\end{equation}
and
\begin{equation}\label{u-upper-bound-eq3}
 \limsup_{t\to\infty} \|u(\cdot,t+t_0;t_0,u_0)\|_{\infty}\leq \frac{a_{\sup}}{b_{\inf}-\chi\mu}.
\end{equation}

\item[(ii)]  For every $u_0\in C_{\rm unif}^b(\R^N)$ with
$\inf_{x\in\R^N}u_0(x)>0$ and $t_0\in\R$, there hold
\begin{equation}
\label{asymptotic-lower-bound}
\frac{a_{\inf}}{b_{\sup}}\leq\limsup_{t\to\infty} \sup_{x\in\R^N}u(x,t+t_0;t_0,u_0),\quad \liminf_{t\to\infty}\inf_{x\in\R^N}u(x,t+t_0;t_0,u_0)\le \frac{a_{\sup}}{b_{\inf}}.
\end{equation}

\item[(iii)] For every positive real number $M>0$, there  is a constant $K_{1}=K_1(\nu,M,a,b)$ such that for every $  u_0\in C^{b}_{\rm unif}(\R^N)$ with $0\leq u_0\leq M$, we have
\begin{equation}\label{uniform-holder-bound-for-v}
\|v(\cdot,t+t_0;t_0,u_0)\|_{C^{1,\nu}_{\rm unif}(\R^N)}\leq K_1, \quad \forall \ t_0\in\R,\ \forall\ t\geq 0.
\end{equation}
\end{description}
\end{tm}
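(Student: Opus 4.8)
The proof of Theorem \ref{global-existence-tm} naturally splits into a local existence/uniqueness argument, an a priori $L^\infty$-bound that promotes the local solution to a global one, and the asymptotic refinements (i)--(iii). The starting observation is that, because the $v$-equation is elliptic, one may solve it explicitly in terms of $u$: for $u(\cdot,t)\in C^b_{\rm unif}(\R^N)$ the operator $(\lambda-\Delta)^{-1}$ is bounded on $C^b_{\rm unif}$, so $v(\cdot,t)=\mu(\lambda-\Delta)^{-1}u(\cdot,t)$ and, by elliptic regularity, $\|v(\cdot,t)\|_{C^{2,\nu}_{\rm unif}}\lesssim \|u(\cdot,t)\|_\infty$. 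Substituting this into the first equation turns \eqref{P} into a single nonlocal parabolic equation for $u$. I would then set up the mild/integral formulation using the heat semigroup $e^{t\Delta}$ on $C^b_{\rm unif}(\R^N)$, write the chemotaxis term as $-\chi\nabla\cdot(u\nabla v)=-\chi\nabla u\cdot\nabla v-\chi u\Delta v=-\chi\nabla u\cdot\nabla v-\chi u(\mu u-\lambda v)$ (using the second equation), and run a standard contraction-mapping argument in $C([t_0,t_0+\tau];C^b_{\rm unif})$ for small $\tau$; the smoothing estimate $\|e^{t\Delta}\nabla\cdot F\|_\infty\lesssim t^{-1/2}\|F\|_\infty$ handles the divergence-form nonlinearity. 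Parabolic bootstrapping then gives the claimed $C^{2,1}$ regularity and the initial-value continuity $\lim_{t\searrow 0}\|u(\cdot,t_0+t;t_0,u_0)-u_0\|_\infty=0$.

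\textbf{From local to global.} The key is a pointwise upper bound on $u$. Rewriting the equation as $\partial_t u=\Delta u-\chi\nabla v\cdot\nabla u+u\big(a(x,t)-\lambda\chi v-(b(x,t)-\chi\mu)u\big)$, I would apply the parabolic comparison principle: the spatially constant function $\bar u(t)$ solving $\bar u'=\bar u(a_{\sup}-(b_{\inf}-\chi\mu)\bar u)$ with $\bar u(t_0)=\|u_0\|_\infty$ is a supersolution once $v\ge 0$ (which itself follows from $u\ge 0$ via the maximum principle for $\lambda-\Delta$), because the term $-\lambda\chi v$ only helps. When $\chi\mu\le b_{\inf}$ this ODE has no finite-time blow-up, giving the crude bound $\|u(\cdot,t+t_0)\|_\infty\le\|u_0\|_\infty e^{a_{\sup}t}$ of \eqref{u-upper-bound-eq1} (taking $b_{\inf}-\chi\mu\ge 0$ and dropping the quadratic term), hence the solution is global. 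Under the strict inequality {\bf (H1)}, the logistic ODE $\bar u'=\bar u(a_{\sup}-(b_{\inf}-\chi\mu)\bar u)$ is genuinely logistic: every solution with positive initial datum satisfies $\bar u(t)\le\max\{\bar u(t_0),\tfrac{a_{\sup}}{b_{\inf}-\chi\mu}\}$ and $\limsup_{t\to\infty}\bar u(t)\le\tfrac{a_{\sup}}{b_{\inf}-\chi\mu}$, which by comparison yields \eqref{u-upper-bound-eq2} and \eqref{u-upper-bound-eq3}.

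\textbf{The asymptotic statements.} For (ii) I would argue by contradiction / via spatial suprema and infima: let $U(t)=\sup_x u(x,t+t_0)$ and note that along a sequence where the sup is nearly attained the chemotaxis term is controlled, so $U$ is (in a barrier sense) a subsolution of $\bar u'\ge\bar u(a_{\inf}-b_{\sup}\bar u)$ is the wrong direction — instead, using that $-\chi\nabla\cdot(u\nabla v)$ integrates to something sign-indefinite but that near a spatial maximum $\Delta u\le 0$ and $\nabla u=0$, one obtains that if $\limsup U(t)<a_{\inf}/b_{\sup}$ then eventually $a(x,t)-\lambda\chi v-(b(x,t)-\chi\mu)u>0$ forces growth, a contradiction; the infimum statement is symmetric and uses $v\le \tfrac{\mu}{\lambda}\|u\|_\infty$. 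Finally (iii) is pure elliptic regularity: with $0\le u_0\le M$, \eqref{u-upper-bound-eq1} gives a time-local bound on $\|u\|_\infty$, but to get a bound uniform in $t\ge 0$ and $t_0$ one first needs \eqref{u-upper-bound-eq2} (so this part implicitly uses {\bf (H1)}, or else $M$ must absorb the exponential growth on bounded time intervals); then $v=\mu(\lambda-\Delta)^{-1}u$ and the Schauder estimate $\|(\lambda-\Delta)^{-1}f\|_{C^{2,\nu}_{\rm unif}}\le C(\nu,\lambda)\|f\|_\infty$ give \eqref{uniform-holder-bound-for-v} with $K_1=K_1(\nu,M,a,b)$. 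The main obstacle throughout is the chemotaxis term $-\chi\nabla\cdot(u\nabla v)$: it is not sign-definite and carries a derivative of $u$, so the comparison principle cannot be applied naively. The trick that makes everything work is eliminating $\Delta v$ in favor of $\mu u-\lambda v$ via the second equation, after which the only ``bad'' term is the transport term $-\chi\nabla v\cdot\nabla u$, which is harmless for comparison since it vanishes at interior extrema of $u$ and can be treated as a (bounded-coefficient, since $\|\nabla v\|_\infty\lesssim\|u\|_\infty$) drift in the linear parabolic operator.
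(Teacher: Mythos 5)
Your overall route matches the paper's: eliminate $\Delta v$ via the elliptic equation so that the chemotaxis term becomes a drift $-\chi\nabla v\cdot\nabla u$ plus a zeroth-order contribution $\chi\mu u^2-\chi\lambda vu$, use $v\ge 0$ to discard the $-\chi\lambda vu$ term, compare with the spatially constant logistic ODE $\bar u'=\bar u(a_{\sup}-(b_{\inf}-\chi\mu)\bar u)$ to get \eqref{u-upper-bound-eq1}--\eqref{u-upper-bound-eq3}, and then use elliptic regularity for (iii). This is exactly the paper's plan, with the local existence part delegated (as in the paper) to the construction in \cite{SaSh1}.

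Two points need attention. In (iii) you invoke a ``Schauder estimate $\|(\lambda-\Delta)^{-1}f\|_{C^{2,\nu}_{\rm unif}}\le C\|f\|_\infty$.'' That is not a Schauder estimate and it is false as stated: Schauder theory requires $f\in C^{\nu}$ to reach $C^{2,\nu}$, and from $f\in L^\infty$ alone one only obtains $W^{2,p}_{\rm loc}$ control (Calder\'on--Zygmund), hence by Sobolev embedding $C^{1,\nu}_{\rm unif}$ — which is precisely the regularity claimed in \eqref{uniform-holder-bound-for-v} and precisely what the paper proves, by splitting $v$ locally on $B(0,3)$ into a particular solution $G_1$ of $\Delta G_1=\lambda v-\mu u$ (estimated in $W^{2,p}\hookrightarrow C^{1,\nu}$) and a harmonic remainder $G_2$ (estimated via the Poisson kernel). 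Your argument closes once you replace $C^{2,\nu}$ by $C^{1,\nu}$ and cite $L^p$-elliptic regularity rather than Schauder; as written the claim overshoots.

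In (ii) your ``near a spatial maximum $\Delta u\le 0$, $\nabla u=0$'' heuristic is not a proof on $\R^N$: the supremum need not be attained, and you flag the confusion yourself (``is the wrong direction''). The paper's argument is cleaner: from $0\le v\le\frac{\mu}{\lambda}\|u\|_\infty$ one gets, for $t\ge T^\epsilon$, $u_t\ge\Delta u-\chi\nabla v\cdot\nabla u+u\big(a_{\inf}-b_{\sup}(u^\infty+\epsilon)\big)$ with $u^\infty=\limsup_t\sup_x u$; the comparison principle with the space-independent subsolution $\big(\inf_x u(\cdot,T^\epsilon+t_0)\big)e^{(a_{\inf}-b_{\sup}(u^\infty+\epsilon))(t-T^\epsilon)}$ plus boundedness of $u$ forces the exponent to be $\le 0$, hence $u^\infty\ge a_{\inf}/b_{\sup}$; the other half of \eqref{asymptotic-lower-bound} is symmetric. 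This avoids any maximum-attainment issue and is the argument you should write down.
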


\begin{rk}
\label{thm1-rk}
\begin{description}
\item[(i)] Theorem \ref{global-existence-tm} rules out finite-time blow-up phenomena when $\chi\mu\leq b_{inf}$, and guarantees that classical solutions are bounded whenever {\bf (H1)} holds. This results recovers Theorem 1.5 in \cite{SaSh1} when $a(x,t)$
and $b(x,t)$ are constant functions.
\item[(ii)]  Theorem \ref{global-existence-tm} (iii) provides a uniform upper bound independent of the initial time $t_0$, for $\|v(\cdot,t+t_0;t_0;u_0)\|_{C^{1,\nu}_{\rm unif}(\R^N)}$ for every $u_0\in C^{b}_{\rm unif}(\R^N)$ whenever {\bf (H1)} holds. This result will be useful to show that a sequence of solutions converges up to a subsequence in the open compact topology to a solution.

\end{description}
\end{rk}

 By Theorem \ref{global-existence-tm}, for any strictly positive $u_0\in C_{\rm unif}^b(\R^N)$,
 $\limsup_{t\to\infty}\sup_{x\in\R^N}u(x,t+t_0;t_0,u_0) $ has a positive lower bound and $\liminf_{t\to\infty}\inf_{x\in\R^N}u(x,t+t_0;t_0,u_0)$ has
 a positive upper bound.
 But it is not clear whether there is a positive lower bound (resp. a positive lower bound independent of $u_0$) for $\liminf_{t\to\infty}\inf_{x\in\R^N}u(x,t+t_0;t_0,u_0)$
 under hypothesis {\bf (H1)}, which is related to the persistence in \eqref{P}.

Persistence is an important concept in population models. Assume {\bf (H1)}.
We say {\it pointwise persistence} occurs in \eqref{P} if for any strictly positive $u_0\in C_{\rm unif}^b(\mathbb{R}^N)$,
there { exist positive real numbers} $m(u_0)>0$ { and $M(u_0)>0$} such that
 \begin{equation}
 \label{persistence-eq1-0}
 m(u_0)\le u(x,t+t_0;t_0,u_0)\le  M(u_0)\,\, \forall\,\, x\in\R^N,\,\, t_0\in\mathbb{R}\,\, {\rm and}\,\, t\ge 0.
 \end{equation}
 It is said that {\it uniform persistence} occurs in \eqref{P} if there are $0<m<M<\infty$ such that for any  $t_0\in\R$ and any strictly positive initial function $u_{0}\in C^{b}_{\rm unif}(\R^N)$, there exists $T(u_0)>0$ such that
 $$
 m\le u(x,t+t_0;t_0,u_0)\le M\quad \forall\,\, t\ge { T(u_0),\,\,\,x\in\R^N, \,\, \,\, t_0\in\R.}
 $$
 Note that uniform persistence implies pointwise persistence.

We have the following results on the pointwise persistence/uniform persistence of solutions of \eqref{P} with strictly positive initials.


\begin{tm}
\label{Main-thm1}
\begin{itemize}
\item[(i)] (Pointwsie persistence) Suppose that {\bf (H1)} holds. Then pointwise persistence occurs in \eqref{P}.

 \item[(ii)] (Uniform persistence) 
  Suppose that {\bf (H1)} holds.
 If, furthermore,  $$
{\bf (H2)}: \quad  b_{\inf}>(1+\frac{a_{\sup}}{a_{\inf}})\chi\mu
$$ holds,  then uniform persistence occurs in \eqref{P}. In particular, for every strictly positive initial $u_0\in C^b_{\rm unif}(\R^N)$
  and $\varepsilon>0$, there is $T_{\varepsilon}(u_0)>0$ such  that the unique classical solution $(u(x,t+t_0;t_0,u_0), v(x,t+t_0;t_0,u_0))$ of \eqref{P} with $u(\cdot,t_0;t_0,u_0)=u_0(\cdot)$ satisfies
\begin{equation}\label{attracting-rect-eq1}
\underline{M}-\varepsilon\leq u(x,t+t_0;t_0,u_0)\leq \overline{M}+\varepsilon, \quad \forall t\geq T_{\varepsilon}(u_0), \ x\in\R^N,\,\,\, t_{0}\in\R
\end{equation} and
\begin{equation}\label{attracting-rect-eq1'}
\frac{\mu\underline{M}}{\lambda}-\varepsilon\leq v(x,t+t_0;t_0,u_0)\leq \frac{\mu\overline{M}}{\lambda}+\varepsilon, \quad \forall t\geq T_{\varepsilon}(u_0), \ x\in\R^N,\,\,\, t_{0}\in\R,
\end{equation}
where
\begin{equation}\label{attracting-rect-eq2}
\underline{M}:=\frac{(b_{\inf}-\chi\mu)a_{\inf}-\chi\mu a_{\sup} }{(b_{\sup}-\chi\mu)(b_{\inf}-\chi\mu)-(\chi\mu)^2}>\frac{a_{\inf}-\frac{\chi\mu a_{\sup}}{b_{\inf}-\chi\mu} }{b_{\sup}-\chi\mu}
\end{equation}
and
\begin{equation}\label{attracting-rect-eq3}
\overline{M}:=\frac{(b_{\sup}-\chi\mu)a_{\sup}-\chi\mu a_{\inf} }{(b_{\sup}-\chi\mu)(b_{\inf}-\chi\mu)-(\chi\mu)^2}< \frac{a_{\sup}}{b_{\inf}-\chi\mu}.
\end{equation}
Furthermore, the set
\begin{equation} \label{Invariant set} \mathbb{I}_{inv}:=\{ u\in C^b_{\rm unif}(\R^N)\ |\ \underline{M}\leq u_{0}(x)\leq \overline{M}, \ \forall\, x\in\R^N\}
\end{equation} is a positively invariant set for solutions of \eqref{P} in the sense that for every $t_0\in\R$ and $u_0\in\mathbb{I}_{inv}$, we have that $u(\cdot,t+t_0;t_0,u_0)\in\mathbb{I}_{inv}$ for every $t\geq0$.
\end{itemize}
\end{tm}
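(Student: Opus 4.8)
The plan for both parts is to first eliminate $v$ from the first equation of \eqref{P}. Since $\nabla\cdot(u\nabla v)=\nabla u\cdot\nabla v+u\Delta v$ and $\Delta v=\lambda v-\mu u$, the function $u$ solves the reaction--advection--diffusion equation
\[
\partial_tu=\Delta u-\chi\nabla v\cdot\nabla u+u\big(a(x,t)-\chi\lambda v-(b(x,t)-\chi\mu)u\big),
\]
whose drift $-\chi\nabla v$ is bounded uniformly in $t_0\in\R$ and $t\ge 0$ by Theorem \ref{global-existence-tm}(iii), so that the parabolic comparison principle is available for bounded sub/supersolutions. I would also record the two elementary consequences of $v=\mu(\lambda-\Delta)^{-1}u$, a convolution against a nonnegative kernel $G_\lambda$ of total mass $1/\lambda$: namely $\tfrac{\mu}{\lambda}\inf_xu(x,t)\le v(x,t)\le\tfrac{\mu}{\lambda}\sup_xu(x,t)$ for every $t$. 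Combined with the equation these give $\chi\lambda v\le\chi\mu\sup_xu(\cdot,t)$ (used to push $u$ up from below) and $-\chi\lambda v\le-\chi\mu\inf_xu(\cdot,t)$ (used to push $u$ down from above).

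For (i) the upper bound is immediate from \eqref{u-upper-bound-eq2}: $u(x,t+t_0;t_0,u_0)\le M(u_0):=\max\{\|u_0\|_\infty,\tfrac{a_{\sup}}{b_{\inf}-\chi\mu}\}$. For the lower bound, on any fixed finite interval $[t_0,t_0+T]$ I would compare $u$ from below with the solution of the logistic ODE $\dot q=q(a_{\inf}-\chi\mu M(u_0)-(b_{\sup}-\chi\mu)q)$, $q(t_0)=\inf_xu_0>0$; since the solution of this ODE never vanishes, $u\ge q>0$ on $[t_0,t_0+T]$ with a positive bound depending only on $u_0$ and $T$. The substance of (i) is the long-time bound. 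First, Theorem \ref{global-existence-tm}(ii) and the convergence rate of the logistic ODE give, uniformly in $t_0$, that $\sup_xu(x,t+t_0;t_0,u_0)\le\tfrac{a_{\sup}}{b_{\inf}-\chi\mu}+\varepsilon$ for $t\ge T_1(u_0,\varepsilon)$. Then I would control $h(t):=\inf_xu(x,t)$ by evaluating the equation at a (near-)infimum point $x_*(t)$ — where $\nabla u=0$, $\Delta u\ge0$ — which yields a differential inequality $h'(t)\ge h(t)\big(a_{\inf}-\chi\lambda v(x_*(t),t)-(b_{\sup}-\chi\mu)h(t)\big)$ (made rigorous via approximate minima and Dini derivatives). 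The point is that $v$ at $x_*$ is much better than $\tfrac{\mu}{\lambda}\sup_xu$: writing $v(x_*,t)-\tfrac{\mu}{\lambda}h(t)=\mu\int G_\lambda(x_*-y)(u(y,t)-h(t))\,dy$ and splitting at radius $\delta$, the near part is $O(\delta^{1+\nu})$ by the uniform $C^{1,\nu}$-bound on $u$ (parabolic regularity, using $\nabla u(x_*)=0$) and the far part is $\le\mu\,\sup_xu(\cdot,t)\int_{|z|>\delta}G_\lambda$, which is small by the exponential decay of $G_\lambda$; optimizing $\delta$ makes $\chi\lambda v(x_*,t)$ close to $\chi\mu h(t)$, so $h'(t)\ge h(t)\cdot(\text{positive})$ whenever $h(t)$ is small, ruling out $\liminf_{t\to\infty}h(t+t_0)=0$. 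I expect the main obstacle to be exactly this quantitative balance — forcing the ``local overshoot'' of $v$ over $\tfrac{\mu}{\lambda}h$ strictly below $a_{\inf}$ using only \textbf{(H1)}. If the estimate is not sharp enough on its own it can be supplemented by a compactness argument: from a hypothetical sequence with $u(x_n,t_n+t_{0,n};t_{0,n},u_0)\to0$ (necessarily $t_n\to\infty$, by the finite-time bound), translate and pass to a limit entire solution of a limiting system still satisfying \textbf{(H)}; it is nonnegative and vanishes at a point, hence $\equiv0$ by the strong maximum principle, and this contradicts \eqref{asymptotic-lower-bound} together with the fact that on any long window where $\sup_xu$ stays below $\tfrac{a_{\inf}}{b_{\sup}}$ the reaction is uniformly positive and $\inf_xu$ grows exponentially.

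For (ii), under \textbf{(H1)}$+$\textbf{(H2)} the sub/supersolution comparisons can be iterated and they converge, to the claimed constants. Starting from the logistic supersolution $\dot p_0=p_0(a_{\sup}-(b_{\inf}-\chi\mu)p_0)$ gives $\limsup_{t\to\infty}\sup_xu\le\overline M_0:=\tfrac{a_{\sup}}{b_{\inf}-\chi\mu}$; since \textbf{(H2)} says precisely $a_{\inf}>\chi\mu\overline M_0$, the lower logistic comparison $\dot q_1=q_1(a_{\inf}-\chi\mu(\overline M_0+\varepsilon)-(b_{\sup}-\chi\mu)q_1)$ (valid for $t$ large since $\chi\lambda v\le\chi\mu\sup_xu$) gives $\liminf_{t\to\infty}\inf_xu\ge\underline M_1>0$. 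Iterating — alternately feeding the current lower bound into the upper logistic comparison through $-\chi\lambda v\le-\chi\mu\inf_xu$, and the current upper bound into the lower one — produces a decreasing sequence $\overline M_{2k}=\tfrac{a_{\sup}-\chi\mu\underline M_{2k-1}}{b_{\inf}-\chi\mu}$ and an increasing sequence $\underline M_{2k+1}=\tfrac{a_{\inf}-\chi\mu\overline M_{2k}}{b_{\sup}-\chi\mu}$, which remain in the valid regime ($a_{\inf}-\chi\mu\overline M_{2k}>0$ since $\overline M_{2k}\le\overline M_0$ and \textbf{(H2)}; $a_{\sup}-\chi\mu\underline M_{2k+1}>0$ since $\underline M_{2k+1}<\overline M_0$ and \textbf{(H2)} forces $b_{\inf}>2\chi\mu$), and converge to the unique solution of $a_{\sup}=\chi\mu\underline M+(b_{\inf}-\chi\mu)\overline M$, $a_{\inf}=\chi\mu\overline M+(b_{\sup}-\chi\mu)\underline M$ — the determinant $(b_{\sup}-\chi\mu)(b_{\inf}-\chi\mu)-(\chi\mu)^2$ being positive under \textbf{(H2)} — which is exactly \eqref{attracting-rect-eq2}--\eqref{attracting-rect-eq3}. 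Hence $\limsup_{t\to\infty}\sup_xu\le\overline M$ and $\liminf_{t\to\infty}\inf_xu\ge\underline M$, which gives \eqref{attracting-rect-eq1}, and \eqref{attracting-rect-eq1'} follows by applying the elliptic bounds to $v$; in particular uniform persistence holds with $m=\underline M$, $M=\overline M$. The difficulty in this part is essentially bookkeeping — keeping the error $\varepsilon$ and the times $T_k(u_0)$ under control along the iteration and checking that each logistic comparison keeps a positive stable equilibrium.

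Finally, the invariance of $\mathbb I_{inv}$ drops out of the two identities satisfied by $(\overline M,\underline M)$: they say exactly that the constant $\underline M$ is a subsolution and the constant $\overline M$ a supersolution of the $u$-equation so long as $\underline M\le u\le\overline M$ (e.g. if $\sup_xu\le\overline M$ then $a(x,t)-\chi\lambda v-(b(x,t)-\chi\mu)\underline M\ge a_{\inf}-\chi\mu\overline M-(b_{\sup}-\chi\mu)\underline M=0$, and symmetrically for $\overline M$ when $\inf_xu\ge\underline M$). A single application of the comparison principle with the two bounds propagated simultaneously then shows $u(\cdot,t+t_0;t_0,u_0)\in\mathbb I_{inv}$ for all $t\ge0$ whenever $u_0\in\mathbb I_{inv}$.
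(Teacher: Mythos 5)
Your part (ii) is essentially the paper's argument: the paper sets up exactly your iterated logistic sub/supersolutions through Lemma \ref{Main-lem2'}, verifies the monotone convergence of $(\underline M_n,\overline M_n)$ to $(\underline M,\overline M)$, and then feeds each bound into the next comparison using the pointwise lower bound from part (i) to seed the induction; the invariance of $\mathbb I_{inv}$ is also obtained by the same two constant sub/supersolutions. The only thing to note is that the iteration needs a strictly positive starting point, which is precisely why (i) is proved first.

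Part (i), however, is where your proposal diverges from the paper and where it has a genuine gap. Your infimum-point estimate $v(x_*,t)-\tfrac{\mu}{\lambda}h(t)=\mu\int G_\lambda(x_*-y)\big(u(y,t)-h(t)\big)\,dy$ is correct in spirit, but the split at radius $\delta$ does not close: the near part is $O(\delta^{1+\nu}/\lambda)$, the far part is $O\big(\sup u\cdot\int_{|z|>\delta}G_\lambda\big)$, and since $\sup u$ can be of order $a_{\sup}/(b_{\inf}-\chi\mu)$ you need $\delta$ large (at least of order $\lambda^{-1/2}\log$) to tame the tail; at that scale the near part $\chi\lambda\cdot C\delta^{1+\nu}/\lambda$ is a fixed constant, not small, and there is no reason it should fall below $a_{\inf}$ under \textbf{(H1)} alone. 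Your compactness fallback also does not deliver a contradiction: if $u(x_n,t_n+t_{0,n})\to0$ while $\sup_xu$ stays bounded away from $0$ at points far from $x_n$, the translates $u(\cdot+x_n,\cdot+t_n+t_{0,n})$ do converge locally to an entire nonnegative solution vanishing at a point, and the strong maximum principle then gives $\tilde u\equiv0$ — but this is perfectly consistent with the scenario you are trying to exclude, because \eqref{asymptotic-lower-bound} only controls $\limsup_t\sup_xu$, which can be attained at spatial locations drifting away from $x_n$. In other words, ``$\tilde u\equiv 0$'' is not absurd; it is exactly what happens when the population survives but escapes to infinity relative to $x_n$.

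What the paper actually does is prove Lemma \ref{main-new-lm2}: if $\delta\le u_0\le M^+$ with $\delta$ small, then $\delta\le u(\cdot,t_0+T;t_0,u_0)\le M^+$, and then iterates this at times $nT_1$ and fills the gaps with Lemma \ref{main-lem1}. The crux of Lemma \ref{main-new-lm2} is your missing ingredient: Lemma \ref{main-new-lm1} shows that smallness of $u_0$ on a large box $D_{3L}$ propagates to smallness of $u$ on $D_{2L}$ for $t\le T$ (via a fundamental-solution/Gaussian estimate with the known drift bound), and this in turn makes $\chi\lambda v$ and $\chi|\nabla v|$ small on $D_L$ because the $v$-kernel's contribution from outside $D_{2L}$ is exponentially small in $L$; with $v$ tamed locally, the Dirichlet principal-eigenvalue/subsolution construction of Steps 1--2 forces exponential growth of the local infimum. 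The contradiction argument (Step 4) then splits on the measure of the set where the translated initial data exceeds $\delta_0/2$: if it is small (your ``escaping mass'' scenario), one approximates in $L^p$ by data that is small on the whole box and uses the local $v$-smallness to get growth, while if it has positive measure the heat semigroup gives a uniform lower bound and the translated limit cannot vanish. Your sketch omits both the local $v$-control and the measure-theoretic dichotomy, and without them the argument does not rule out $\liminf_t\inf_x u=0$ under \textbf{(H1)}.
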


\begin{rk}
\begin{itemize}
\item[(1)]  Assume {\bf (H1)}. By Theorem \ref{global-existence-tm} (ii), $m(u_0)\leq \frac{a_{\sup}}{b_{\inf}} $ for every $u_0\in C^b_{\rm unif}(\R^N)$ with $u_{0\inf}>0$. It remains open whether uniform persistence occurs under {\bf (H1)}. It should be noted that uniform pointwise occurs under {\bf (H1)} when \eqref{P} is studied on bounded domains with Neumann boundary conditions (see \cite[Remark 1.2]{Issa-Shen}).


\item[(2)] The proof of Theorem \ref{Main-thm1} (i) is highly nontrivial and is based on a key and  fundamental result proved in Lemma \ref{main-new-lm2}.  Roughly speaking, Lemma \ref{main-new-lm2} shows that for any given time $T>0$, the concentration $u(x,t;t_0,u_0)$ of the mobile species at time $t_0+T$  can be controlled
     below by $u_{0\inf}$  provided that $u_{0\inf}$ is sufficiently small. This result will also play a  crucial role in  the study of existence of strictly positive entire solutions in the second part of the series.

\item[(3)] When the functions $a(x,t)$ and $b(x,t)$ are both space and time homogeneous, $\underline{M}=\overline{M}=\frac{a}{b}$, where $\underline{M}$ and $\overline{M}$ are as in Theorem \ref{Main-thm1} (ii). Hence we recover \cite[Theorem 1.8]{SaSh1}. 

\item[(4)] Assume {\bf (H1)}. By Theorems \ref{global-existence-tm} and \ref{Main-thm1}, for any $t_0\in\R$ and strictly positive $u_0\in C_{\rm unif}^b(\R^N)$,
$$
0<\liminf_{t\to\infty}\inf_{x\in\R^N}u(x,t+t_0;t_0,u_0)\le\limsup_{t\to\infty}\sup_{x\in\R^N}u(x,t+t_0;t_0,u_0)<\infty.
$$
Naturally, it is important to know whether there is a {\it strictly positive entire solution}, that is,
an entire solution $(u^+(x,t),v^+(x,t))$ of \eqref{P} with $\inf_{t\in\R,x\in\R^N}u^+(x,t)>0$.
It is also important to know the stability of strictly  entire positive solutions of \eqref{P} (if exist) and to investigate the asymptotic behavior
of globally defined classical solutions with strictly positive initial functions.
These problems will be studied in the second part of the series.

\end{itemize}
\end{rk}

 By Theorem \ref{Main-thm1}, for any strictly positive $u_0\in C_{\rm unif}^b(\R^N)$,
there is a positive lower bound (resp. a positive lower bound independent of $u_0$) for $\liminf_{t\to\infty}\inf_{x\in\R^N}u(x,t+t_0;t_0,u_0)$  provided (H1)  (resp. (H2)) holds. It is clear that for any nonnegative $u_0\in C_{\rm inf}^b(\R^N)$ with nonempty and compact support,
$\inf_{x\in\R^N} u(x,t+t_0;t_0,u_0)=0$ for any $t\ge 0$ and hence $\liminf_{t\to\infty}\inf_{x\in\R^N}u(x,t+t_0;t_0,u_0)=0$. It is important to know whether $\limsup_{t\to\infty}\sup_{x\in\R^N}u(x,t+t_0;t_0,u_0)$ has a positive lower bound;  whether there is a positive number $m(u_0)>0$ such that
the set $I(t):=\{x\in\R^N\,|\, u(x,t+t_0;t_0,u_0)\ge m(u_0)\}$ spreads into the whole space $\R^N$ as $t\to\infty$; and if so, how fast the set $I(t)$ spreads into the whole space $\R^N$, which is related to the asymptotic spreading property in \eqref{P}.

 We have the following two theorems on the asymptotic behavior or  spreading properties of the solutions of \eqref{P} with compactly supported and front-like initial functions, respectively.

\begin{tm}[Asymptotic spreading]\label{spreading-properties}
\begin{itemize}
\item[(1)] Suppose that {\bf (H1)} holds. Then for every $t_0\in \R$ and every nonnegative initial function $u_0\in C^b_{\rm unif}(\R^N)$ with nonempty compact support $supp(u_0)$, we have that
\begin{equation*}
\lim_{t\to\infty}\sup_{|x|\geq ct}u(x,t+t_0;t_0,u_0)=0, \quad \forall c> c_{+}^{*}(a,b,\chi,\lambda,\mu),
\end{equation*}
where
\begin{equation}
\label{c-positive-star}
c_{+}^{*}(a,b,\chi,\lambda,\mu):=2\sqrt{a_{\sup}}+ \frac{\chi\mu\sqrt{N}a_{\sup}}{2(b_{\inf}-\chi\mu)\sqrt{\lambda}}.
\end{equation}

\item[(2)]
Suppose that
\begin{equation*}
{\bf (H3)}: \ b_{\inf}>\left(1+\frac{\Big(1+\sqrt{1+\frac{Na_{\inf}}{4\lambda}}\Big)a_{\sup}}{2a_{\inf}}\right)\chi\mu.
\end{equation*}
Then for every $t_0\in\R$ and every nonnegative initial function $u_0\in C^b_{\rm unif}(\R^N)$ with nonempty support $supp(u_0)$, we have that
\begin{equation*}\label{lower-bound-spreading-speed}
\liminf_{t\to\infty}\inf_{|x|\leq ct}u(x,t+t_0;t_0,u_0)>0, \quad \forall\,  0< c< c_{-}^{*}(a,b,\chi,\lambda,\mu),
\end{equation*}
where
\begin{equation}
\label{c-negative-star}
c_{-}^*(a,b,\chi,\lambda,\mu):=2\sqrt{a_{\inf}-\frac{\chi\mu a_{\sup}}{b_{\inf}-\chi\mu}}-\chi\frac{\mu\sqrt{N}a_{\sup}}{2\sqrt{\lambda}(b_{\inf}-\chi\mu)}.
\end{equation}
\end{itemize}
\end{tm}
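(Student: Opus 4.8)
The plan is to eliminate the divergence form of the chemotactic term and sandwich $u$ between solutions of scalar KPP equations carrying a first–order term of controlled size. Substituting $\Delta v=\lambda v-\mu u$ into $-\chi\nabla\cdot(u\nabla v)=-\chi\nabla v\cdot\nabla u-\chi u\,\Delta v$ rewrites the first equation of \eqref{P} as
\begin{equation*}
\partial_tu=\Delta u-\chi\nabla v\cdot\nabla u+u\big(a(x,t)-\chi\lambda v-(b(x,t)-\chi\mu)u\big).
\end{equation*}
Since $u\ge0$ and $v\ge0$ (maximum principle in the second equation), this gives $\partial_tu\le\Delta u-\chi\nabla v\cdot\nabla u+u(a_{\sup}-(b_{\inf}-\chi\mu)u)$ and, using also $\|v(\cdot,t)\|_\infty\le\tfrac{\mu}{\lambda}\|u(\cdot,t)\|_\infty$, also $\partial_tu\ge\Delta u-\chi\nabla v\cdot\nabla u+u\big(a_{\inf}-\chi\mu\|u(\cdot,t)\|_\infty-(b_{\sup}-\chi\mu)u\big)$. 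The drift is controlled by the elliptic gradient estimate $\|\nabla v(\cdot,t)\|_\infty\le\tfrac{\mu\sqrt N}{2\sqrt\lambda}\|u(\cdot,t)\|_\infty$, which I would get componentwise from $v=\mu(\lambda-\Delta)^{-1}u$ using $0\le u\le\|u\|_\infty$ and the oddness of the components of $\nabla$ of the radial Bessel kernel; combining it with $\limsup_{t\to\infty}\|u(\cdot,t+t_0;t_0,u_0)\|_\infty\le\tfrac{a_{\sup}}{b_{\inf}-\chi\mu}$ from Theorem~\ref{global-existence-tm}, for each $\varepsilon>0$ there is $T_\varepsilon>0$, independent of $t_0$, such that for $t\ge T_\varepsilon$ one has $\|u(\cdot,t)\|_\infty\le m_\varepsilon:=\tfrac{a_{\sup}}{b_{\inf}-\chi\mu}+\varepsilon$ and $|\chi\nabla v(\cdot,t)|\le V_\varepsilon:=\tfrac{\chi\mu\sqrt N}{2\sqrt\lambda}m_\varepsilon$.

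\textbf{Part (1).} Given $c_0>c_+^*$, pick $\varepsilon$ small so that $c:=2\sqrt{a_{\sup}}+V_\varepsilon<c_0$, and compare $u$, for $t\ge T_\varepsilon$, with the radial supersolution $\bar u(x,t):=\min\{\,m_\varepsilon,\ m_\varepsilon\,e^{-\sqrt{a_{\sup}}\,(|x|-R_\varepsilon-c(t-T_\varepsilon))}\,\}$ of $\partial_tw=\Delta w-\chi\nabla v\cdot\nabla w+w(a_{\sup}-(b_{\inf}-\chi\mu)w)$. On the exponential branch the supersolution inequality reduces, after dropping the favorable terms $-\sqrt{a_{\sup}}\tfrac{N-1}{|x|}\bar u\le0$ and $(b_{\inf}-\chi\mu)\bar u^2\ge0$, to $\sqrt{a_{\sup}}\,(c-V_\varepsilon)\ge 2a_{\sup}$, i.e.\ $c\ge 2\sqrt{a_{\sup}}+V_\varepsilon$; on the flat branch $m_\varepsilon$ is a strict supersolution of the logistic reaction; and the corner is harmless because a minimum of supersolutions is a supersolution. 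To initialize the comparison at $t=T_\varepsilon$ I would use Theorem~\ref{global-existence-tm}(iii) to bound the drift on $[t_0,T_\varepsilon]$ and then the Gaussian bound for the fundamental solution of $\partial_t-\Delta+\chi\nabla v\cdot\nabla$ to obtain $u(x,T_\varepsilon;t_0,u_0)\le Ce^{-\delta|x|^2}$ for $|x|$ large, so that $R_\varepsilon$ large enough yields $u(\cdot,T_\varepsilon;t_0,u_0)\le\bar u(\cdot,T_\varepsilon)$ on $\R^N$. The comparison principle then gives $u\le\bar u$ for $t\ge T_\varepsilon$, and for $|x|\ge c_0t$ with $t$ large the point lies on the exponential branch at distance $\ge(c_0-c)t-C$ ahead of the front, whence $\sup_{|x|\ge c_0t}u(x,t+t_0;t_0,u_0)\to0$.

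\textbf{Part (2).} A direct algebraic manipulation shows that \textbf{(H3)} is equivalent to the positivity of $c_-^*$ and implies \textbf{(H2)} (hence \textbf{(H1)}), so Theorems~\ref{global-existence-tm} and \ref{Main-thm1} apply and $r_0:=a_{\inf}-\tfrac{\chi\mu a_{\sup}}{b_{\inf}-\chi\mu}>0$. Given $0<c<c_-^*$, pick $\varepsilon$ small with $c<2\sqrt{r_\varepsilon}-V_\varepsilon$, where $r_\varepsilon:=a_{\inf}-\chi\mu\,m_\varepsilon>0$ (possible since $2\sqrt{r_\varepsilon}-V_\varepsilon\to c_-^*$ as $\varepsilon\to0$); then for $t\ge T_\varepsilon$ the function $u=u(\cdot,\cdot;t_0,u_0)$ is a supersolution of $\partial_tw=\Delta w-\chi\nabla v\cdot\nabla w+w(r_\varepsilon-\beta w)$, $\beta:=b_{\sup}-\chi\mu$, with drift $\le V_\varepsilon$, and (enlarging $T_\varepsilon$ past $t_0$) the strong maximum principle makes $u(\cdot,T_\varepsilon;t_0,u_0)$ positive, hence bounded below on every fixed ball. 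I would combine two ingredients for this equation: \emph{(A) local persistence on large balls} — for $R$ large the time-independent $W(x)=\min\{\Phi(\mathrm{dist}(x,\partial B(0,R))),\delta\}$, with $\Phi$ rising from $0$ like $e^{\mu d}-1$ ($\mu>V_\varepsilon$) and then flat at level $\tfrac{r_\varepsilon}{2\beta}$, is a subsolution vanishing on $\partial B(0,R)$ (the worst-case drift $-V_\varepsilon|\nabla W|$ and the curvature $-\tfrac{N-1}{|x|}\partial_rW$ being absorbed because $V_\varepsilon<2\sqrt{r_\varepsilon}$ and $R$ is large), so that if $u\ge\delta$ on $B(x_0,R)$ at some time $s\ge T_\varepsilon$ then $u\ge\delta$ on $B(x_0,R/2)$ for all $t\ge s$; and \emph{(B) an expanding pulse} $\underline u(x,t)=\kappa\,\psi(|x|-c'(t-T_\varepsilon))$, $\kappa$ small, $\psi\ge0$ supported on a fixed $\zeta$-interval, positive on it with a plateau of width $>2R$ and vanishing at the endpoints, which carries the lower bound outward at speed $c'$ for any $c'<2\sqrt{r_\varepsilon}-V_\varepsilon$. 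With $c<c'<2\sqrt{r_\varepsilon}-V_\varepsilon$, as the plateau of $\underline u$ sweeps past $x_1$ it forces $u\ge\kappa\psi_*\ge\delta$ on $B(x_1,R)$ over a time window, and (A) then pins $u\ge\delta$ near $x_1$ thereafter; since the plateau reaches $|x_1|$ by time $\approx T_\varepsilon+|x_1|/c'$, this yields $\liminf_{t\to\infty}\inf_{|x|\le ct}u(x,t+t_0;t_0,u_0)\ge\delta>0$.

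Constructing the pulse $\psi$ in (B) is the main obstacle, and the place where the sharp speed — hence the precise form of $c_-^*$ and the hypothesis \textbf{(H3)} — is forced. I would take $\psi$ solving, on its decaying (leading) part, the linear ODE $\psi''+(c'+V_\varepsilon)\psi'+(r_\varepsilon-\theta)\psi=0$, which admits positive oscillatory solutions precisely because $(c'+V_\varepsilon)^2<4r_\varepsilon$, and a convex profile $\propto e^{\alpha\zeta}-1$ (with $\alpha>V_\varepsilon-c'$) on its rising (trailing) part, matched $C^1$ near $\tfrac12\max\psi$. Verifying $\partial_t\underline u\le\Delta\underline u-\chi\nabla v\cdot\nabla\underline u+\underline u(r_\varepsilon-\beta\underline u)$ then comes down to: (i) the curvature term $\tfrac{N-1}{|x|}\partial_r\underline u$ is $O(1/|x|)$, hence negligible once the pulse has travelled far from the origin, i.e.\ for $t$ large; (ii) the drift contributes $\ge-V_\varepsilon|\partial_r\underline u|$, which on the falling part combines with $c'\psi'$ to reproduce the ODE above and leaves the cushion $\theta\psi>0$, and on the rising part is dominated because $(c'-V_\varepsilon)\psi'$ and $r_\varepsilon\psi$ have the favorable sign for $\alpha$ large; (iii) the quadratic $\beta\underline u^2$ is $o(\underline u)$ near the edges and $\le\beta\kappa\,\underline u$ in the bulk, hence negligible for $\kappa$ small; and (iv) the only remaining corners are the convex ones at $\partial(\mathrm{supp}\,\underline u)$, at which no smooth function touches $\underline u$ from above, so that $\underline u$ is a viscosity (hence comparison) subsolution. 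The same bounded-drift bookkeeping underlies the subsolution $W$ of ingredient (A). The delicate point throughout is making the single bound $|\chi\nabla v|\le V_\varepsilon$ cooperate with profiles that must simultaneously look like a sub-critical front on one side and be harmless on the other.
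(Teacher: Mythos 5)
Your Part~(1) is, up to the choice of supersolution, the paper's own argument: the paper compares $u$ with a \emph{family} of planar exponentials $Ce^{-\sqrt{a_{\sup}}(x\cdot\xi-(2\sqrt{a_{\sup}}+\chi K_{\varepsilon})t)}$, one for each $\xi\in S^{N-1}$, first on $[0,T_\varepsilon]$ using the finite-time drift bound $K_\varepsilon$ and then restarting at $t=T_\varepsilon$ with $L_\varepsilon$, and recovers the radial statement by taking the infimum over $\xi$; your single radial truncated exponential does the same job (the corner is a $\min$ of \emph{super}solutions, hence harmless, and the $(N-1)/|x|$ term has the favorable sign), though your initialization via a Gaussian fundamental-solution bound is heavier machinery than the paper's direct planar comparison from $t=0$, which exploits only that $u_0$ has compact support.

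Part~(2) is a genuinely different route, and the place to focus. After the identical reduction to a lower comparison with $w_t=\Delta w-\chi\nabla v\cdot\nabla w+w(r_\varepsilon-\beta w)$, $|\chi\nabla v|\le V_\varepsilon$, the paper simply invokes Lemma~\ref{spreading-lm2}, i.e.\ the Berestycki--Hamel--Nadin spreading theorem for KPP equations with a bounded drift; the hypothesis $\liminf_{|x|\to\infty}\inf_{t\ge0}\big(4a_0-|q_0|^2\big)>0$ is precisely what the paper verifies in \eqref{cc-eq00}, which you correctly observe is equivalent to \textbf{(H3)} and to $c_-^*>0$. You instead propose to reprove that theorem from scratch by an Aronson--Weinberger scheme (a time-independent ball subsolution plus an expanding radial pulse), and you do isolate the right speed thresholds: $\mu>V_\varepsilon$ for the ball and $c'+V_\varepsilon<2\sqrt{r_\varepsilon}$ for the pulse. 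However, two of the sketched constructions do not close as written. In (A), $\min\{\Phi(\mathrm{dist}(x,\partial B)),\delta\}$ is \emph{not} automatically a viscosity (or distributional) subsolution: a $\min$ of subsolutions produces concave corners which smooth test functions do touch from above, so the subsolution inequality fails there; it is a $\max$ of subsolutions that is automatically a subsolution, and your harmless-corner remark in Part~(1) used exactly the opposite rule ($\min$ of supersolutions). You need a genuinely smooth $\Phi$ that levels off, with the reaction term $W(r_\varepsilon-\beta W)$ absorbing the negative $\Phi''$ in the transition region. In (B), you acknowledge that $(N-1)/|x|$ spoils the pulse inequality near the origin and defer to ``$t$ large,'' but at $t=T_\varepsilon$ the pulse \emph{is} near the origin; you must launch it on an annulus far from the origin, which first requires a lower bound for $u(\cdot,T_\varepsilon+t_0;t_0,u_0)$ on that annulus (available via the strong maximum principle and compactness, with $\kappa$ then chosen accordingly), and you should check that this bookkeeping is not circular with (A). Finally, the $C^1$-matching of the oscillatory leading piece to the rising exponential, together with the existence of a region of width $>2R$ on which $\psi\ge\psi_*$, needs to be exhibited rather than asserted. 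None of this is unfixable, but taken together it amounts to a nontrivial self-contained reproof of a published result, which the paper deliberately avoids by citing Lemma~\ref{spreading-lm2}; if you insist on an independent argument, plan to devote a full section to it.
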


\medskip

\begin{tm}[Asymptotic spreading]\label{spreading-properties-1}
\begin{itemize}
\item[(1)] Suppose that {\bf (H1)} holds. Then for every $\xi\in S^{N-1}$, every $t_0\in \R$, and every nonnegative front-like initial function $u_0\in C^b_{\rm unif}(\R^N)$ in the direction of $\xi$,   we have that
\begin{equation*}
\lim_{t\to\infty}\sup_{x\cdot \xi\geq ct}u(x,t+t_0;t_0,u_0)=0, \quad \forall c> c_{+}^{*}(a,b,\chi,\lambda,\mu),
\end{equation*}
where $c_{+}^{*}(a,b,\chi,\lambda,\mu)$ is as in \eqref{c-positive-star}.

\item[(2)]
Suppose that {\bf (H3)} holds.
Then for every $\xi\in S^{N-1}$,  every $t_0\in\R$, and every nonnegative  front-like initial function $u_0\in C^b_{\rm unif}(\R^N)$ in the direction of $\xi$, we have that
\begin{equation*}\label{lower-bound-spreading-speed}
\liminf_{t\to\infty}\inf_{x\cdot\xi\leq ct}u(x,t+t_0;t_0,u_0)>0, \quad \forall\, 0< c< c_{-}^{*}(a,b,\chi,\lambda,\mu),
\end{equation*}
where $c_{-}^*(a,b,\chi,\lambda,\mu)$ is as in \eqref{c-negative-star}.
\end{itemize}
\end{tm}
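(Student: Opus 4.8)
The plan is to obtain both statements from the comparison principle, after reducing \eqref{P} to scalar parabolic inequalities for $u$. Using $\Delta v=\lambda v-\mu u$ in the first equation of \eqref{P} gives
$$\partial_t u=\Delta u-\chi\nabla v\cdot\nabla u+u\big(a(x,t)-\chi\lambda v-(b(x,t)-\chi\mu)u\big),$$
while the elliptic equation yields $0\le v(\cdot,t)\le\tfrac{\mu}{\lambda}\|u(\cdot,t)\|_\infty$ and $\|\nabla v(\cdot,t)\|_\infty\le\tfrac{\mu\sqrt N}{2\sqrt\lambda}\|u(\cdot,t)\|_\infty$. By Theorem \ref{global-existence-tm}, under {\bf (H1)} one has $u\le\max\{\|u_0\|_\infty,\tfrac{a_{\sup}}{b_{\inf}-\chi\mu}\}$ on $t\ge t_0$ and, for each $\varepsilon>0$, a time $T_\varepsilon(u_0)$ with $\|u(\cdot,t)\|_\infty\le\tfrac{a_{\sup}}{b_{\inf}-\chi\mu}+\varepsilon$ for $t\ge t_0+T_\varepsilon(u_0)$. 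Hence, for $t\ge t_0+T_\varepsilon(u_0)$, $u$ is a subsolution of $\partial_t w=\Delta w-\chi\nabla v\cdot\nabla w+w(a_{\sup}-(b_{\inf}-\chi\mu)w)$ and, under {\bf (H3)} (which forces {\bf (H2)}, hence $r_0:=a_{\inf}-\tfrac{\chi\mu a_{\sup}}{b_{\inf}-\chi\mu}>0$), a supersolution of the Fisher-type equation $\partial_t w=\Delta w-\chi\nabla v\cdot\nabla w+w(r_\varepsilon-(b_{\sup}-\chi\mu)w)$ with $r_\varepsilon:=a_{\inf}-\tfrac{\chi\mu a_{\sup}}{b_{\inf}-\chi\mu}-\chi\mu\varepsilon$; in both equations the drift obeys $\|\chi\nabla v(\cdot,t)\|_\infty\le M_\varepsilon:=\tfrac{\chi\mu\sqrt N}{2\sqrt\lambda}(\tfrac{a_{\sup}}{b_{\inf}-\chi\mu}+\varepsilon)$.

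For part (1) I would repeat the proof of Theorem \ref{spreading-properties}(1), replacing the exterior of a ball by the half-space $\{x\cdot\xi\ge ct\}$. Given $c>c_+^*$, fix $\varepsilon>0$ with $c>2\sqrt{a_{\sup}}+M_\varepsilon$. A direct computation shows that, for every $A>0$ and $\eta\in\R$,
$$\overline w(x,t):=\min\Big\{\tfrac{a_{\sup}}{b_{\inf}-\chi\mu}+\varepsilon,\ A\,e^{-\sqrt{a_{\sup}}\,\big(x\cdot\xi-\eta-(2\sqrt{a_{\sup}}+M_\varepsilon)(t-t_0-T_\varepsilon(u_0))\big)}\Big\}$$
is a supersolution of the first scalar equation on $t\ge t_0+T_\varepsilon(u_0)$: the exponential branch works because $\gamma+a_{\sup}/\gamma$ is minimised at $\gamma=\sqrt{a_{\sup}}$ and the extra $M_\varepsilon$ absorbs the worst case of $-\chi\nabla v\cdot\nabla\overline w$, while the constant branch dominates the logistic reaction since $\tfrac{a_{\sup}}{b_{\inf}-\chi\mu}+\varepsilon>\tfrac{a_{\sup}}{b_{\inf}-\chi\mu}$. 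Because $u_0$ is front-like, $u_0\equiv0$ for $x\cdot\xi\gg1$, so a cruder exponential supersolution valid already on $[t_0,t_0+T_\varepsilon(u_0)]$ (with the larger constant and drift bound coming from $\max\{\|u_0\|_\infty,\tfrac{a_{\sup}}{b_{\inf}-\chi\mu}\}$) shows that $u(\cdot,t_0+T_\varepsilon(u_0);t_0,u_0)$ decays exponentially in $x\cdot\xi$; choosing $A,\eta$ so that $\overline w(\cdot,t_0+T_\varepsilon(u_0))$ lies above it, the comparison principle gives $u\le\overline w$ thereafter, whence $\sup_{x\cdot\xi\ge ct}u(x,t+t_0;t_0,u_0)\to0$.

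For part (2), one first checks that {\bf (H3)} is exactly the positivity of $c_-^*=2\sqrt{r_0}-\tfrac{\chi\mu\sqrt N a_{\sup}}{2\sqrt\lambda(b_{\inf}-\chi\mu)}$. Fix $0<c<c_-^*$, then $\varepsilon>0$ small with $c<2\sqrt{r_\varepsilon}-M_\varepsilon$, then $c'\in(c,2\sqrt{r_\varepsilon}-M_\varepsilon)$, and finally $\rho\in\big((c'+M_\varepsilon)^2/4,\ r_\varepsilon\big)$ (non-empty since $c'+M_\varepsilon<2\sqrt{r_\varepsilon}$). Two ingredients are needed. \emph{Reservoir and ramp-up:} since $u_0$ is front-like, $u_0\ge\delta_0>0$ on a half-space $\{x\cdot\xi\le-R_0\}$; using that $u$ is a supersolution of $\partial_t w=\Delta w-\chi\nabla v\cdot\nabla w-Kw$ on $[t_0,\infty)$ ($K$ absorbing the reaction while $u$ is bounded) and of the Fisher-type equation with $\rho$ in place of $r_\varepsilon$ on $[t_0+T_\varepsilon(u_0),\infty)$ (which it super-solves, as $\rho<r_\varepsilon$), one compares $u$ from below with solutions of the Dirichlet problem for $\partial_t w=\Delta w-\chi\nabla v\cdot\nabla w+w(\rho-(b_{\sup}-\chi\mu)w)$ on large balls sitting inside the reservoir — which admit positive steady subsolutions uniformly over drifts bounded by $M_\varepsilon$, since the principal Dirichlet eigenvalue of a large ball is $<\rho$, cf.\ the bounded-domain persistence theory \cite{Issa-Shen} — to obtain $L_\varepsilon>0$ and $s_1>0$ with $u(x,t+t_0;t_0,u_0)\ge\tfrac{\rho}{b_{\sup}-\chi\mu}$ for $x\cdot\xi\le-L_\varepsilon$ and $t\ge s_1$. \emph{Moving front subsolution:} let $\phi$ be the monotone decreasing solution of $\phi''+(c'+M_\varepsilon)\phi'+\phi(\rho-(b_{\sup}-\chi\mu)\phi)=0$ with $\phi(-\infty)=\tfrac{\rho}{b_{\sup}-\chi\mu}$, truncated to $0$ at its first zero; since $c'+M_\varepsilon<2\sqrt\rho$ this exists, is a subsolution of the $\rho$-equation (the extra $M_\varepsilon$ absorbing the worst case of $-\chi\nabla v\cdot\nabla\phi$ because $\phi'\le0$), and hence also of the $r_\varepsilon$-equation (as $\rho<r_\varepsilon$, $\phi\ge0$), of which $u$ is a supersolution. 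Choosing $z_0$ negative enough that $\phi\big(x\cdot\xi-z_0-c'(t-t_0-s_1)\big)$ lies, at $t=t_0+s_1$, below $u(\cdot,t_0+s_1)$ throughout the reservoir, the comparison principle gives $u(x,t+t_0;t_0,u_0)\ge\phi\big(x\cdot\xi-z_0-c'(t-s_1)\big)$ for $t\ge s_1$; since $\phi$ is decreasing, $u(x,t+t_0;t_0,u_0)\ge\phi(0)>0$ for $x\cdot\xi\le c't-C$, and as $c'>c$ this gives $\liminf_{t\to\infty}\inf_{x\cdot\xi\le ct}u(x,t+t_0;t_0,u_0)\ge\phi(0)>0$.

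The step I expect to be the main obstacle is the ramp-up in (2): promoting the advancing front subsolution to a genuine lower bound on the whole moving half-space $\{x\cdot\xi\le ct\}$ requires the uniform large-ball persistence estimate together with the infinite "reservoir" supplied by the front-like initial datum, and keeping all the constants in that estimate independent of the location and of $t_0$. One must also pass carefully from the crude a priori bounds (valid from $t_0$) to the sharp ones (valid only from $t_0+T_\varepsilon(u_0)$), so that the speed surviving in the limit is exactly $c_+^*$, respectively $c_-^*$, rather than one depending on $\|u_0\|_\infty$. Apart from these points the argument is a routine transcription of the compactly supported case of Theorem \ref{spreading-properties}, with $|x|$ replaced by $x\cdot\xi$ and balls and their exteriors replaced by half-spaces.
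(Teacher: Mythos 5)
For part (1) your argument is essentially the paper's: push a plane exponential supersolution in the direction $\xi$, first with the crude gradient bound valid on $[t_0,t_0+T_\varepsilon(u_0)]$, then with the sharper asymptotic one, and let $\varepsilon\to0$. The ``$\min$'' with the constant is unnecessary (a pure exponential majorant of the front-like $u_0$ already exists), but this is cosmetic.

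For part (2) you take a genuinely different route at the crucial step, and it is worth spelling out the difference. Both you and the paper ultimately compare $u$ from below, for $t\ge t_0+T_\varepsilon(u_0)$, with a KPP-type problem whose reaction rate is $r_\varepsilon=a_{\inf}-\tfrac{\chi\mu a_{\sup}}{b_{\inf}-\chi\mu}-\chi\mu\varepsilon$ and whose drift is bounded by $M_\varepsilon$; and both finish with a monotone one-dimensional front moving at speed arbitrarily close to $2\sqrt{r_\varepsilon}-M_\varepsilon$ (you build a truncated traveling-wave subsolution, the paper passes to the drift-free frame and cites Lemma~\ref{spreading-lm1}/\cite{KPP}; these are interchangeable). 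Where you diverge is in establishing that at time $t_0+T_\varepsilon(u_0)$ the solution still has a reservoir, i.e.\ $\liminf_{x\cdot\xi\to-\infty}u(x,T_\varepsilon+t_0;t_0,u_0)>0$. You propose to rebuild this from scratch by a Dirichlet principal-eigenvalue argument on large balls tiling the half-space $\{x\cdot\xi\le -R_0\}$, with drift allowed up to $M_\varepsilon$. That is the classical Aronson--Weinberger ramp-up, but it is not free here: $M_\varepsilon$ is of order one, not small, so one needs the principal eigenvalue of $-\Delta-q\cdot\nabla$ on $B_R$ to lie below $\rho$ uniformly over \emph{time-dependent} $q$ with $\|q\|_\infty\le M_\varepsilon$, and the tool the paper has on hand (Lemma~\ref{main-new-lm2}, Step 1) only covers a small perturbative drift $|b_\epsilon|\le\epsilon_0(T)$. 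You would thus have to supply the full drift-uniform estimate yourself --- exactly the step you flag as the main obstacle. The paper avoids this altogether: it argues by contradiction, translating along a sequence $x_n\cdot\xi\to-\infty$, using the open-compact convergence Lemma~\ref{continuity with respect to open compact topology} to extract a limiting entire-in-space solution whose initial datum is bounded below by $\delta_0>0$ (because $u_0$ is front-like), and then invoking the pointwise-persistence Theorem~\ref{Main-thm1}(i) to conclude the limit cannot vanish at $(0,T_\varepsilon)$. This is shorter and reuses the machinery already established; your route is more self-contained and closer to the classical proofs, but as written it leaves the drift-uniform eigenvalue/ball estimate as a real gap rather than a routine detail.
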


\begin{rk}
\begin{itemize}
\item[(1)]{ We first note that under hypothesis {\bf (H3)}, using the uniform pointwise result given by Theorem \ref{Main-thm1} (ii), one can show that for every $t_0\in \R$ and every nonnegative  $u_0\in C^b_{\rm unif}(\R^N)$ with nonempty compact support, there hold
$$
\underline{M}\leq \liminf_{t\to \infty}\inf_{|x|\leq ct}u(x,t+t_0;t_0,u_0)\quad {\rm and}\quad   \limsup_{t\to\infty}\sup_{|x|\leq ct}u(x,t+t_0;t_0,u_0)\leq \overline{M}$$
for any $ 0<c<c_{+}^{*}(a,b,\chi,\lambda,\mu)$,
where $\underline{M}$, $\overline{M}$, and $c_{+}^{*}(a,b,\chi,\lambda,\mu) $ are given by \eqref{attracting-rect-eq2}, \eqref{attracting-rect-eq3}, and \eqref{c-negative-star} respectively. Hence, in the case of space-time homogeneous logistic, solutions of \eqref{P} with nonnegative initial data spread to the unique constant equilibrium solution, which recover [ Theorem 1.9 (1) \cite{SaSh1}] and [Theorem D (ii) \cite{SaSh2}].
}

\item[(2)] Note that $\lim_{\chi\to 0^+}(c_{-}^*(a,b,\chi,\lambda,\mu),c_{+}^*(a,b,\chi,\lambda,\mu) )=(2\sqrt{a_{\inf}},2\sqrt{a_{\sup}})$. Thus in the case of space-time homogeneous logistic source, we obtain that $c_{-}^*(a,b,\chi,\lambda,\mu) $  and $c_{-}^*(a,b,\chi,\lambda,\mu)$ converge to $2\sqrt{a}$, the minimal spreading speed of \eqref{KPP-Fisher equation}.

\item[(3)] Let
$$
C_0^b(\R^N)=\{u\in C_{\rm unif}^b(\R^N)\,|\, u\ge 0, \,\,\, {\rm supp}(u)\,\, {\rm is\,\,\,  nonempty\,\,\,  and\,\,\,  compact}\},
$$
$$
C_{\rm sup}=\{c\,|\,
\limsup_{t\to\infty}\sup_{|x|\geq c^{'}t}u(x,t+t_0;t_0,u_0)=0, \quad \forall c^{'}> c, \,\, u_0\in C_0^b(\R^N)\},
$$
and
$$
C_{\rm inf}=\{c\,|\, \liminf_{t\to\infty}\inf_{|x|\le c^{''}t}u(x,t+t_0;t_0,u_0)>0, \quad \forall c^{'}<c, \,\, u_0\in C_0^b(\R^N)\}.
$$
Let
$$
c_{\rm inf}^*(a,b,\chi,\lambda,\mu)=\sup\{c\,|\, c\in C_{\rm inf}\},\quad c_{\rm sup}^*(a,b,\chi,\lambda,\mu)=\inf\{c\,|\, c\in C_{\rm sup}\}.
$$
$[c_{\rm inf}^*(a,b,\chi,\lambda,\mu), c_{\rm sup}^*(a,b,\chi,\lambda,\mu)]$ is called the {\rm spreading speed interval} of \eqref{P}.
By Theorem \ref{spreading-properties},
$$
c_-^*(a,b,\chi,\lambda,\mu)\le c_{\rm inf}^*(a,b,\chi,\lambda,\mu)\le c_{\rm sup}^*(a,b,\chi,\lambda,\mu)\le c_+^*(a,b,\chi,\lambda,\mu).
$$
It is interesting to know how $c_{\rm inf}^*(a,b,\chi,\lambda,\mu)$ and $c_{\rm sup}^*(a,b,\chi,\lambda,\mu)$ depend on $\chi$, in particular,
it is interesting to know whether the chemotaxis speeds up, or slows down, or neither speeds up nor slows down the spreading speeds of the mobile species.
We plan to study this issue somewhere else.

\item[(4)] Theorem \ref{spreading-properties-1} describes the spreading properties of nonnegative solutions with front-like initial functions.
It is interesting to know whether \eqref{P} admits transition front solutions, which are the analogue of traveling wave solutions in
the space and time homogeneous chemotaxis models and have recently been widely studied in the absence of chemotaxis. We will study the existence
of transition fronts in \eqref{P} in the third part of the series.
\end{itemize}
\end{rk}

The rest of the paper is organized as follows. In section 2, we study the existence of global classical solutions and prove Theorem \ref{global-existence-tm}. Section 3 is devoted to the investigation of pointwise and uniform persistence in \eqref{P}.  Theorem \ref{Main-thm1} is proved in this section. In section 4, we discuss the asymptotic properties of solutions and give the proofs of Theorems \ref{spreading-properties} and  \ref{spreading-properties-1}.

\section{Global Existence}

This section is devoted to the study of the global existence of classical solutions to \eqref{P}. We start with the following result about the local existence and uniqueness of classical solutions.

\begin{lem}[Local existence]\label{local-existence-lem}
For every $t_0\in\R$ and nonnegative function $u_0\in C^{b}_{\rm unif}(\R^n)$, there is $T_{\max}=T_{\max}(t_0,u_0)\in (0, \infty]$ such that \eqref{P} has a unique nonnegative classical solution $(u(x,t;t_0,u_0),v(x,t;t_0,u_0))$ on $[t_0 , t_0+T_{\max})$
satisfying
\begin{equation}\label{local-cont-at-0-eq}
\lim_{t\searrow 0}\|u(\cdot,t_0+t;t_0,u_0)-u_0\|_{\infty}=0.
\end{equation}
Furthermore, if $T_{\max}<\infty$, then
\begin{equation}\label{extension-criterion}
\lim_{t\nearrow T_{\max}}\|u(\cdot,t_0+t;t_0,u_0)\|_{\infty}=\infty.
\end{equation}
\end{lem}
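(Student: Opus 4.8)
The plan is to eliminate $v$, recast \eqref{P} as a single nonlocal parabolic equation for $u$, solve it by a contraction mapping argument in $C([t_0,t_0+T],C^b_{\rm unif}(\R^N))$ for small $T>0$, and then recover classical regularity, nonnegativity, and the continuation criterion from standard parabolic theory. Since $\lambda>0$, the operator $\lambda I-\Delta$ is boundedly invertible on $C^b_{\rm unif}(\R^N)$: for $w\in C^b_{\rm unif}(\R^N)$ the unique bounded solution of $\Delta v-\lambda v+w=0$ is $v=(\lambda I-\Delta)^{-1}w=G_\lambda*w$, where $G_\lambda>0$ is the Bessel (Green) kernel of $\lambda I-\Delta$ on $\R^N$, which decays exponentially and satisfies $G_\lambda,\nabla G_\lambda\in L^1(\R^N)$ for every $N\ge1$ --- this is exactly where $\lambda>0$ is used. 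Hence $\|G_\lambda*w\|_\infty\le\|G_\lambda\|_{L^1}\|w\|_\infty$, $\|\nabla(G_\lambda*w)\|_\infty\le\|\nabla G_\lambda\|_{L^1}\|w\|_\infty$, and elliptic regularity gives $G_\lambda*w\in C^{1,\nu}_{\rm unif}(\R^N)$ with norm linear in $\|w\|_\infty$. Writing $v[u]:=\mu(\lambda I-\Delta)^{-1}u$, problem \eqref{P} becomes equivalent to
\begin{equation*}
\partial_t u=\Delta u-\chi\nabla\cdot\big(u\,\nabla v[u]\big)+u\big(a(x,t)-b(x,t)u\big),\qquad x\in\R^N,\ t>t_0.
\end{equation*}

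Next, let $\{e^{t\Delta}\}_{t\ge0}$ be the heat semigroup on $C^b_{\rm unif}(\R^N)$, which is analytic and strongly continuous there (this is the reason for working in $C^b_{\rm unif}$ rather than $C_b$), with $\|e^{t\Delta}f\|_\infty\le\|f\|_\infty$ and $\|\nabla e^{t\Delta}f\|_\infty\le Ct^{-1/2}\|f\|_\infty$. On a ball $B_R:=\{u\in C([t_0,t_0+T],C^b_{\rm unif}(\R^N)):\sup_t\|u(t)\|_\infty\le R\}$ with $R$ of order $\|u_0\|_\infty$, I would study the map
\begin{equation*}
(\Phi u)(t)=e^{(t-t_0)\Delta}u_0-\chi\int_{t_0}^t\big[\nabla\cdot e^{(t-s)\Delta}\big]\big(u(s)\nabla v[u](s)\big)\,ds+\int_{t_0}^t e^{(t-s)\Delta}\big[u(s)\big(a(\cdot,s)-b(\cdot,s)u(s)\big)\big]\,ds,
\end{equation*}
where $\nabla\cdot e^{(t-s)\Delta}$ means $e^{(t-s)\Delta}$ applied componentwise to a vector field followed by the divergence; this composite operator maps $C^b_{\rm unif}(\R^N)$ into itself with norm $\le C(t-s)^{-1/2}$. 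The key estimate is then $\|[\nabla\cdot e^{(t-s)\Delta}](u\nabla v[u])\|_\infty\le C(t-s)^{-1/2}\mu\|\nabla G_\lambda\|_{L^1}\|u\|_\infty^2$, with an integrable time singularity, $\int_{t_0}^t(t-s)^{-1/2}\,ds\le2\sqrt T$; combined with the obvious bounds on the last integral this gives $\sup_t\|(\Phi u)(t)\|_\infty\le\|u_0\|_\infty+C(R)(\sqrt T+T)$ and a Lipschitz bound for $\Phi$ on $B_R$ with constant $C(R)(\sqrt T+T)$. Hence for $T=T(\|u_0\|_\infty,a_{\sup},b_{\sup},\chi,\lambda,\mu)>0$ small, $\Phi$ is a contraction on $B_R$ with a unique fixed point $u$; strong continuity of $e^{t\Delta}$ gives \eqref{local-cont-at-0-eq}, and a Gronwall argument on short subintervals promotes uniqueness from $B_R$ to all of $C([t_0,t_0+T],C^b_{\rm unif}(\R^N))$.

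With $u$ and $\nabla v[u]$ bounded on $[t_0,t_0+T]$, interior parabolic $L^p$ and Schauder estimates, the uniform H\"older continuity of $a,b$ from {\bf (H)}, and the bootstrap ``$u\in C^{\theta,\theta/2}_{\rm loc}\Rightarrow v[u]\in C^{2,\theta}_{\rm loc}\Rightarrow$ the right-hand side is H\"older'' upgrade $u$ to $C^{2,1}(\R^N\times(t_0,t_0+T))$, so that $(u,v[u])$ is a classical solution of \eqref{P}. For nonnegativity, observe that once $u,v[u]$ are known to be bounded, expanding the chemotaxis term via $\Delta v[u]=\lambda v[u]-\mu u$ shows that $u$ solves the \emph{linear} equation $\partial_t u-\Delta u+\chi\nabla v[u]\cdot\nabla u-c(x,t)u=0$ with $c:=a-\chi\lambda v[u]+(\chi\mu-b)u\in L^\infty$; since $u$ is bounded with $u(\cdot,t_0)=u_0\ge0$, the Phragm\'en--Lindel\"of maximum principle on $\R^N$ (comparison with $\varepsilon e^{\kappa t}$) forces $u\ge0$, whence $v[u]=\mu G_\lambda*u\ge0$ as well. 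Finally, since the existence time $T$ above depends only on $\|u_0\|_\infty$ and the fixed data, the solution extends to a maximal interval $[t_0,t_0+T_{\max})$; and if $T_{\max}<\infty$ while $\limsup_{t\nearrow T_{\max}}\|u(\cdot,t_0+t)\|_\infty<\infty$, restarting the construction from times near $T_{\max}$ with a uniform existence time would extend the solution past $T_{\max}$, a contradiction, which gives \eqref{extension-criterion}.

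The genuinely delicate point is the chemotaxis term: it must be kept in divergence form and handled by transferring the derivative onto the semigroup, exploiting the integrable singularity $t^{-1/2}$ of $\|\nabla e^{t\Delta}\|$ together with the $L^1$-bound on $\nabla G_\lambda$ --- precisely where $\lambda>0$ enters. A secondary subtlety is that, after expansion, the term $(\chi\mu-b)u^2$ has an unfavorable sign for comparison, so nonnegativity should be established only after $u$ is known to be bounded (so that this term becomes a bounded zeroth-order coefficient), or else by first solving with $b(x,t)u^2$ replaced by a globally Lipschitz truncation and then removing it; a naive maximum principle applied directly to the full nonlinear equation would not suffice.
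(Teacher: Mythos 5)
Your argument is correct and is essentially the same route the paper takes: the paper's proof of this lemma simply defers to ``properly modified arguments of the proof of Theorem 1.1 in \cite{SaSh1},'' which is exactly the Banach fixed-point scheme you describe (eliminate $v$ via $v=\mu(\lambda I-\Delta)^{-1}u$, set up the mild formulation with the heat semigroup on $C^b_{\rm unif}(\R^N)$, handle the divergence-form chemotaxis term by the $(t-s)^{-1/2}$ smoothing of $\nabla e^{(t-s)\Delta}$, bootstrap to $C^{2,1}$ regularity, obtain nonnegativity by the maximum principle applied to the linearized equation, and get the blow-up alternative from the local existence time depending only on $\|u_0\|_\infty$). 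Your cautionary remarks about keeping the chemotaxis term in divergence form and about establishing nonnegativity only after boundedness is known are precisely the points where a careless version of the argument would fail, and they match the care taken in \cite{SaSh1}.
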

\begin{proof}
The result follows from properly modified arguments of the proof of \cite[Theorem 1.1]{SaSh1}.
\end{proof}

Next, we present a lemma on the bounds of $\|v(\cdot,t+t_0;t_0,u_0)\|_\infty$ and $\|\nabla v(\cdot,t+t_0;t_0,u_0)\|_\infty$, which will be used in this section as well as later sections.

\begin{lem}
\label{bounds-on-v}
For every $t_0\in\R$ and nonnegative function $u_0\in C^{b}_{\rm unif}(\R^n)$, the following hold,
\begin{equation}\label{v-bound-eq1}
\|v(\cdot, t+t_0;t_0,u_0)\|_{\infty}\leq \frac{\mu}{\lambda}\|u(\cdot, t+t_0;t_0,u_0)\|_{\infty},\ \forall\,\, 0\le t<T_{\max}(t_0,u_0),
\end{equation}
and
\begin{equation}\label{v-bound-eq2}
\|\nabla v(\cdot, t+t_0;t_0,u_0)\|_{\infty}\leq \frac{\mu\sqrt{N}}{\sqrt{\lambda}}\|u(\cdot, t+t_0;t_0,u_0)\|_{\infty},\ \forall\, \, 0\le t<T_{\max}(t_0,u_0).
\end{equation}
\end{lem}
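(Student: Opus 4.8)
The plan is to use the fact that, for each fixed $t\in[0,T_{\max}(t_0,u_0))$, the second equation of \eqref{P} determines $v(\cdot,t+t_0;t_0,u_0)$ as the unique bounded solution on $\R^N$ of $\Delta v-\lambda v+\mu u(\cdot,t+t_0;t_0,u_0)=0$; uniqueness among bounded functions is immediate from the maximum principle, and this bounded solution is precisely the one produced in the construction in Lemma~\ref{local-existence-lem}. Writing $u(t):=u(\cdot,t+t_0;t_0,u_0)\in C^b_{\rm unif}(\R^N)$ for brevity, this solution has the Bessel-potential/heat-semigroup representation
\[
v(\cdot,t+t_0;t_0,u_0)=\mu(\lambda I-\Delta)^{-1}u(t)=\mu\int_0^\infty e^{-\lambda s}\,e^{s\Delta}u(t)\,ds,
\]
equivalently $v(x,t+t_0;t_0,u_0)=\mu\int_{\R^N}G_\lambda(x-y)u(y,t+t_0;t_0,u_0)\,dy$ with the nonnegative convolution kernel $G_\lambda(z)=\int_0^\infty e^{-\lambda s}(4\pi s)^{-N/2}e^{-|z|^2/(4s)}\,ds$.

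For \eqref{v-bound-eq1} I would simply use that the heat kernel has total mass $1$, so $e^{s\Delta}$ is an $L^\infty$ contraction: $\|e^{s\Delta}u(t)\|_\infty\le\|u(t)\|_\infty$. Multiplying by $\mu e^{-\lambda s}$ and integrating in $s$ over $(0,\infty)$ gives $\|v(\cdot,t+t_0;t_0,u_0)\|_\infty\le\mu\|u(t)\|_\infty\int_0^\infty e^{-\lambda s}\,ds=\frac{\mu}{\lambda}\|u(t)\|_\infty$. (Equivalently: $G_\lambda\ge0$ and $\int_{\R^N}G_\lambda=1/\lambda$.)

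For \eqref{v-bound-eq2}, since $\nabla G_\lambda\in L^1(\R^N)$ one may differentiate under the integral sign, so $\partial_{x_i}v(x,t+t_0;t_0,u_0)=\mu\int_{\R^N}(\partial_iG_\lambda)(x-y)u(y,t+t_0;t_0,u_0)\,dy$ and hence $|\partial_{x_i}v(x,t+t_0;t_0,u_0)|\le\mu\|u(t)\|_\infty\,\|\partial_iG_\lambda\|_{L^1(\R^N)}$ for each $i$. The key computation is to evaluate this $L^1$ norm exactly: from the heat representation, $\partial_iG_\lambda(z)=-\int_0^\infty e^{-\lambda s}\frac{z_i}{2s}(4\pi s)^{-N/2}e^{-|z|^2/(4s)}\,ds$, and for each fixed $s$ the integrand is sign-definite in $z$, so integrating $|z_i|$ against the Gaussian factor (which factorizes over the coordinates) gives $\int_{\R^N}\frac{|z_i|}{2s}(4\pi s)^{-N/2}e^{-|z|^2/(4s)}\,dz=\frac{1}{\sqrt{\pi s}}$, whence
\[
\|\partial_iG_\lambda\|_{L^1(\R^N)}=\int_0^\infty\frac{e^{-\lambda s}}{\sqrt{\pi s}}\,ds=\frac{\Gamma(1/2)}{\sqrt{\pi}}\lambda^{-1/2}=\frac{1}{\sqrt{\lambda}},
\]
independently of $i$ and of $N$. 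Summing the squares of the componentwise bounds then yields $|\nabla v(x,t+t_0;t_0,u_0)|^2=\sum_{i=1}^N|\partial_{x_i}v(x,t+t_0;t_0,u_0)|^2\le N\frac{\mu^2}{\lambda}\|u(t)\|_\infty^2$, which is \eqref{v-bound-eq2}.

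The only subtle point — the main obstacle — is obtaining the sharp constant $\sqrt N$ (rather than something larger) in \eqref{v-bound-eq2}. The naive estimate $|\nabla v(x)|\le\mu\|u\|_\infty\,\bigl\||\nabla G_\lambda|\bigr\|_{L^1}$ yields the constant $\sqrt{\pi}\,\Gamma(\tfrac{N+1}{2})/\Gamma(\tfrac N2)$, which is strictly larger than $\sqrt N$ once $N\ge2$ (for instance $\pi/2>\sqrt2$ at $N=2$). Estimating the partials $\partial_{x_i}v$ one coordinate at a time and recombining in the Euclidean norm is what recovers the $\sqrt N$, and this works precisely because of the $N$-independent identity $\|\partial_iG_\lambda\|_{L^1}=\lambda^{-1/2}$.
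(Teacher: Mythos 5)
Your proof is correct and follows essentially the same route as the paper: the paper writes down the same kernel representation $v(x,t+t_0;t_0,u_0)=\mu\int_0^\infty\int_{\R^N}\frac{e^{-\lambda s}e^{-|x-y|^2/(4s)}}{(4\pi s)^{N/2}}u(y,t+t_0;t_0,u_0)\,dy\,ds$ and then says that \eqref{v-bound-eq1} and \eqref{v-bound-eq2} follow from simple calculations. You have simply carried out those calculations explicitly, and your observation about estimating $\partial_{x_i}v$ one coordinate at a time to recover the constant $\sqrt N$ is exactly the computation the paper has in mind.
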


\begin{proof}
Observe that
$$
v(x, t+t_0;t_0,u_0)=\mu\int_{0}^{\infty}\int_{\R^N}\frac{e^{-\lambda s}e^{-\frac{|x-y|^2}{4s}}}{(4\pi s)^{\frac{N}{2}}}u(y, t+t_0;t_0,u_0)dyds.
$$
\eqref{v-bound-eq1} and \eqref{v-bound-eq2} then follow from simple calculations.
\end{proof}

Now we present the proof of Theorem \ref{global-existence-tm}. Note that Theorem \ref{global-existence-tm} provides a sufficient condition on the parameters $\chi$ and $b_{\inf}$ to guarantee the existence of time globally defined classical solutions.

\begin{proof}[Proof of Theorem \ref{global-existence-tm}]
Let $t_0\in\R$ and $u_0\in C^{b}_{\rm unif}(\R^n)$, $u_0\geq 0$, be given. According to Lemma \ref{local-existence-lem}, there is $T_{\max}=T_{\max}(t_0,u_0)\in (0, \infty]$ such that \eqref{P} has a unique nonnegative classical solution $(u(x,t;t_0,u_0),v(x,t;t_0,u_0))$ on $[t_0 , t_0+T_{\max})$ satisfying \eqref{local-cont-at-0-eq} and \eqref{extension-criterion}.  Since $b_{\inf}\ge \chi\mu$, we have that  $(u(x,t;t_0,u_0),v(x,t;t_0,u_0))$ satisfies
\begin{align}\label{global-eq01}
u_t&=\Delta u -\chi\nabla v\cdot\nabla u +u(a(x,t)-u(b(x,t)-\chi\mu)-\chi\lambda v)\nonumber\\
&\leq \Delta u -\chi\nabla v\cdot\nabla u +u(a(x,t)-u(b(x,t)-\chi\mu))\nonumber\\
&\leq \Delta u -\chi\nabla v\cdot\nabla u +u(a_{\sup} -(b_{\inf} -\chi\mu) u)
\end{align}
for $t\in (t_0,t_0+T_{\max})$.
Thus,  by comparison principles for parabolic equations, it follows from \eqref{global-eq01} that
\begin{equation}\label{global-eq02}
u(x,t+t_0;t_0,u_0)\leq \overline{u}(t;\|u_0\|_{\infty}),\quad \forall \, 0\le t<T_{\max}(t_0,u_0), \ \forall\ x\in\R^N,
\end{equation}
where $\overline{u}(t;\|u_0\|_{\infty})$ solves the ODE
\begin{equation}\label{global-eq03}
\begin{cases}
\frac{d}{dt}\overline{u}=\overline{u}(a_{\sup}-(b_{\inf}-\chi\mu)\overline{u})\\
\overline{u}(0)=\|u_0\|_{\infty}.
\end{cases}
\end{equation}
Since $b_{\inf}\geq \chi\mu$, then $\overline{u}(t;\|u_0\|_{\infty})$ is defined for all $t\geq 0$. This implies that $T_{\max}(t_0,u_0)=\infty$.
 Moreover,  $\overline{u}(t;\|u_0\|_{\infty})\le |u_0\|_{\infty}e^{ta_{\sup}}$ for all $t>0$.
 Hence \eqref{u-upper-bound-eq1} holds.

 \smallskip

 (i) If $b_{\inf}> \chi\mu$, we have that $\overline{u}(t;\|u_0\|_{\infty})\leq\max\{\|u_0\|_{\infty}, \frac{a_{\sup}}{b_{\inf}-\chi\mu}\}$ for all $t>0$, and
 $\lim_{t\to\infty}\overline{u}(t;\|u_0\|_{\infty})=\frac{a_{\sup}}{b_{\inf}-\chi\mu}$ provided that $\|u_0\|_\infty>0$.  Hence \eqref{u-upper-bound-eq2}
 and \eqref{u-upper-bound-eq3} hold.

(ii)  First, by \eqref{v-bound-eq1},
\begin{align}\label{d-001}
u_t&=\Delta u -\chi\nabla v\cdot\nabla u +u(a(x,t)-u(b(x,t)-\chi\mu)-\chi\lambda v)\nonumber\\
&\ge \Delta u -\chi\nabla v\cdot\nabla u +u(a_{\inf}-\|u(\cdot,t;t_0,u_0)\|_\infty(b_{\sup}-\chi\mu)-\chi\lambda \frac{\mu}{\lambda}\|u(\cdot,t;t_0,u_0)\|_\infty)\nonumber\\
&=\Delta u -\chi\nabla v\cdot\nabla u +u(a_{\inf}-\|u(\cdot,t;t_0,u_0)\|_\infty b_{\sup})
\end{align}
for $t>t_0$.
By comparison principle for parabolic equations, we have
$$
u(x,t+t_0;t_0,u_0)\ge e^{\int_{t_0}^{t+t_0}(a_{\inf}-\|u(\cdot,s+t_0;t_0,u_0)\|_\infty b_{\sup})ds}u_{0\inf}\quad \forall t\ge t_0.
$$
This together with $u_{0\inf}>0$  implies that
$$
\inf_{x\in\R^N} u(x,t+t_0;t_0,u_0)>0\quad \forall\,\, t\ge t_0.
$$

 Next, for any $\epsilon>0$, there is $T^\epsilon>0$ such that
$$
u(x,t+t_0;t_0,u_0)\le u^\infty+\epsilon\,\,\, {\rm and}\,\,\, v(x,t+t_0;t_0,u_0)\le \frac{\mu}{\lambda}(u^\infty+\epsilon)\quad \forall \,\, t\ge T^\epsilon,
$$
where  $u^\infty=\limsup_{t\to\infty}\sup_{x\in\R^N}u(x,t+t_0;t_0,u_0)$. This combined with \eqref{d-001} imply that
\begin{align*}
u_t
&\geq\Delta u -\chi\nabla v\cdot\nabla u +u(a_{\inf}-(u^\infty+\epsilon)b_{\sup})
\end{align*}
for $t\ge T^\epsilon$. By comparison principle for parabolic equations again, we have
$$
u(x,t+t_0;t_0,u_0)\ge e^{(a_{\inf}-(u^\infty+\epsilon)b_{\sup})(t-T^\epsilon)}\inf_{x\in\R^N} u(x,T^\epsilon+t_0;t_0,u_0)\quad \forall\,\, t\ge T^\epsilon.
$$
By the boundedness of $u(x,t+t_0,t_0,u_0)$ for $t\ge 0$, we must have
$$
a_{\inf}-(u^\infty+\epsilon)b_{\sup}\le 0\quad \forall\,\, \epsilon>0.
$$
The first inequality in \eqref{asymptotic-lower-bound} then follows.

 Now, if $\liminf_{t\to\infty}\inf_{x\in\R^N}u(x,,t+t_0;t_0,u_0)=0$,
then the second inequality in \eqref{asymptotic-lower-bound} holds trivially. Assume $u_\infty:=\liminf_{t\to\infty}\inf_{x\in\R^N}u(x,,t+t_0;t_0,u_0)>0$.
Then for any $0<\epsilon<u_\infty$, there is $T_\epsilon>0$ such that
$$
u(x,t+t_0;t_0,u_0)\ge  u_\infty-\epsilon\,\,\, {\rm and}\,\,\, v(x,t+t_0;t_0,u_0)\ge \frac{\mu}{\lambda}(u_\infty-\epsilon)\quad \forall \,\, t\ge T_\epsilon.
$$
 This combined with \eqref{global-eq01} yields that
\begin{align*}
u_t
&\le \Delta u -\chi\nabla v\cdot\nabla u +u(a_{\sup}-(u_\infty-\epsilon)b_{\inf})
\end{align*}
for $t\ge T_\epsilon$. By comparison principle for parabolic equations, we have
$$
u(x,t+t_0;t_0,u_0)\le e^{(a_{\sup}-(u_\infty-\epsilon)b_{\inf})(t-T_\epsilon)}\|u(\cdot,T_\epsilon+t_0;t_0,u_0)\|\quad \forall \,\, t\ge T_\epsilon.
$$
This together with the first inequality in \eqref{asymptotic-lower-bound} implies that
$$
a_{\sup}-(u_\infty-\epsilon)b_{\inf}\ge 0\quad \forall\,\, 0<\epsilon<u_\infty.
$$
The second inequality in \eqref{asymptotic-lower-bound} then follows.

(iii)
Let $x\in\R^N$ and  $t\ge 0$ be fixed. Define
$$
f(y)= \lambda v(x+y, t+t_0;t_0,u_0)- \mu u(x+y, t+t_0;t_0,u_0), \quad \forall y\in B(0,3)
$$
and
$$
\phi(y)= v(x+y, t+t_0;t_0,u_0), \quad \forall y\in \bar{B}(0,3)
$$
Let $G_1$ be the solution of
$$
\begin{cases}
\Delta G_{1}=f, \quad y\in B(0,3)\cr
G_{1}=0, \quad \text{on } \ \partial B(0,3).
\end{cases}
$$
Choose $p\gg N$ such that $W^{2,p}(B(0,3))\subset C^{1+\nu}_{\rm unif}(B(0,3))$ (with continuous embedding). Thus, by regularity for elliptic equations, there is $c_{1,\nu}>0$ (depending only on $\nu$, $N$ and the Lebesgue measure $|B(0,3)|$ of $B(0,3)$) such that
\begin{equation}\label{z3}
\|G_{1}\|_{C^{1+\nu}_{\rm unif}(B(0,3))}\leq c_{1,\nu}\|f\|_{L^{p}(B(0,3))}.
\end{equation}

Next, define
$$G_2(y)=v(x+y, t+t_0;t_0,u_0)-G_{1}(y), \quad \forall y\in\bar{B}(0,3).$$ Hence $G_2$ solves
$$
\begin{cases}
\Delta G_{2}=0, \quad y\in B(0,3)\cr
G_{2}(y)=\phi(y), \quad y\in \partial B(0,3).
\end{cases}
$$
Thus, (see \cite[page 41]{Evans}),
$$
G_{2}(y)=\frac{2-\|y\|^{2}}{2N\omega_{N}}\int_{\partial B(0,3)}\frac{\phi(z)}{|y-z|^{N}}dS(z), \quad \forall y\in B(0,3),
$$
where $\omega_{N}=|B(0,1)|$ is the Lebesgue measure of $B(0,1)$, and
\begin{equation}\label{z4}
\partial_{y_i}G_2(y)=-\frac{y_{i}}{N\omega_{N}}\int_{\partial B(0,3)}\frac{\phi(z)}{|y-z|^{N}}dS(z)+ \frac{2-\|y\|^{2}}{2\omega_{N}}\int_{\partial B(0,3)}\frac{(y_i-z_i)\phi(z)}{|y-z|^{N+2}}dS(z), \forall y\in B(0,3).
\end{equation}
But
$$|y+h-z|\geq |z|-|y+h|\geq 1,\quad \forall z\in\partial B(0,3),\ y,h\in {B}(0,1)$$
and
$$
||y+h-z|-|y-z||\leq |h|, \forall y,h,z\in\R^N.
$$
It follows from \eqref{z4}, that there is $c_{2,\nu}>0$ (depending only on $\nu$, $N$ and $|B(0,3)|$) such that
$$
|\partial_{y_i}G_2(y+h)-\partial_{y_i}G_2(y) |\leq c_{2,\nu}|h|^{\nu}\|\phi\|_{\infty},\quad  \forall y,h\in {B}(0,1).
$$
Combining the last inequality with \eqref{z3}, there is $c_{\nu}(N,P)$(depending only on $\nu$, $N$ and $|B(0,3)|$) such that
\begin{equation}\label{global-eq07}
\|G_{1}+G_2\|_{C^{1+\nu}_{\rm unif}(B(0,1))}\leq c_{\nu}[\|f\|_{\infty}+\|\phi\|_{\infty}].
 \end{equation}
Note that $v(x+h, t+t_0;t_0,u_0)=(G_{1}+G_{2})(h)$,  thus (iii) follows from \eqref{u-upper-bound-eq2}, \eqref{v-bound-eq1}, \eqref{v-bound-eq2},  and \eqref{global-eq07}.
 \end{proof}

\section{Pointwise persistence}

In this section we explore the pointwise persistence of positive
classical solutions and prove Theorem \ref{Main-thm1}. In order to do so, we first prove some lemmas. The next Lemma provides a finite time pointwise persistence for solutions $(u(x,t;t_0,u_0),v(x,t;t_0,u_0))$ of \eqref{P} with strictly positive function $u_0$.

\begin{lem} \label{main-lem1}
 Suppose that {\bf (H1)} holds. Then for every $T>0$, $t_0\in\R,$ and  for every nonnegative initial function $ u_0\in C^{b}_{\rm unif}(\R^N)$, there holds that
\begin{equation}\label{eq1-main-lem1}
\inf_{x\in\R^N}u(x,t+t_0;t_0,u_0)\geq u_{0\inf}e^{t(a_{\inf}-b_{sup}\|u_0\|_{\infty}e^{T a_{\sup}})}, \quad \forall 0\leq t\leq T.
\end{equation}
In particular for every $T>0$ and  for every nonnegative initial $u_0\in C^b_{\rm unif}(\R^N)$ satisfying $\|u_0\|_{\infty}\leq M_{T}:=\frac{a_{\inf}e^{-a_{\sup}T}}{b_{\sup}}$, we have that
\begin{equation}\label{eq2-main-lem1}
\inf_{x\in\R^N}u(x,t+t_0;t_0,u_0)\geq \inf_{x\in\R^N}u_{0}(x), \quad \forall 0\leq t\leq T,\ \forall t_{0}\in\R.
\end{equation}
\end{lem}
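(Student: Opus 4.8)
The plan is to recast the first equation of \eqref{P} so that the chemotaxis term becomes a bounded transport term plus an additional quadratic term that only strengthens the logistic damping, and then to compare the solution from below with the (spatially homogeneous) solution of a linear ODE on the finite time window $[t_0,t_0+T]$. First I would eliminate $\Delta v=\lambda v-\mu u$ from $\nabla\cdot(u\nabla v)=\nabla u\cdot\nabla v+u\Delta v$, so that any nonnegative classical solution of \eqref{P} satisfies
$$
u_t=\Delta u-\chi\nabla v\cdot\nabla u+u\big(a(x,t)-(b(x,t)-\chi\mu)u-\chi\lambda v\big),
$$
which is precisely \eqref{global-eq01}. By Theorem \ref{global-existence-tm} the solution is global and, by \eqref{u-upper-bound-eq1}, $\|u(\cdot,t+t_0;t_0,u_0)\|_\infty\le\|u_0\|_\infty e^{a_{\sup}t}\le\|u_0\|_\infty e^{a_{\sup}T}$ for $0\le t\le T$; combining this with \eqref{v-bound-eq1} in Lemma \ref{bounds-on-v} gives $\|v(\cdot,t+t_0;t_0,u_0)\|_\infty\le\frac{\mu}{\lambda}\|u_0\|_\infty e^{a_{\sup}T}$ on the same interval.

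Next, using {\bf (H1)} (so that $b(x,t)-\chi\mu>0$) together with $u\ge0$, I would bound the zeroth-order coefficient from below, for $0\le t\le T$, by
$$
a(x,t)-(b(x,t)-\chi\mu)u-\chi\lambda v\ \ge\ a_{\inf}-(b_{\sup}-\chi\mu)\|u_0\|_\infty e^{a_{\sup}T}-\chi\mu\|u_0\|_\infty e^{a_{\sup}T}=a_{\inf}-b_{\sup}\|u_0\|_\infty e^{a_{\sup}T}=:\gamma;
$$
the key point is the cancellation $(b_{\sup}-\chi\mu)+\chi\mu=b_{\sup}$, so that the potentially destabilising term $-\chi\lambda v$ is fully absorbed into the logistic damping after using $v\le\frac{\mu}{\lambda}\|u\|_\infty$. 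Consequently $u(\cdot,\cdot;t_0,u_0)$ is a supersolution of $w_t=\Delta w-\chi\nabla v\cdot\nabla w+\gamma w$ on $\R^N\times(t_0,t_0+T)$, while the spatially constant function $w(x,t)\equiv u_{0\inf}e^{\gamma(t-t_0)}$ solves this equation with $w(\cdot,t_0)\equiv u_{0\inf}\le u_0$. Since the drift $\chi\nabla v$ is bounded on $\R^N\times[t_0,t_0+T]$ by \eqref{v-bound-eq2}, the comparison principle for linear parabolic equations applies and yields $u(x,t+t_0;t_0,u_0)\ge u_{0\inf}e^{\gamma t}$ for all $x\in\R^N$ and $0\le t\le T$, which is \eqref{eq1-main-lem1}. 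For \eqref{eq2-main-lem1}, note that $\|u_0\|_\infty\le M_T=a_{\inf}e^{-a_{\sup}T}/b_{\sup}$ forces $\gamma\ge0$, hence $e^{\gamma t}\ge1$ for $t\ge0$, and \eqref{eq1-main-lem1} gives $u(x,t+t_0;t_0,u_0)\ge u_{0\inf}$.

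I do not anticipate a serious obstacle here. The argument rests entirely on two ingredients that are already available: the a priori exponential-in-time upper bound of Theorem \ref{global-existence-tm}, which freezes the reaction coefficient into the \emph{constant} $\gamma$ over the finite window $[0,T]$, and a one-dimensional comparison with the ODE $y'=\gamma y$. The only point that deserves a careful word is the legitimacy of the parabolic comparison principle, namely that the transport coefficient $\chi\nabla v$ is bounded on $\R^N\times[t_0,t_0+T]$ — exactly the content of \eqref{v-bound-eq2} — and that the comparison function may be chosen spatially constant so that $\Delta w\equiv0$ and $\nabla w\equiv0$.
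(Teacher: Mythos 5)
Your proposal is correct and follows essentially the same route as the paper's own proof: recast the first equation using $\Delta v=\lambda v-\mu u$ so that the coefficient of $u$ becomes $a-(b-\chi\mu)u-\chi\lambda v$, apply the exponential-in-time bound \eqref{u-upper-bound-eq1} together with \eqref{v-bound-eq1} to freeze the zeroth-order term to the constant $\gamma=a_{\inf}-b_{\sup}\|u_0\|_\infty e^{a_{\sup}T}$ via the cancellation $(b_{\sup}-\chi\mu)+\chi\mu=b_{\sup}$, and then compare with the spatially constant subsolution $u_{0\inf}e^{\gamma t}$. The only cosmetic addition you make is the explicit remark about the boundedness of the drift $\chi\nabla v$ on the finite window, which the paper leaves implicit.
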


\begin{proof} Let $t_0\in\R$ and  $u_0\in C^b_{\rm unif}(\R^N)$, $u_0\geq 0$, be given. Since {\bf (H1)} holds, it follows from Theorem \ref{global-existence-tm} that  $(u(\cdot,t+t_0;t_0,u_0), u(\cdot,t+t_0;t_0,u_0))$ is defined for all $t\geq 0.$  By  \eqref{u-upper-bound-eq1} and \eqref{v-bound-eq1},
$$
\chi\lambda \|v(\cdot,t+t_0;t_0;u_0)\|_{\infty}\leq \chi\mu\|u(\cdot,t+t_0;t_0;u_0)\|_{\infty}\leq\chi\mu \|u_0\|_{\infty}e^{a_{\sup}t},\quad \forall\ t\geq 0.
$$
 Hence, for every $t_0< t\leq t_0+T$, it follows from the previous inequality and \eqref{d-001}  that
\begin{align}\label{aa-eq1}
u_t
&\geq \Delta u -\chi\nabla v\cdot\nabla u +u(a_{\inf}-b_{\sup}\|u_0\|_{\infty}e^{a_{\sup}T}).
\end{align}
Thus, by comparison principle for parabolic equations, it follows from \eqref{aa-eq1} that
\begin{equation}\label{aa-eq2}
\inf_{x\in\R^N}u(x,t+t_0;t_0,u_0)\geq u_{0\inf}e^{t(a_{\inf}-b_{sup}\|u_0\|_{\infty}e^{a_{\sup}T})}, \quad \forall\, 0\leq t\leq T,\,\, T>0,\,\,  t_0\in\R.
\end{equation}
Observe that $\|u_0\|_{\infty}\leq M_{T}:=\frac{a_{\inf} e^{-a_{\sup}T}}{b_{\sup}}$ implies that $a_{\inf}-b_{\sup}\|u_0\|_{\infty}e^{a_{\sup}T}\geq 0$. This combined with \eqref{aa-eq2} yields \eqref{eq2-main-lem1}.
\end{proof}

\begin{rk} We note that a slight modification of the proof of Lemma \ref{main-lem1} yields that if {\bf (H1)} does not hold then \begin{equation}\label{eq1-main-lem1'}
\inf_{x\in\R^N}u(x,t+t_0;t_0,u_0)\geq u_{0\inf}e^{t(a_{\inf}-(b_{\sup}+\chi\mu)\|u_0\|_{\infty}e^{a_{\sup}T})}, \quad \forall\,\,  0\leq t\leq T<T_{\max}(u_0),
\end{equation}
for every nonnegative initial $u_0\in C^{b}_{\rm unif}(\R^N)$.
Hence for every initial function $ u_0\in C^{b}_{\rm unif}(\R^N)$ with $\inf_{x\in\R^N}u_0(x)>0$, it always holds that
$$
\inf_{x\in\R^N, 0\le t\le T}u(x,t+t_0;t_0,u_0)>0,\quad \forall\ 0\leq T<T_{\max}(u_0), \forall t_0\in\R.
$$
It should be noted \eqref{eq1-main-lem1'} and \eqref{eq1-main-lem1} do not implies the  pointwise persistence of $ u(x,t+t_0;t_0,u_0)$.
\end{rk}

\begin{lem}\label{continuity with respect to open compact topology}
Assume that (H1) holds. Let $u_0\in C^{b}_{\rm unif}(\R^N)$, $\{u_{0n}\}_{n\geq 1}$ be a sequence of nonnegative functions in $C^{b}_{\rm unif}(\R^N)$, and let $\{t_{0n}\}_{n\geq 1}$ be a sequence of real numbers. Suppose that $0\leq u_{0n}(x)\leq M:=\frac{a_{\sup}}{b_{\inf}-\chi\mu}$ and $\{u_{0n}\}_{n\geq 1}$ converges uniformly locally to $u_{0}$. Then there exist a subsequence {$\{t_{0n'}\}$ of $\{t_{0n}\}$}, functions $a^*(x,t), b^*(x,t)$  such that $(a(x,t+t_{0n'}),b(x,t+t_{0n'}))\to (a^*(x,t),b^*(x,t))$ locally uniformly as $n'\to\infty$, and  $u(x,t+t_{0n'};t_{0n'},u_{0n'})\to u^*(x,t;0,u_0)$ locally uniformly in $C^{2,1}(\R^N\times(0, \infty))$ as $n'\to\infty$, where $(u^*(x,t;0,u_0),v^*(x,t;0,u_0)$ is the classical solution of
\begin{equation*}
\begin{cases}
u_{t}(x,t)=\Delta u(x,t)-\chi\nabla\cdot (u(x,t)\nabla v(x,t))+(a^{*}(x,t)-b^{*}(x,t)u(x,t))u(x,t), \quad x\in\R^N\cr
0=(\Delta-\lambda I)v^*(x,t)+\mu u^*(x,t), \quad x\in\R^N\cr
u^*(x,0)=u_{0}(x), \quad x\in\R^N.
\end{cases}
\end{equation*}
\end{lem}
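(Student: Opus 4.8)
The plan is to combine the uniform bounds of Theorem~\ref{global-existence-tm} with interior parabolic regularity, extract a convergent subsequence by Arzel\`a--Ascoli, and then control the behaviour at the initial time via a Duhamel representation. Throughout, write $a_n(x,t):=a(x,t+t_{0n})$, $b_n(x,t):=b(x,t+t_{0n})$, $u_n:=u(\cdot,\cdot+t_{0n};t_{0n},u_{0n})$ and $v_n:=v(\cdot,\cdot+t_{0n};t_{0n},u_{0n})$. Since $0\le u_{0n}\le M:=\frac{a_{\sup}}{b_{\inf}-\chi\mu}$ and {\bf (H1)} holds, \eqref{u-upper-bound-eq2} gives $0\le u_n(x,t)\le M$ for all $x\in\R^N$, $t\ge0$, $n\ge1$; by Lemma~\ref{bounds-on-v}, $\|v_n(\cdot,t)\|_\infty\le\frac{\mu}{\lambda}M$, and by Theorem~\ref{global-existence-tm}(iii), $\|v_n(\cdot,t)\|_{C^{1,\nu}_{\rm unif}(\R^N)}\le K_1$, all uniformly in $n$ and $t\ge0$. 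Hypothesis {\bf (H)} makes $\{a_n\}$ and $\{b_n\}$ uniformly bounded and equicontinuous on $\R^N\times\R$, so after passing to a subsequence (still indexed by $n$) there are $a^*,b^*$, uniformly H\"older with exponent $\nu$ and satisfying the same bounds as in {\bf (H)}, with $(a_n,b_n)\to(a^*,b^*)$ locally uniformly.

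Next, interior compactness for $u_n$: using $\Delta v_n=\lambda v_n-\mu u_n$, each $u_n$ solves the linear parabolic equation
\begin{equation*}
\partial_t u_n=\Delta u_n-\chi\nabla v_n\cdot\nabla u_n+\big(a_n-\chi\lambda v_n-(b_n-\chi\mu)u_n\big)u_n ,
\end{equation*}
whose drift $-\chi\nabla v_n$ and zeroth-order coefficient are bounded uniformly in $n$. Interior $L^p$ estimates on parabolic cylinders contained in $\R^N\times(0,\infty)$, with $p\gg N$, together with parabolic Sobolev embedding, give uniform-in-$n$ bounds for $u_n$ in $C^{1+\alpha,(1+\alpha)/2}$ on compact subsets of $\R^N\times(0,\infty)$ for some $\alpha\in(0,\nu]$; the convolution formula of Lemma~\ref{bounds-on-v} then promotes $v_n$ and $\nabla v_n$ to the same H\"older class in $(x,t)$ on interior compacta, so a Schauder bootstrap yields uniform $C^{2+\alpha,1+\alpha/2}$ bounds for $u_n$ on such sets. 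By Arzel\`a--Ascoli and a diagonal argument, a further subsequence $\{n'\}$ satisfies $u_{n'}\to u^*$ in $C^{2,1}_{\rm loc}(\R^N\times(0,\infty))$, while $v_{n'}\to v^*$ locally uniformly (dominated convergence in the convolution formula), with $\Delta v^*=\lambda v^*-\mu u^*$. Passing to the limit in the equation shows $(u^*,v^*)$ solves the stated limiting system on $\R^N\times(0,\infty)$, and $0\le u^*\le M$.

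It remains to identify the initial datum. The mild form of the $u_n$-equation is
\begin{equation*}
u_n(\cdot,t)=e^{t\Delta}u_{0n}-\chi\int_0^t\nabla e^{(t-s)\Delta}\!\cdot\!\big(u_n\nabla v_n\big)(\cdot,s)\,ds+\int_0^t e^{(t-s)\Delta}\big(u_n(a_n-b_nu_n)\big)(\cdot,s)\,ds .
\end{equation*}
Using $\|\nabla e^{\tau\Delta}\|_{L^1(\R^N)}\le C\tau^{-1/2}$ and the uniform bounds above, the two integral terms are $O(t^{1/2}+t)$ in $\|\cdot\|_\infty$ uniformly in $n$, hence $\|u_n(\cdot,t)-e^{t\Delta}u_{0n}\|_\infty\le C(t^{1/2}+t)$. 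Fix $t>0$ and a compact $K\subset\R^N$. Since $u_{0n'}\to u_0$ locally uniformly with $0\le u_{0n'}\le M$, one has $e^{t\Delta}u_{0n'}\to e^{t\Delta}u_0$ uniformly on $K$; letting $n'\to\infty$ (using $u_{n'}(\cdot,t)\to u^*(\cdot,t)$ uniformly on $K$) gives $\|u^*(\cdot,t)-e^{t\Delta}u_0\|_{L^\infty(K)}\le C(t^{1/2}+t)$. As $u_0\in C^b_{\rm unif}(\R^N)$, $e^{t\Delta}u_0\to u_0$ uniformly on $\R^N$ as $t\downarrow0$; therefore $u^*(\cdot,t)\to u_0$ locally uniformly as $t\downarrow0$, so $u^*$ extends to a classical solution on $\R^N\times[0,\infty)$ with $u^*(\cdot,0)=u_0$, i.e. $u^*=u^*(\cdot,\cdot;0,u_0)$; uniqueness of this solution follows exactly as in Lemma~\ref{local-existence-lem}, now with coefficients $a^*,b^*$.

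The interior compactness step is routine, since interior $L^p$ and Schauder estimates for this system are standard. The genuinely delicate point is the last step: upgrading convergence on $\R^N\times(0,\infty)$ to convergence up to $t=0$, given that $\{u_{0n}\}$ is assumed to converge only locally uniformly and carries no uniform modulus of continuity. The Duhamel identity is what makes this work, by isolating $e^{t\Delta}u_{0n}$ as the only part depending on the initial datum and estimating the remainder uniformly in $n$.
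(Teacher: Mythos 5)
Your argument is correct and reaches the same conclusion, but it takes a genuinely different route through the compactness step. The paper's proof works entirely in the mild (Duhamel) formulation: after extracting a subsequence for $(a_n,b_n)$, it shows the map $t\mapsto u_n(\cdot,t)$ is locally uniformly H\"older continuous into a fractional power space $X^\beta$ of $\Delta - I$ by estimating the increments of each Duhamel term, then invokes Arzel\`a--Ascoli together with a parabolic regularity theorem of Friedman to obtain $C^{2,1}_{\rm loc}$ convergence, and finally identifies the limit by passing to the limit in the integral equation itself and appealing to the generalized Gronwall inequality for uniqueness. You instead obtain $C^{2,1}_{\rm loc}$ compactness from interior $W^{2,1}_p$ and Schauder estimates for the linearized parabolic operator, and you reserve the Duhamel formula for a clean $O(t^{1/2}+t)$ tail estimate that isolates $e^{t\Delta}u_{0n}$ and pins down the initial datum of the limit. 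That second step is a nice, self-contained way to handle the fact that the $u_{0n}$ carry no uniform modulus of continuity, and it arguably reads more transparently than passing to the limit in the full integral equation. One clarification is needed in your regularity bootstrap: the phrase ``the convolution formula then promotes $v_n$ and $\nabla v_n$ to the same H\"older class in $(x,t)$ on interior compacta'' is delicate, because $(\lambda I-\Delta)^{-1}$ is nonlocal, so time-H\"older regularity of $\nabla v_n$ at a point $x$ requires a time-H\"older bound on $u_n(y,\cdot)$ for all $y\in\R^N$, not just $y$ near $x$. This is fine provided you note that the interior $W^{2,1}_p$ estimates are translation-invariant (constant leading coefficients, uniformly bounded lower-order coefficients), so the resulting H\"older-in-time bound for $u_n$ holds on $\R^N\times[\delta,T]$ uniformly in $n$; with that observation, the spatial regularity already supplied by Theorem~\ref{global-existence-tm}(iii) and the convolution bound close the Schauder bootstrap. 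The paper sidesteps this point by proving equicontinuity of $t\mapsto u_n(\cdot,t)$ in $X^\beta$, which is automatically global in $x$.
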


\begin{proof} Without loss of generality, by the Arzela Ascoli's Theorem, we may suppose that $(a(x,t+t_{0n}),b(x,t+t_{0n}))\to (a^*(x,t),b^*(x,t))$ locally uniformly in $\R^N\times\R$ as $n\to \infty$.
Recall that $(u(x,t+t_{0n};t_{0n},u_{0n}),v(x,t+t_{0n};t_{0n},u_{0n}))$ satisfies for $x\in\R^N,\ t>0$,
\begin{align*}
u_t(x,t+t_{0n};t_{0n},u_{0n})&=\Delta u(x,t+t_{0n};t_{0n},u_{0n}) -\chi\nabla\cdot (u(x,t+t_{0n};t_{0n},u_{0n}) \nabla v(x,t+t_{0n};t_{0n},u_{0n}))\nonumber\\
& + (a(x,t+t_{0n})-b(x,t+t_{0n})u(x,t+t_{0n};t_{0n},u_{0n}))u(x,t+t_{0n};t_{0n},u_{0n}).
\end{align*}
So, by variation of constant formula, we have that
\begin{align}\label{cont-eq1}
u(\cdot,t+t_{0n};t_{0n},u_{0n})= & \underbrace{e^{t(\Delta-I)}u_{0n}}_{I_{0n}}(t)+ \underbrace{\int_0^te^{(t-s)(\Delta-I)}(a(\cdot,s+t_{0n})+1)u(\cdot,s+t_{0n};t_{0n},u_{0n})ds}_{I_{1n}(t)}\nonumber\\
& -\chi\underbrace{\int_{0}^{t}e^{(t-s)(\Delta-I)}\nabla\cdot(u(\cdot,s+t_{0n};t_{0n},u_{0n})\nabla v(\cdot,t+t_{0n};t_{0n},u_{0n}))ds}_{I_{2n}(t)}\nonumber\\
& -\underbrace{\int_0^t e^{(t-s)(\Delta-I)}b(\cdot,s+t_{0n})u^2(\cdot,s+t_{0n};t_{0n},u_{0n})ds}_{I_{3n}(t)}, \forall \ t>0,
\end{align}
where $\{e^{t(\Delta-I)}\}_{t\geq 0}$ denotes the analytic semigroup generated on $X^0:=C^{b}_{\rm unif}(\R^N)$ by $\Delta-I$. Let $X^{\beta}$, $\beta >0$, denote the fractional power spaces associated with $ \Delta-I$. Let $0<\beta<\frac{1}{2}$ be fixed.


There is a constant $C_{\beta}>0$, (see \cite{Dan Henry}), such that
$$
\|I_{0n}(t+h)-I_{0n}(t)\|_{X^\beta}=\leq C_{\beta}h^{\beta}t^{-\beta}\|u_{0n}\|_{\infty}\leq C_{\beta}h^{\beta}t^{-\beta}M,
$$
\begin{align*}
\|I_{1n}(t+h)-I_{1n}(t)\|_{X^{\beta}} &\leq C_{\beta}(a_{\sup}+1)M\left[h^{\beta}\int_{0}^{t}\frac{e^{-(t-s)}}{(t-s)^{\beta}}ds+\int_{t}^{t+h}\frac{e^{-(t+h-s)}}{(t+h-s)^{\beta}}ds\right]\nonumber\\
& \leq C_{\beta}(a_{\sup}+1)M\left[ h^{\beta}\Gamma(1-\beta)+\frac{h^{1-\beta}}{1-\beta}\right],
\end{align*}
and
$$
\|I_{2n}(t+h)-I_{2n}(t)\|_{X^{\beta}} \leq C_{\beta}b_{\sup}M^2\left[ h^{\beta}\Gamma(1-\beta)+\frac{h^{1-\beta}}{1-\beta}\right].
$$
It follows from \cite[Lemma 3.2]{SaSh1}  that
\begin{align*}
\|I_{3n}(t+h)-I_{3n}(t)\|_{X^{\beta}} &\leq \frac{\mu\sqrt{N}C_{\beta}M^2}{\sqrt{\lambda}}\left[h^{\beta}\int_{0}^{t}\frac{e^{-(t-s)}}{(t-s)^{\beta+\frac{1}{2}}}ds+\int_{t}^{t+h}\frac{e^{-(t+h-s)}}{(t+h-s)^{\beta+\frac{1}{2}}}ds\right]\nonumber\\
& \leq \frac{\mu\sqrt{N}C_{\beta}M^2}{\sqrt{\lambda}}\left[ h^{\beta}\Gamma(1-\beta)+\frac{h^{1-\beta}}{1-\beta}\right].
\end{align*}
Hence the function $(0,\infty)\ni t\mapsto u(\cdot,t+t_{0n};t_{0n},u_{0n})\in X^{\beta}$ is locally uniformly H\"older's continuous.  It then follows from Arzela-Ascoli Theorem and \cite[Theorem 15]{Friedman} that there is a subsequence $\{t_{0n'}\}$  of
$\{t_{0n}\}$ and a function $u\in C^{2,1}(\R^N\times (0,\infty))$ such that $u(x,t+t_{0n'};t_{0n'},u_{0n'})$ converges to $u(x,t)$ locally uniformly in $C^{2,1}(\R^N\times (0, \infty))$ as $n'\to \infty$. Furthermore, taking $v(x,t)=\mu(\lambda I-\Delta )^{-1}u(x,t)$, we have that
\begin{align*}
u_{t}(x,t)=\Delta u(x,t)-\chi\nabla\cdot (u(x,t)\nabla v(x,t))+(a^{*}(x,t)-b^{*}(x,t)u(x,t))u(x,t), \quad x\in\R^N
\end{align*}
for $t>0$.
Since $u_{0n'}(x)\to u_0(x)$ locally uniformly as $n\to \infty$, it is not hard to show from \eqref{cont-eq1} that $u(x,t)$ satisfies
\begin{align}\label{int-eq}
u(x,t)=& e^{t(\Delta-I)}u_0-\chi\int_{0}^{t}e^{(t-s)(\Delta-I)}\nabla\cdot (u(\cdot,s)\nabla v(\cdot,s))ds\nonumber\\
&+\int_0^t e^{(t-s)(\Delta-I)}((1+a^*(\cdot,s))u-b^*(\cdot,s)u^2(\cdot,s)))ds.
\end{align}
Note that $(u^*(x,t;0,u_0),v^*(x,t;0,u_0))$ also satisfies the integral equation \eqref{int-eq}. It thus follows from the Generalized  Grownwall's inequality, \cite[Lemma 2.5]{SaSh1} that $u(x,t)=u^*(x,t;0,u_0)$.
\end{proof}

\begin{lem}
\label{main-new-lm1}
Assume that (H1) holds. For every $M>0$, $\varepsilon>0$, and $T>0$,   there exist $L_0=L(M,T,\varepsilon)\gg 1$ and $\delta_0=\delta_0(M,\varepsilon)$ such that for every initial function $u_0\in C^{b}_{\rm unif}(\R^N)$ with $0\leq u_0\leq M$ and   every $L\geq L_0$,
\begin{equation}\label{Local-uniform-boun-for-u}
  u(x,t+t_0;t_0,u_0)\leq \varepsilon,\quad \forall \ 0\leq t\leq T,\ t_0\in\R, \ \forall \,\, |x|_{\infty}<2L
\end{equation}
whenever $ 0\leq u_0(x)\leq \delta_0$ for   $|x|_{\infty}<3L$.
\end{lem}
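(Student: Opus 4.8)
The plan is to dominate $u(x,t+t_0;t_0,u_0)$ pointwise by an explicit supersolution of the \emph{linear} equation that $u$ sub-solves under {\bf (H1)}, designed so that the (possibly large) exterior data $u_0$ on $\{|x|_\infty\ge 3L\}$ reaches the inner cube $\{|x|_\infty<2L\}$ only through terms that decay like $e^{-L}$, while the small interior data contributes just a spatially constant term of size $\approx\delta_0$. The underlying point is that, although \eqref{P} propagates infinitely fast, over the fixed horizon $[0,T]$ the leakage of mass from $\{|x|_\infty\ge 3L\}$ into $\{|x|_\infty<2L\}$ is exponentially small in the gap $L$.

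First I would record the a priori facts needed. Under {\bf (H1)} and $0\le u_0\le M$, Theorem~\ref{global-existence-tm} (specifically \eqref{u-upper-bound-eq2}) gives a global solution with $\|u(\cdot,t+t_0;t_0,u_0)\|_\infty\le\widetilde M:=\max\{M,\tfrac{a_{\sup}}{b_{\inf}-\chi\mu}\}$ for all $t\ge0$, $t_0\in\R$, and then \eqref{v-bound-eq2} yields the $t_0$- and $t$-uniform drift bound $\|\chi\nabla v(\cdot,t+t_0;t_0,u_0)\|_\infty\le K:=\tfrac{\chi\mu\sqrt N}{\sqrt\lambda}\widetilde M$. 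Moreover, dropping the nonpositive term $-(b_{\inf}-\chi\mu)u^2$ in \eqref{global-eq01} and using $a(x,t)\le a_{\sup}$, the function $u$ is a subsolution of the linear equation $w_t=\Delta w-\chi\nabla v\cdot\nabla w+a_{\sup}w$. Next, with $\kappa:=1+K+a_{\sup}$, I would define, for $L\ge1$ and $\delta_0\in(0,M]$,
\[
\bar u(x,t):=\delta_0\,e^{a_{\sup}t}\;+\;M\,e^{\kappa t}\sum_{i=1}^{N}\big(e^{\,x_i-3L}+e^{-x_i-3L}\big),
\]
and verify two things: (a) a one-line computation, using $\Delta e^{\pm x_i}=e^{\pm x_i}$ and $|\chi\nabla v|\le K$, shows that each summand and the term $\delta_0 e^{a_{\sup}t}$ is a supersolution of the linear equation above, hence so is $\bar u$; (b) $\bar u(\cdot,0)\ge u_0$, because on $\{|x|_\infty<3L\}$ one has $\bar u(\cdot,0)\ge\delta_0\ge u_0$, while on $\{|x|_\infty\ge3L\}$ some coordinate satisfies $|x_j|\ge3L$, so one of $e^{x_j-3L}$, $e^{-x_j-3L}$ is $\ge1$ and $\bar u(\cdot,0)\ge M\ge u_0$.

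By the comparison principle for parabolic equations, $u(x,t+t_0;t_0,u_0)\le\bar u(x,t)$ for all $x\in\R^N$, $0\le t\le T$, $t_0\in\R$. On $\{|x|_\infty<2L\}$ one has $x_i-3L<-L$ and $-x_i-3L<-L$ for every $i$, whence
\[
u(x,t+t_0;t_0,u_0)<\delta_0\,e^{a_{\sup}T}+2NM\,e^{\kappa T}e^{-L}\qquad(0\le t\le T).
\]
It then suffices to take $\delta_0:=\min\{M,\tfrac{\varepsilon}{2}e^{-a_{\sup}T}\}$ and $L_0:=\max\{1,\ln(4NMe^{\kappa T}/\varepsilon)\}$; for every $L\ge L_0$ the right-hand side is $<\varepsilon$ on $\{|x|_\infty<2L\}\times[0,T]$, which is \eqref{Local-uniform-boun-for-u}. (Here $\delta_0$ and $L_0$ depend only on $M$, $\varepsilon$, $T$, through $\widetilde M$ and hence $K,\kappa$.)

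The only step requiring real care is the legitimacy of the comparison principle on the unbounded domain $\R^N$ against the \emph{unbounded} supersolution $\bar u$: since $u$ is bounded and $\bar u$ grows only like $e^{|x|}$, the difference $\bar u-u$ is bounded below, of at most exponential growth, nonnegative at $t=0$, and a supersolution of the same linear equation with bounded coefficients — the drift $\chi\nabla v$ being bounded being exactly what {\bf (H1)} together with Theorem~\ref{global-existence-tm} and \eqref{v-bound-eq2} deliver — so $\bar u-u\ge0$ follows from a standard Phragm\'en--Lindel\"of uniqueness argument. As an alternative one could run the same estimate through Duhamel's formula using Gaussian bounds for the fundamental solution of $\partial_t-\Delta+\chi\nabla v\cdot\nabla$, but that would require heat-kernel estimates for the drift term, which the supersolution route avoids.
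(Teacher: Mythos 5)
Your proof is correct, and it takes a genuinely different route from the paper's. Both arguments begin identically, bounding $u$ by a solution (or subsolution) of the linear drift equation $w_t=\Delta w-\chi\nabla v\cdot\nabla w+a_{\sup}w$ with $\|\chi\nabla v\|_\infty\le K$ from \eqref{v-bound-eq2}. From there the paper represents the dominating solution via Friedman's fundamental solution $\Gamma$ for this operator, invokes Aronson-type Gaussian upper bounds $|\Gamma(x,t,y,\tau)|\le K_2(t-\tau)^{-N/2}e^{-\lambda_0|x-y|^2/4(t-\tau)}$ (which is why Theorem~\ref{global-existence-tm}(iii), giving the H\"older estimate on the drift coefficient $\nabla v$, is needed), and splits the convolution into an interior piece of size $O(\delta_0)$ and an exterior piece that is small once $L$ is large. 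You instead build an explicit barrier $\bar u(x,t)=\delta_0 e^{a_{\sup}t}+Me^{\kappa t}\sum_i(e^{x_i-3L}+e^{-x_i-3L})$ with $\kappa=1+K+a_{\sup}$, check directly that it is a supersolution and majorizes $u_0$ at $t=0$, and read off the same $\delta_0 e^{a_{\sup}T}+2NMe^{\kappa T}e^{-L}$ bound on the inner cube. Your route is more elementary: it uses only the $L^\infty$ bound on $\nabla v$ and avoids heat-kernel machinery entirely, and in particular does not need Theorem~\ref{global-existence-tm}(iii). The small price is that $\bar u$ is unbounded, so the comparison step requires the Phragm\'en--Lindel\"of form of the parabolic maximum principle for sub/super-solutions of exponential growth; you correctly flag this and it is standard for bounded-coefficient operators on $\R^N$ over a finite time horizon, whereas the paper stays within the bounded-function setting throughout. (Note also that your $\delta_0$ depends on $T$ as well as $M,\varepsilon$; the same is true of the paper's $\delta_0$ through the constant $K_2=K_2(\lambda_0,N,\nu,K_1,T)$, so this matches what the paper's proof actually delivers, even though the lemma's wording suggests otherwise.)
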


\begin{proof}
 It follows from \eqref{global-eq01} and comparison principle for parabolic equations that
 \begin{equation}\label{global-eq08}
 0\leq u(x,t+t_0;t_0,u_0)\leq U(x,t+t_0;t_0,u_0),\quad \forall\ x\in\mathbb{R}^{N},\,\, t\geq 0,
 \end{equation}
 where $U$ solves
 \begin{equation}\label{global-eq09}
\begin{cases} U_{t}=\Delta U -\chi\nabla v(\cdot,\cdot;t_0,u_0)\cdot\nabla U +a_{\sup}U, \quad  t>t_0\cr
U(\cdot,t_0)=u_0
 \end{cases}
 \end{equation}
  It follows from Theorem \ref{global-existence-tm} (iii) and  \cite[Theorem 12]{Friedman}  that $U((x,t_0+t;t_0,u_0))$ can be written in the form
  \begin{align}\label{d00}
  U(x,t_0+t;t_0,u_0)&=\int_{\R^N}\Gamma(x,t,y,0)u_0(y)dy .
  \end{align}
Moreover, for every $0<\lambda_0<1$,  there is a constant $K_2=K_{2}(\lambda_0,N,\nu,K_1, T)$, where $K_1$ is given by  Theorem \ref{global-existence-tm} (iii), such that
\begin{equation}\label{d01}
\left|\Gamma(x,t,y,\tau)\right|\leq K_2\frac{e^{-\frac{\lambda_0|x-y|^2}{4(t-\tau)}}}{(t-\tau)^{\frac{N}{2}}} \ \ \text{and}\ \ \left|\partial_{x_i}\Gamma(x,t,y,\tau)\right|\leq K_2\frac{e^{-\frac{\lambda_0|x-y|^2}{4(t-\tau)}}}{(t-\tau)^{\frac{N+1}{2}}}, \quad \forall\, x\in\R^N,\, \forall\, \tau\leq t\leq \tau + T.
\end{equation}
We then have
\begin{align*}
 U(x,t_0+t;t_0,u_0)&\le K_2\int_{\R^N}\frac{e^{-\frac{\lambda_0|x-y|^2}{4 t}}}{t^{\frac{N}{2}}}u_0(y)dy\\
 &=K_2\int_{R^N}e^{-\frac{\lambda_0}{4} |z|^2} u_0(x+t^{\frac{1}{2}}z)dz\\
 &\le  K_2\Big[ \int_{|z|_{\infty}\le \frac{L}{\sqrt{T}}} e^{-\frac{\lambda_0}{4} |z|^2} u_0(x+t^{\frac{1}{2}}z)dz
 +\int_{|z|_{\infty}\ge \frac{L}{\sqrt{T}}}e^{-\frac{\lambda_0}{4} |z|^2} u_0(x+t^{\frac{1}{2}}z)dz\Big].
\end{align*}
This implies that for $|x|\le 2L$,
\begin{align}\label{d02}
U(x,t_0+t;t_0,u_0)&\le K_2 \delta_0\int_{\R^N}e^{-\frac{\lambda_0}{4} |z|^2}  dz+K_2\|u_0\|_{\infty} \int_{|z|_{\infty}\ge \frac{L}{\sqrt{T}}}e^{-\frac{\lambda_0}{4} |z|^2} dz\cr
& \leq K_2\delta_0\left(\frac{4\pi}{\lambda_0}\right)^{\frac{N}{2}}+K_2M\int_{|z|_{\infty}\ge \frac{L}{\sqrt{T}}}e^{-\frac{\lambda_0}{4} |z|^2} dz.
\end{align}
Take $\delta_0=\frac{\varepsilon}{2 K_2}\left(\frac{4\pi}{\lambda_0}\right)^{-\frac{N}{2}}$ and choose $L_0\gg 1$ such that $ \int_{|z|_{\infty}\ge \frac{L_0}{\sqrt{T}}}e^{-\frac{\lambda_0}{4} |z|^2} dz<\frac{\varepsilon}{2K_2M}$, it follows from \eqref{d02} that for every $L\geq L_0$,  there holds that  $U(x,t+t_0;t_0,u_0)\leq \varepsilon$ for every $|x|_{\infty}\leq 2L$ whenever $u_0(x)\leq \delta_0$ for all $|x|_{\infty}\leq 3L$.
This combined with \eqref{global-eq08} yields the lemma.
\end{proof}

\begin{lem}\label{Main-lem2'} Suppose that {\bf (H2)} holds. Consider the sequence $(\underline{M}_{n},\overline{M}_{n})_{n\geq0}$ defined inductively by  $\underline{M}_{0}=0$ and
\begin{equation}\label{M-n def}
\overline{M}_{n}=\frac{a_{\sup}-\chi\mu\underline{M}_{n}}{b_{\inf}-\chi\mu}, \quad \text{and}\quad \underline{M}_{n+1}=\frac{a_{\inf}-\chi\mu\overline{M}_{n}}{b_{\sup}-\chi\mu}, \quad \forall \ n\geq 0.
\end{equation}
Then  for every $n\ge 0$, it holds that $$
\underline{M}_{n+1}>\underline{M}_{n}\geq 0\quad \text{and}\quad  \overline{M}_{n}>\overline{M}_{n+1}> 0.
$$ Moreover, we have that
\begin{equation*}
\lim_{n\to\infty}(\underline{M}_{n},\overline{M}_{n})=(\underline{M},\overline{M}),
\end{equation*}
where $\underline{M}$ and $\overline{M}$ are given by \eqref{attracting-rect-eq2} and \eqref{attracting-rect-eq3}, respectively.
\end{lem}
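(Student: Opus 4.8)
The plan is to decouple the two interleaved sequences into a single scalar affine recursion. Substituting the formula for $\overline{M}_n$ from \eqref{M-n def} into the one for $\underline{M}_{n+1}$ gives
\begin{equation*}
\underline{M}_{n+1}=A+B\,\underline{M}_n,\qquad A:=\frac{a_{\inf}(b_{\inf}-\chi\mu)-\chi\mu a_{\sup}}{(b_{\sup}-\chi\mu)(b_{\inf}-\chi\mu)},\qquad B:=\frac{(\chi\mu)^2}{(b_{\sup}-\chi\mu)(b_{\inf}-\chi\mu)},
\end{equation*}
with $\underline{M}_0=0$. First I would extract from {\bf (H2)} the sign facts that make everything run: since $a_{\sup}\ge a_{\inf}$, hypothesis {\bf (H2)} gives $b_{\inf}-\chi\mu>\frac{a_{\sup}}{a_{\inf}}\chi\mu\ge\chi\mu$ and $b_{\sup}\ge b_{\inf}>2\chi\mu$, whence $A>0$, $0<B<1$, and also $(b_{\sup}-\chi\mu)(b_{\inf}-\chi\mu)-(\chi\mu)^2>0$ (this is the common denominator appearing in $\underline{M}$ and $\overline{M}$).

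Next I would solve the affine recursion explicitly. Its unique fixed point is $A/(1-B)$, and a one-line computation identifies $A/(1-B)$ with the quantity $\underline{M}$ of \eqref{attracting-rect-eq2}; hence $\underline{M}_n-\underline{M}=B^n(\underline{M}_0-\underline{M})=-B^n\underline{M}$, i.e.
\begin{equation*}
\underline{M}_n=\underline{M}\,(1-B^n),\qquad n\ge0.
\end{equation*}
Since $\underline{M}>0$ and $0<B<1$, the factor $1-B^n$ is nonnegative, strictly increasing in $n$, and tends to $1$; this yields at once $\underline{M}_{n+1}>\underline{M}_n\ge0$ for every $n\ge0$ and $\underline{M}_n\to\underline{M}$.

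Finally I would transfer everything to $\overline{M}_n$ through $\overline{M}_n=(a_{\sup}-\chi\mu\underline{M}_n)/(b_{\inf}-\chi\mu)$ from \eqref{M-n def}. Because $b_{\inf}-\chi\mu>0$ and the coefficient of $\underline{M}_n$ is negative, the monotonicity of $(\underline{M}_n)$ gives $\overline{M}_n>\overline{M}_{n+1}$, and passing to the limit gives $\overline{M}_n\to(a_{\sup}-\chi\mu\underline{M})/(b_{\inf}-\chi\mu)$, which a short computation shows equals the quantity $\overline{M}$ of \eqref{attracting-rect-eq3}. Positivity $\overline{M}_{n+1}>0$ then follows, since $\underline{M}_n<\underline{M}$ forces $\overline{M}_n>\overline{M}$, while $\overline{M}=\frac{(b_{\sup}-\chi\mu)a_{\sup}-\chi\mu a_{\inf}}{(b_{\sup}-\chi\mu)(b_{\inf}-\chi\mu)-(\chi\mu)^2}>0$ because the denominator was shown to be positive and the numerator satisfies $(b_{\sup}-\chi\mu)a_{\sup}-\chi\mu a_{\inf}\ge a_{\inf}(b_{\sup}-2\chi\mu)>0$.

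There is no genuinely hard step here; the content of the lemma is entirely in the observation that the coupled recursion collapses to the scalar affine map $x\mapsto A+Bx$. The only point requiring care is verifying that the three inequalities $A>0$, $B<1$, and $\overline{M}>0$ are precisely what {\bf (H2)} buys, after which the monotonicity, nonnegativity, and the limits are immediate from the closed form $\underline{M}_n=\underline{M}(1-B^n)$ and its image under $\overline{M}_n=(a_{\sup}-\chi\mu\underline{M}_n)/(b_{\inf}-\chi\mu)$.
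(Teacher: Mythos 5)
Your proposal is correct and rests on the same key observation as the paper's proof, namely that substituting one recursion into the other collapses the coupled pair $(\underline{M}_n,\overline{M}_n)$ to the scalar affine map $\underline{M}_{n+1}=A+B\,\underline{M}_n$ (this is exactly the paper's equation \eqref{M-n-eq01}). The only difference is cosmetic: the paper argues by induction that the sequence is nonnegative, bounded, and monotone and then appeals to monotone convergence, whereas you solve the affine recursion in closed form as $\underline{M}_n=\underline{M}(1-B^n)$, which makes the monotonicity and the limit immediate and is arguably cleaner.
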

\begin{proof}
For every $n\geq 0$, it holds that
\begin{equation} \label{M-n-eq01}
\underline{M}_{n+1}=\frac{(b_{\inf}-\chi\mu)a_{\inf}-\chi\mu a_{\sup}+(\chi\mu)^2\underline{M}_{n}}{(b_{\inf}-\chi\mu)(b_{\sup}-\chi\mu)}
\end{equation}
and
\begin{equation}\label{M-n-eq02}
\overline{M}_{n+1}=\frac{(b_{\sup}-\chi\mu)a_{\sup}-\chi\mu a_{\inf}+(\chi\mu)^2\overline{M}_{n}}{(b_{\inf}-\chi\mu)(b_{\sup}-\chi\mu)}.
\end{equation}
Thus, since $\underline{M}_0= 0$, $\overline{M}_0=\frac{a_{\sup}}{b_{\inf}-\chi\mu}>0$, and {\bf (H2)} holds, it follows by mathematical induction that $\underline{M}_n\geq 0$ and $\overline{M}_n\geq 0$ for every $n\geq 0.$  Therefore, it follows from \eqref{M-n def} that
$$0\leq  \underline{M}_n\leq \frac{a_{\inf}}{b_{\sup}-\chi\mu} \quad \text{and}\quad 0\leq \overline{M}_n \leq \frac{a_{\sup}}{b_{\inf}-\chi\mu},\quad \forall\, n\geq 0.$$ Observe that $\underline{M}_{0}<\underline{M}_1$. Hence, \eqref{M-n-eq01} implies that $\underline{M}_{n}<\underline{M}_{n+1}$ for every $n\geq 0$. Similarly, we have that $\overline{M}_0>\overline{M}_{1}$. Hence \eqref{M-n-eq02} implies that $\overline{M}_{n+1}<\overline{M}_{n}$ for every $n\geq 0$. Thus the sequence $(\underline{M}_{n},\overline{M}_n)$ is convergent. By passing to limit in \eqref{M-n-eq01}  and \eqref{M-n-eq02},  it is easily seen that $ \lim_{n\to\infty}(\underline{M}_{n},\overline{M}_n)=(\underline{M},\overline{M})$, where $\underline{M}$ and $\overline{M}$ are given by \eqref{attracting-rect-eq3} and \eqref{attracting-rect-eq2} respectively.
\end{proof}

\begin{lem}
\label{main-new-lm2}
 For fixed $T>0$, there is $0<\delta_0^*(T)<M^+=\frac{a_{\sup}}{b_{\inf}-\chi\mu}+1$ such that for any $0< \delta\le \delta_0^*(T)$
 and for any $u_0$ with
$\delta\le u_0\le M^+$,
\begin{equation}
\label{aux-eq5-1}
\delta\le u(x,t_0+T;t_0,0,u_0)\le M^+\quad \forall\,\, x\in\R^N, \ \forall\ t_0\in\R.
\end{equation}
\end{lem}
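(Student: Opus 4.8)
This is immediate from Theorem~\ref{global-existence-tm}: since $\|u_0\|_{\infty}\le M^+$ and $M^+>\frac{a_{\sup}}{b_{\inf}-\chi\mu}$, \eqref{u-upper-bound-eq2} gives $u(x,t_0+t;t_0,u_0)\le\max\{\|u_0\|_{\infty},\frac{a_{\sup}}{b_{\inf}-\chi\mu}\}\le M^+$ for all $t\ge0$, $x\in\R^N$, $t_0\in\R$. So the content of the lemma is the lower bound, which is what $\delta_0^{*}(T)$ has to encode, and I would prove it by a \emph{direct} argument built around a dichotomy. Fix the target point; translating it to the origin (replacing $a,b$ by $a(\cdot+x_0,\cdot),b(\cdot+x_0,\cdot)$, which still satisfy {\bf (H)} with the same constants) we may take it to be $0$. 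Choose once and for all $\varepsilon_0\in(0,M^+)$ with $\kappa_0:=a_{\inf}-(b_{\sup}+\chi\mu)\varepsilon_0>0$ and set $\gamma_0:=\max\{b_{\sup}M^+-a_{\inf},0\}$. All constants below are uniform in $u_0,t_0$: the solution stays in $[0,M^+]$, so by Lemma~\ref{bounds-on-v} the drift obeys $\|\chi\nabla v\|_{\infty}\le D_0:=\frac{\chi\mu\sqrt N}{\sqrt\lambda}M^+$; by Theorem~\ref{global-existence-tm}(iii) $v$ is bounded in $C^{1,\nu}_{\rm unif}(\R^N)$, so by interior parabolic estimates $u(\cdot,t_0+s;t_0,u_0)$ is uniformly H\"older on $\R^N$ for $s\ge s_0$, for each fixed $s_0>0$; and, exactly as in the proof of Lemma~\ref{main-new-lm1}, the linear supersolution $U$ of \eqref{global-eq01}--\eqref{global-eq09} has a kernel obeying \eqref{d01}.

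Fix a small $s_0\in(0,T)$ and apply the dichotomy to $u(\cdot,t_0+s_0;t_0,u_0)$, which by Lemma~\ref{main-lem1} satisfies $\delta\,e^{-C_0s_0}\le u(\cdot,t_0+s_0;t_0,u_0)\le M^+$ with $C_0:=b_{\sup}M^+e^{Ta_{\sup}}-a_{\inf}$. \textbf{Case 1:} $u(\cdot,t_0+s_0)\le\delta_0$ on a large cube $\{|x|_{\infty}<L_*\}$, where $\delta_0=\delta_0(M^+,\varepsilon_0)$ and $L_*$ is taken large. By Lemma~\ref{main-new-lm1} (applied with initial time $t_0+s_0$, $M=M^+$, $\varepsilon=\varepsilon_0$, using the cocycle property) one gets $u(\cdot,t_0+t;t_0,u_0)\le\varepsilon_0$ on a slightly smaller cube for $t\in[s_0,T]$; since $v=\mu(\lambda-\Delta)^{-1}u$ and the kernel of $(\lambda-\Delta)^{-1}$ decays exponentially, enlarging $L_*$ forces $v\le\frac{2\mu\varepsilon_0}{\lambda}$ on a ball $B(0,L)$ for $t\in[s_0,T]$. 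Hence on $B(0,L)\times(t_0+s_0,t_0+T]$ the reaction coefficient is $\ge a_{\inf}-(b_{\sup}-\chi\mu)\varepsilon_0-2\chi\mu\varepsilon_0=\kappa_0>0$, so $u_t\ge\Delta u-\chi\nabla v\cdot\nabla u+\kappa_0u$ there. Comparing $u$ with $\delta\,e^{-C_0s_0}e^{\kappa_0(t-t_0-s_0)}\psi(x,t)$, where $\psi$ solves $\psi_t=\Delta\psi-\chi\nabla v\cdot\nabla\psi$ on $B(0,L)\times(t_0+s_0,t_0+T]$ with $\psi\equiv1$ at $t=t_0+s_0$ and $\psi=0$ on $\partial B(0,L)$ (legitimate since $u(\cdot,t_0+s_0)\ge\delta e^{-C_0s_0}$ on $B(0,L)$ and $u\ge0$ on the lateral boundary), the parabolic comparison principle yields $u(0,t_0+T;t_0,u_0)\ge\delta\,e^{-C_0s_0+\kappa_0(T-s_0)}\psi(0,t_0+T)$. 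Since $\psi(0,t_0+T)\to1$ as $L\to\infty$ (bounded drift $D_0$, fixed $T,s_0$) and $e^{-C_0s_0+\kappa_0(T-s_0)}\to e^{\kappa_0T}>1$ as $s_0\to0$, choosing first $s_0$ small and then $L$ (hence $L_*$) large makes the right-hand side $\ge\delta$.

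\textbf{Case 2:} $u(\bar x,t_0+s_0)>\delta_0$ for some $|\bar x|_{\infty}<L_*$. Now the uniform H\"older bound for $u(\cdot,t_0+s_0;t_0,u_0)$ upgrades this pointwise bound to $u(\cdot,t_0+s_0;t_0,u_0)\ge\delta_0/2$ on a ball $B(\bar x,r_0)$ of a definite radius $r_0>0$ (uniform in $u_0,t_0$), whence $\int_{\{|x|_{\infty}<L_*+r_0\}}u(x,t_0+s_0;t_0,u_0)\,dx\ge\frac{\delta_0}{2}|B(0,r_0)|=:\eta_0>0$. From \eqref{d-001}, $e^{\gamma_0t}u$ is a supersolution of $\partial_t-\Delta+\chi\nabla v\cdot\nabla$, so $u(0,t_0+T;t_0,u_0)\ge e^{-\gamma_0(T-s_0)}\int_{\R^N}p(y)\,u(y,t_0+s_0;t_0,u_0)\,dy$, where $p$ is the fundamental solution of that drift--diffusion on $\R^N\times[t_0+s_0,t_0+T]$ with base point $0$; the classical Gaussian lower bound (drift $\le D_0$) gives $p\ge c>0$ on $\{|y|_{\infty}<L_*+r_0\}$, so $u(0,t_0+T;t_0,u_0)\ge e^{-\gamma_0(T-s_0)}c\,\eta_0=:m_0>0$. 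Setting $\delta_0^{*}(T):=\min\{\varepsilon_0,m_0\}\in(0,M^+)$ (shrunk, if necessary, to make the smallness requirements of Case~1 compatible), Case~1 gives $u(0,t_0+T)\ge\delta$ and Case~2 gives $u(0,t_0+T)\ge m_0\ge\delta$; as $0$ and $t_0$ were arbitrary, the lemma follows.

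\textbf{Main obstacle.} The delicate step is the parameter bookkeeping in Case~1: one must run \emph{simultaneously} the upper estimate keeping $u$ --- hence, through the exponential decay of $(\lambda-\Delta)^{-1}$, also $v$ --- below $\varepsilon_0$ near $0$ on $[s_0,T]$, and the lower comparison on the Dirichlet ball $B(0,L)$, and then verify that the reaction-driven amplification $e^{\kappa_0(T-s_0)}$ outweighs both the short-time loss $e^{-C_0s_0}$ coming from Lemma~\ref{main-lem1} and the localization loss $\psi(0,t_0+T)<1$. This closes \emph{only} because $T>0$ is fixed: with $T$ frozen, $\kappa_0T$ is a fixed positive number, so $s_0$ and $1-\psi(0,t_0+T)$ can each be driven to $0$. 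The remaining ingredients --- the conversion ``$u$ small $\Rightarrow v$ small'', the uniform interior H\"older estimate that turns a pointwise lower bound into an $L^1$ one in Case~2, and Gaussian lower bounds for a drift--diffusion with merely bounded drift --- are standard.
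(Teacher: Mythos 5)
Your proof is correct, and it takes a genuinely different route from the paper's. Both arguments pivot on a ``small versus not small'' dichotomy, but they set it up and resolve it differently. The paper argues by contradiction and compactness: it assumes a sequence of counterexamples $(\delta_n,t_{0n},u_{0n},x_n)$ with $\delta_n\to 0$ and splits on whether the Lebesgue measure of $\{x\in D_{3L}: u_{0n}(x+x_n)>\delta_0/2\}$ tends to zero; in its Case 1 it transfers a principal-Dirichlet-eigenvalue amplification (Steps 1--3) from an $L^p$-approximating datum via a Gronwall-type $L^p$ continuous-dependence estimate, and in its Case 2 it uses a heat-semigroup eigenfunction expansion on $D_{3L}$ together with the open-compact-topology compactness of Lemma~\ref{continuity with respect to open compact topology} and the strong maximum principle for the limit solution. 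You instead argue directly: after a short burn-in time $s_0$ you split on whether $u(\cdot,t_0+s_0)$ is pointwise below $\delta_0$ on a large cube. Your Case 1 replaces the principal-eigenfunction subsolution $\kappa\phi_L e^{\sigma_L t}$ by the translation-invariant envelope $\delta e^{-C_0 s_0}e^{\kappa_0 t}\psi$, with $\psi$ a Dirichlet exit-time factor on $B(0,L)$ that tends to $1$ as $L\to\infty$ uniformly over bounded drifts; your Case 2 replaces the compactness/maximum-principle step by the uniform interior H\"older estimate (turning one pointwise lower bound into a definite $L^1$ mass) plus Aronson-type Gaussian lower bounds for the drift--diffusion kernel. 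What your version buys is that it is quantitative and elementary, avoiding the $L^p$ perturbation and the limit-solution machinery altogether; what it costs is the $s_0$-versus-$L$ bookkeeping you flag yourself, which closes only because $T$ is fixed, so the fixed positive margin $\kappa_0 T$ absorbs both the short-time loss $e^{-C_0 s_0}$ and the localization loss $1-\psi(0,t_0+T)$. When writing this up, the one point worth making explicit is that both the exit-time factor $\psi(0,t_0+T)$ and the Gaussian kernel lower bound must be shown to be uniform over all drifts with $\|q\|_\infty\le D_0$ and all shifted coefficients $a(\cdot+x_0,\cdot),b(\cdot+x_0,\cdot)$; this uniformity is exactly what makes $\delta_0^*(T)$ independent of $u_0$, $t_0$ and the target point, which is the whole content of the lemma.
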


\begin{proof}
 We divide the proof into four steps.

 First of all,  let $a_0=\frac{a_{\inf}}{3}$ and
$$D_L=\{x\in\R^N\,|\, |x_i|<L\quad {\rm for}\,\,\, i=1,2,\cdots,N\}.
$$
 Consider
\begin{equation}
\label{aux-eq1-1}
\begin{cases}
u_t=\Delta u+a_0 u,\quad x\in D_L\cr
u=0,\quad x\in\partial  D_L,
\end{cases}
\end{equation}
and its associated eigenvalue problem
\begin{equation}
\label{aux-eq1-2}
\begin{cases}
\Delta u+a_0 u=\sigma u,\quad x\in D_L\cr
u=0,\quad x\in\partial D_L.
\end{cases}
\end{equation}
Let $\sigma_{_{L}}$ be the principal eigenvalue of \eqref{aux-eq1-2} and $\phi_L(x)$ be its principal eigenfunction
with $\phi_L(0)=1$. Note that
$$\phi_L(x)=\Pi_{i=1}^N \cos\big(\frac{\pi}{2L}x_i\big)
\quad {\rm and}\quad 0<\phi_L(x)\le \phi_L(0),\quad \forall x\in D_L.
$$
Note also that $u(x,t)=e^{\sigma_{_{L}} t}\phi_L(x)$ is a solution of \eqref{aux-eq1-1}.
Let $u(x,t;u_0)$ be the solution of \eqref{aux-eq1-1} with $u_0\in C(\bar D_L)$.
Then
\begin{equation}
\label{aux-eq1-3}
u(x,t;\kappa \phi_L)=\kappa e^{\sigma_{_{L}} t}\phi_L(x)
\end{equation}
for all $\kappa\in\R$.

In the following, let $L_0\gg 0$ be such that
$\sigma_{_{L}}>0 \quad \forall L\ge  L_0.$

\medskip

\noindent {\bf Step 1.} Let $T>0$ be fixed. Consider
\begin{equation}
\label{aux-eq2-1}
\begin{cases}
u_t=\Delta u+b_\epsilon(x,t) \cdot \nabla u+a_0  u,\quad x\in D_L\cr
u=0,\quad x\in\partial D_L,
\end{cases}
\end{equation}
where $|b_\epsilon(x,t)|<\epsilon$ for $x\in \bar D_L$ and
$t_0\le t\le t_0+T$.
Let $u_{b_\epsilon,L}(x,t
;t_0,u_0)$ be the solution of \eqref{aux-eq2-1} with $u_{b_\epsilon,L}(x,t_0;t_0,u_0)=u_0(x)$.

We claim that {\it there is $\epsilon_0(T)>0$ such that for any $L\ge L_0$, $\kappa>0$, and $0\le \epsilon\le \epsilon_0(T)$,
\begin{equation}
\label{aux-eq2-2}
u_{b_\epsilon,L}(0,t_0+T;t_0,\kappa \phi_L)\geq e^{\frac{T\sigma_{{L_{0}}}}{2}}\kappa>\kappa
\end{equation}
provided that $|b_\epsilon(x,t)|<\epsilon$ for $x\in D_L$; and for any
$L\ge L_0$ and $0\le \epsilon\le \epsilon_0(T)$,
\begin{equation}
\label{aux-eq2-2-0}
0\le u_{b_\epsilon,L}(x,t+t_0;t_0,\kappa \phi_L)\le e^{a_0 t}\kappa \quad \forall \,\, 0\le t\le T,\,\, x\in D_L.
\end{equation}}

In fact, by \eqref{aux-eq1-3},  there is $\epsilon_0(T)>0$ such that for any $0\le\epsilon\le\epsilon_0(T)$,
\begin{equation}
\label{aux-eq2-2-1}
u_{b_\epsilon,L_0}(0,t_0+T;t_0,\kappa \phi_{L_0})> e^{\frac{T\sigma_{L_0}}{2}}\kappa
\end{equation}
provided that  $|b_\epsilon(x,t)|<\epsilon$ for $x\in D_{L_0}$.
Note that for $L\ge L_0$,
$$
\phi_L(x)\ge \phi_{L_0}(x)\quad \forall\,\, x\in D_{L_0}.
$$
and
$$
\begin{cases}
\partial_t u_{b_\epsilon,L}(\cdot,t_0+t; t_0,\kappa\phi_L)=\Delta u_{b_\epsilon,L}(x,t_0+t; t_0,\kappa\phi_L)+b_\epsilon(x,t) \cdot \nabla u_{b_\epsilon,L}(x,t_0+t; t_0,\kappa\phi_L)\cr
\qquad \qquad \qquad \qquad \qquad \qquad+a_0  u_{b_\epsilon,L}(x,t_0+t; t_0,\kappa\phi_L),\quad x\in D_{L_0},\cr
u_{b_\epsilon,L}(x,t_0+t; t_0,\kappa\phi_L)>0,\quad x\in\partial D_{L_0}.
\end{cases}
$$
Then by comparison principle for parabolic equations,
\begin{align*}
u_{b_\epsilon,L}(x,t_0+t; t_0,\kappa\phi_L)\ge u_{b_\epsilon,L_0}(x,t_0+t;t_0,\kappa\phi_{L_0})
\end{align*}
for $x\in D_{L_0}$,
which together with  \eqref{aux-eq2-2-1} implies \eqref{aux-eq2-2}.
\eqref{aux-eq2-2-0} follows directly from  comparison principle for parabolic equations.

\medskip

\noindent {\bf Step 2.} Consider
\begin{equation}
\label{aux-eq3-1}
\begin{cases}
u_t=\Delta u+b_\epsilon(x,t)\cdot \nabla u+ u(2 a_0-c(x,t) u),\quad x\in D_L\cr
u=0,\quad x\in \partial D_L,
\end{cases}
\end{equation}
where $0\le c(x,t)\le b_{\sup}$. Let $u(x,t;t_0,u_0)$ be the solution of \eqref{aux-eq3-1} with
$u_{\varepsilon}(x,t_0;t_0,u_0)=u_0(x)$.
Assume $L\ge L_0$ and $0\le\epsilon\le\epsilon_0(T)$.

 We claim that {\it
\begin{equation}
\label{aux-eq3-2}
u_{\varepsilon}(0,t_0+T;t_0,\kappa\phi_L)\ge  e^{\frac{T\sigma_{L_0}}{2}}\kappa
\end{equation}
provided that $0<\kappa \le \kappa_0(T):=\frac{a_0e^{-a_0T}}{ b_{\sup}}$.}

\smallskip

Note that \eqref{aux-eq2-2-0} yields,
\begin{align*}
&\partial_t u_{b_\epsilon,L}(x,t;t_0,\kappa \phi_L)-\Delta u_{b_\epsilon,L}(x,t;t_0,\kappa \phi_L)-b_{\epsilon}(x,t)\cdot\nabla u_{b_\epsilon,L}(x,t;t_0,\kappa\phi_L)\\
& -u_{b_\epsilon,L}(x,t;t_0,\kappa\phi_L)\big(2 a_0-c(x,t) u_{b_\epsilon,L}(x,t;t_0,\kappa \phi_L)\big)\\
&=-u_{b_\epsilon,L}(x,t;t_0,\kappa\phi_L)\big(a_0-c(x,t)u_{b_\epsilon,L}(x,t;t_0,\kappa \phi_L)\big)\\
&\le 0\quad {\rm for}\quad t_0\le t\le t_0+T,\quad x\in D_L
\end{align*}
when $0<\kappa \le \frac{a_0e^{-a_0T}}{ b_{\sup}}$. Then by comparison principal for parabolic equations,
$$
u_{\varepsilon}(x,t;t_0,\kappa \phi_L)\ge u_{b_\epsilon,L}(x,t;t_0,\kappa \phi_L)\quad {\rm for}\quad t_0\le t\le t_0+T,\quad x\in D_L.
$$
This together with \eqref{aux-eq2-2} implies \eqref{aux-eq3-2}.

\medskip

\noindent {\bf Step 3.} For any given $x_0\in\R^N$, consider
\begin{equation}
\label{aux-eq4-1}
u_t=\Delta u-\chi \nabla v\cdot \nabla u+u(a(x+x_0,t)-\chi\lambda v(x,t;t_0,x_0,u_0)-(b(x+x_0,t)-\chi\mu) u),\quad x\in\R^N,
\end{equation}
where $v(x,t;t_0,x_0,u_0)$ is the solution of
$$
0=\Delta v-\lambda v+\mu u,\quad x\in\R^N.
$$
Let $u(x,t;t_0,x_0,u_0)$ be the solution of \eqref{aux-eq4-1} with $u(x,t_0;t_0,x_0,u_0)=u_0(x)$.
Let $\epsilon_0(T)>0$ and $\kappa_0(T)>0$ be as in Steps 1 and 2, respectively.

We claim that
{\it  there is $0<\delta_0(T)\le \kappa_0(T)$ such that for any $u_0\in C_{\rm unif}^b(\R^N)$ with
$0\leq u_0\leq M^+$ and $u_0(x)<\delta_0(T)$ for $|x_i|\le 3L$, $i=1,2,\cdots,N$,
\begin{equation}
\label{aux-eq4-2}
0\le \lambda  v(x,t;t_0,x_0,u_0)\le \frac{a_0}{2\chi},\ \ | \nabla v(x,t;t_0,x_0,u_0)|<\frac{\epsilon_0(T)}{2\chi} \ \ {\rm for}\ \ t_0\le t\le t_0+T,\ x\in D_L,\ x_0\in\R^N
\end{equation}
provided that $L\gg 1$}.

Indeed, let $0<\varepsilon\leq \varepsilon_0(T)$ be fixed.  Lemma \ref{main-new-lm1} implies that there is $\delta_{1}=\delta_1(M^+,\varepsilon)$ and $L_1=L_1(M^+,T,\varepsilon)>L_0$ such that for every $L\geq L_1$, there holds
\begin{equation}\label{aux-eq4-2-0}
u(x,t+t_0;t_0,x_0,u_0)\leq \varepsilon,\quad \forall \ 0\leq t\leq T,\ t_0\in\R, \ \forall x_0\in\R^N,\ \forall |x|\in D_{2L}
\end{equation}
whenever $0\leq u_0(x)\leq \delta_1, \ \forall\  x\in D_{3L}$.

Next, note that
 \begin{equation*}
v(x,t_0+t;t_0,x_0,u_0)=\mu \int_{0}^{\infty}\int_{\R^N}\frac{e^{-\lambda s}}{(4\pi s)^{\frac{N}{2}}}e^{-\frac{|x-z|^{2}}{4s}}u(z,t;t_0,x_0,u_0)dzds
\end{equation*}
and \begin{equation*}
\partial_{x_i}v(x,t_0+t;t_0,x_0,u_0)=\mu \int_{0}^{\infty}\int_{\R^N}\frac{(z_i-x_i)e^{-\lambda s}}{2s(4\pi s)^{\frac{N}{2}}}e^{-\frac{|x-z|^{2}}{4s}}u(z,t;t_0,x_0,u_0)dzds.
\end{equation*}
Hence, by \eqref{aux-eq4-2-0}, for $L\geq L_1$, $0\leq t\leq T$, and  $|x|_{\infty}< L$,  we have
\begin{align}\label{aux-eq4-2-00}
v(x,t_0+t;t_0,x_0,u_0)& \leq\frac{ \mu}{\pi^{\frac{N}{2}}} \left[\int_{0}^{L}\int_{|z|_{\infty}\leq \frac{L}{2\sqrt{T}}}e^{-\lambda s}e^{-|z|^2}dzds\right]\sup_{0\leq t\leq T, |z|_{\infty}\leq 2L}u(z,t+t_0;t_0,x_0,u_0) \cr
& + \frac{\mu a_{\sup}}{(b_{\inf}-\chi\mu)\pi^{\frac{N}{2}}} \int\int_{s\geq L\, \text{or}\, |z|_{\infty}\geq \frac{L}{2\sqrt{T}}}e^{-\lambda s}e^{-|z|^2}dzds\cr
& \leq \frac{\mu}{\lambda }\varepsilon + \frac{\mu a_{\sup}}{(b_{\inf}-\chi\mu)\pi^{\frac{N}{2}}} \int\int_{s\geq L\, \text{or}\, |z|_{\infty}\geq \frac{L}{2\sqrt{T}}}e^{-\lambda s}e^{-|z|^2}dzds
\end{align}
and
\begin{align}
\label{aux-eq4-2-001}
|\partial_{x_i} v(x,t_0+t;t_0,0,u_0)
|& \leq\frac{\varepsilon\mu}{\pi^{\frac{N}{2}}} \int_{0}^{L}\int_{|z|_{\infty}\leq \frac{L}{2\sqrt{T}}}\frac{|z_i|e^{-\lambda s}e^{-|z|^2}}{\sqrt{s}}dzds \cr
& + \frac{\mu a_{\sup}}{(b_{\inf}-\chi\mu)\pi^{\frac{N}{2}}} \int\int_{s\geq L\, \text{or}\, |z|_{\infty}\geq \frac{L}{2\sqrt{T}}}\frac{|z_i|e^{-\lambda s}e^{-|z|^2}}{\sqrt{s}}dzds\cr
& \leq \frac{ \varepsilon\mu}{\sqrt{\lambda} }+ \frac{\mu a_{\sup}}{(b_{\inf}-\chi\mu)\pi^{\frac{N}{2}}} \int\int_{s\geq L\, \text{or}\, |z|_{\infty}\geq \frac{L}{2\sqrt{T}}}\frac{|z_i|e^{-\lambda s}e^{-|z|^2}}{\sqrt{s}}dzds,\cr
\end{align}
whenever $ 0\leq u_0(x)\leq \delta_1$ for every $|x|_{\infty}\leq 3L$. These together with \eqref{aux-eq4-2-0} implies \eqref{aux-eq4-2}.

Note that
$$
\begin{cases}
u_t\ge \Delta u-\chi\nabla v\cdot\nabla u+u(2 a_0-(b(x+x_0,t)-\chi\mu)u),\quad x\in D_L\cr
u(x,t;t_0,x_0,u_0)>0,\quad x\in \partial D_L.
\end{cases}
$$
Let $\kappa=\inf_{x\in D_L}u_0(x)$.  Then $\kappa\le \delta_0(T) \le\kappa_0(T)$. By comparison principle for parabolic equations,
$$
u(x,t;t_0,x_0,u_0)\ge u_\epsilon (t,x;t_0,\kappa \phi_L)\quad {\rm for}\quad x\in D_L,\quad t_0\le t\le t_0+T.
$$
This together with  the conclusion in Step 2
\begin{equation}\label{n-n0}
u(0,T+t_0;t_0,x_0,u_0)\ge  e^{\frac{T\sigma_{L_0}}{2}}\kappa =e^{\frac{T\sigma_{L_0}}{2}}\inf_{x\in D_L} u_0(x).
\end{equation}

\medskip

\noindent {\bf Step 4.} In this step we claim that {\it there is $0< \delta_0^*(T)< \min\{\delta_0(T),M^+\}$, where $M^+=\frac{a_{\sup}}{b_{\inf}-\chi\mu}$, such that for any $0<\delta\le  \delta_0^*(T)$
 and for any $u_0$ with
$\delta\le u_0\le M^+$,
\begin{equation}
\label{aux-eq5-1}
\delta\le u(x,t_0+T;t_0,0,u_0)\le M^+\quad \forall\,\, x\in\R^N.
\end{equation}}

Assume that the claim does not hold. Then there are $\delta_n\to 0$, $t_{0n}\in\R$, $u_{0n}$ with
$\delta_n\le u_{0n}\le M^+$, and $x_n\in\R^N$ such that
\begin{equation}
\label{aux-eq5-2}
u(x_n,t_{0n}+T;t_{0n},0,u_{0n})<\delta_n.
\end{equation}
Note that
$$
u(x+x_n,t;t_{0n},0,u_{0n})=u(x,t;t_{0n},x_n,u_{0n}(\cdot+x_n)).
$$
Let  $\epsilon_0:=\epsilon(T)>0$, $\delta_0:=\delta_0(T)>0$, and $\kappa_0:=\kappa_0(T)>0$ be fixed and be such that the conclusions in  Steps 2-3 hold.
Let
$$
D_{0n}=\{x\in\R^N\,|\, |x_i|< 3L,\,\, u_{0n}(x+x_n)> \frac{\delta_0}{2}\}.
$$
Without loss of generality, we may assume that $\lim_{n\to\infty} |D_{0n}|$ exists.

\medskip

\noindent {\bf Case 1.} $\lim_{n\to\infty} |D_{0n}|=0.$ We claim that {\it in this case, $|\chi\nabla v(x+x_n,t+t_{0n};t_{0n},0,u_{0n})|<\epsilon_0$ and $0\leq v(x+x_n,t+t_{0n};t_{0n},0,u_{0n})\leq a_0$ for $|x_i|\le L$, $i=1,2,\cdots,N$,  $L\gg 1$ and $n\gg 1$.}
{

Indeed, let $\{\tilde{u}_{0n}\}_{n\geq 1}$ be sequence of elements of   $ C^{b}_{\rm unif}(\R^N)$ satisfying
$$
\begin{cases}
\delta_n\leq \tilde{u}_{0n}(x)\leq \frac{\delta_{0}}{2}, \quad x\in  D_{3L}\ \  \text{and}\cr
\|\tilde{u}_{0n}(\cdot)-u_{0n}(\cdot+x_n)\|_{L^{p}(\R^N)}\to 0, \quad \forall p>1.
\end{cases}
$$
Let $w_{n}(x,t):=u(t+t_{0n},x;t_{0n},x_{n},u_{0n}(\cdot+x_{n}))-u(t+t_{0n},x;t_{0n},x_{n},\tilde{u}_{0n})$ and $v_{n}(x,t):=v(t+t_{0n},x;t_{0n},x_{n},u_{0n}(\cdot+x_{n}))-v(t+t_{0n},x;t_{0n},x_{n},\tilde{u}_{0n})$. Hence $\{(w_{n},v_{n})\}_{n\geq}$ satisfies
\begin{equation}\label{aux-eq5-3}
\begin{cases}
\partial_{t}w_{n}=\Delta w_{n} +b_{n}(t,x)\cdot\nabla w_{n} + f_{n}(t,x)w_n +g_{n}(t,x)v_{n}+h_{n}\cdot\nabla v_{n}, \quad x\in\R^N, t>0\cr
0=\Delta v_n-\lambda v_{n}+\mu w_{n}, \quad  \quad x\in\R^N, t>0\cr
w_{n}(0,x)=u_{0n}(x+x_n)-\tilde{u}_{0n}(x)  , \quad x\in\R^N,
\end{cases}
\end{equation}
where $ b_{n}(t,x)=-\chi\nabla v(t+t_{0n},x+x_n;t_{0n},x_n, u_{0n}(\cdot+x_{n}))$,
$ g_{n}(t,x):=-\chi\lambda u(t+t_{0n},x+x_n;t_{0n},x_{n},\tilde u_{0n})$,  $h_{n}(t,x):=-\chi\nabla u(t+t_{0n},x+x_n;t_{0n},x_{n}, \tilde u_{0n})$, and
\begin{align*}
f_{n}( t-t_{0n},x-x_{n}):= & a(t,x)-\chi\lambda v(t,x;t_{0},x_n,u_{0n}(\cdot+x_n))\nonumber \cr
& -(b(t,x)-\chi\mu)(u(t,x;t_{0},x_n,u_{0n}(\cdot+x_n))+u(t,x;t_{0},x_n,\tilde u_{0n})).
\end{align*}
For $w_{0n}(0,\cdot)\in L^p(\R^N)$, \eqref{aux-eq5-3} has a unique solution $w(t,x;w_{0n})$
with $w(0,x;w_{0n})=w_{0n}(x)$ in $L^p(\R^N)$. Note that $\nabla \cdot (w_{n} b_{n})= b_{n}\cdot\nabla w_{n}+w_{n}\nabla\cdot b_{n}$ and $\nabla \cdot b_{n}=-\chi (\lambda v -\mu u)(t+t_{0n},x;t_{0n},x_{n},u_{0n}(\cdot+x_{n}))$.  Hence
$$
\partial_{t}w_{n}=\Delta w_{n}+\nabla\cdot(w_n b_n)  + (f_{n}(t,x)-\nabla \cdot b_n)w_n +g_{n}(t,x)v_{n}+h_{n}\cdot \nabla v_{n}, \quad x\in\R^N, t>0.
$$
Thus, the variation of constant formula yields that
\begin{align}\label{aux-eq5-4}
w_n(t,\cdot)&=e^{t(\Delta-I)}w_n(0)+\underbrace{\int_{0}^{t}e^{(t-s)(\Delta-I)}\nabla\cdot (w_{n}(s,\cdot) b_{n}(s,\cdot))ds}_{I_1}\nonumber\cr
&+\underbrace{\int_{0}^{t}e^{(t-s)(\Delta-I)}((1+f_n(s,\cdot)-\nabla \cdot b_n(s,\cdot))w_n(s,\cdot)+g_{n}(s,\cdot)v_{n}(s,\cdot)+h_{n}\cdot \nabla v_{n})ds}_{I_2},
\end{align}
where $\{e^{t(\Delta-I)}\}_{t\geq 0}$ denotes the $C_0-$semigroup on $L^{p}(\R^N)$ generated by $\Delta-I$.

Observe that  $\|b_{n}(t,\cdot)\|_{\infty}\leq \mu\|u(t+t_{0n},x+x_n;t_{0n}, u_{0n}(\cdot+x_{n}))\|_{\infty}\leq \mu\frac{a_{\sup}}{b_{\inf}-\chi\mu}$. Hence, by \cite[Lemma 3.1]{SaSh1}, we have
\begin{align*}
\|I_1\|_{L^{p}(\R^N)}&\leq C\int_{0}^{t}(t-s)^{-\frac{1}{2}}e^{-(t-s)}\|\nabla b_{n}(s,\cdot)\|_{\infty}\|w_{n}(s,\cdot)\|_{L^{p}(\R^N)}ds\cr
& \leq\frac{C\mu a_{\sup}}{b_{\inf}-\chi\mu}\int_{0}^{t}(t-s)^{-\frac{1}{2}}e^{-(t-s)}\|w_{n}(s,\cdot)\|_{L^{p}(\R^N)}ds.
\end{align*}
We also observe that $\sup_{0\leq t\leq T, n\geq 1}\|1+f_{n}(t,\cdot)-\nabla\cdot b_{n}(t,\cdot)\|_{\infty} <\infty$, $
\sup_{0\leq t\leq T, n\geq 1}\|g_{n}(t,\cdot)\|_{\infty}<\infty$, and $ \sup_{0\leq t\leq T, n\geq 1}\|h_{n}(t,\cdot)\|_{\infty}<\infty$, thus we have
\begin{align*}
\|I_{2}\|_{L^p(\R^N)}& \leq C\int_{0}^{t}e^{-(t-s)}\left\{\|w_{n}(s,\cdot)\|_{L^p(\R^N)}+\|v_{n}(s,\cdot)\|_{W^{1,p}(\R^N)}\right\}ds.
\end{align*}
Since $(\Delta-\lambda I)v_{n}=-\mu w_{n}$, then by elliptic regularity, we have that
\begin{equation*}
\|v_{n}(t,\cdot)\|_{W^{2,p}(\R^N)}\leq C\|w_{n}(t,\cdot)\|_{L^{p}(\R^N)}.
\end{equation*}
Hence, since $\|e^{t(\Delta-I)}w_{n}(0,\cdot)\|_{L^p(\R^N)}\leq e^{-t}\|w_{n}(0,\cdot)\|_{L^p(\R^N)}$, we obtain
$$
\|w_{n}(t,\cdot)\|_{L^{p}(\R^N)}\leq \|w_{n}(0)\|_{L^{p}(\R^N)}+C\int_{0}^{t}(t-s)^{-\frac{1}{2}}\| w_{n}(s,\cdot)\|_{L^p(\R^N)}ds
$$
for some constant $C>0$. Therefore it follows from Generalized Gronwall's  inequality  (see Lemma 2.5 \cite{SaSh1}) that
$$
\|w_{n}(t,\cdot)\|_{L^{p}(\R^N)}\leq C_{T}\|w_{n}(0,\cdot)\|_{L^{p}(\R^N)}, \quad \forall \, 0\leq t\leq T, \ \forall\,  n\geq 1,
$$
where $C_{T}>0$ is a constant. Thus
\begin{equation}\label{aux-eq5-5}
\lim_{n\to\infty}\sup_{0\leq t\leq T}\|w_{n}(t,\cdot)\|_{L^{p}(\R^N)}=0.
\end{equation}

For $p>N$, by regularity and a priori estimates for elliptic operators, there is a constant $C>0$ such that
$$
\|(\Delta-\lambda I)^{-1}w\|_{C^{1,b}_{\rm unif}(\R^N)}\leq C\|w\|_{L^{p}(\R^N)}, \quad \forall w\in L^{p}(\R^N).
$$
Combining this with \eqref{aux-eq5-5} we have that
\begin{equation}\label{aux-eq5-6}
\lim_{n\to\infty}\sup_{0\leq t\leq T}\|v_{n}(t,\cdot)\|_{C^{1,b}_{\rm unif}(\R^N)}=0.
\end{equation}
It follows from  the claim in Step 3 that for every $n\geq 1,$
$$
0\leq\lambda v(t+t_{0n},x;t_{0n},x_n,\tilde u_{0n})\leq \frac{a_0}{2\chi},\, \text{}\,\, |\chi \nabla v(t+t_{0n},x;t_{0n},x_n,\tilde u_{0n})|\leq \frac{\varepsilon_0}{2},\quad \forall 0\leq t\leq T, \, x\in D_{L}.
$$
Thus \eqref{aux-eq5-6} implies that, for $n\gg 1$, $ \forall 0\leq t\leq T, \, x\in D_{L}$, there holds
$$
0\leq \chi\lambda v(t+t_{0n},x;t_{0n},x_n, u_{0n}(\cdot+x_n))\leq a_0,\,\, |\chi \nabla vt+t_{0n},x;t_{0n},x_n, u_{0n}(\cdot+x_n))|\leq \varepsilon_0.
$$
Hence, it follows from the arguments of \eqref{n-n0}  that
$$u(T+t_{0n},0;t_{0n},x_n, u_{0n}(\cdot+x_n)) >\delta_{n},
$$
which is a contradictions. Hence {\bf case 1} does not hold.
}

\medskip

\noindent {\bf Case 2.} $\liminf_{n\to\infty}|D_{0n}|>0$.

In this case, without lost of generality, we might suppose that $\inf_{n\geq 1}|D_{0n}|>0$, and there a suitable $N-$cube, $D\subset\subset D_{3L}$ with  $\inf_{n\geq 1}|D\cap D_{0n}|>0$. Let $\Psi_{n}(x,t)$ denotes the solution of
\begin{align}\label{aux-eq5-7}
\begin{cases}
u_t=\Delta u, \quad x\in D_{3L}\cr
u=0 , \quad \text{on}\, (0,T)\times \partial D_{3L}\cr
u(\cdot,0)=\frac{\delta_0}{2}\chi_{_{D\cap D_{0n}}}.
\end{cases}
\end{align}
Thus, by comparison principle for parabolic equations, we have
$$e^{t\Delta}u_{0n}(x+x_{n})\geq \Psi_{n}(x,t), \quad \forall x\in D_{3L}, 0\leq t\leq T,\,\, n\geq 1.$$
From this, it follows that
\begin{equation}\label{aux-eq5-8}
\|e^{t\Delta}u_{0n}(\cdot+x_{n}) \|^{2}_{C^{\infty}(D_{3L})}\geq \frac{1}{|D_{3L}|}\int_{D_{3L}}\Psi^2_{n}(x,t)dx,\quad \forall 0\leq t\leq T,\,\, n\geq 1.
\end{equation}
Note that for every $n\geq 1$, $\Psi_{n}(x,t)$ can be written as
$$
\Psi_{n}(x,t)=\frac{\delta_0}{2}\sum_{k=1}^{\infty}e^{-t\tilde{\lambda}_{k}}\phi_{k}(x)\left[\int_{D_{3L}}\phi_{k}(y)\chi_{_{D\cap D_{0n}}}(y)dy\right],
$$
where $\{\phi_{k}\}_{k\geq 1}$ denotes the orthonormal basis of $L^{2}(D_{3L})$ consisting of eigenfunctions with corresponding eigenvalues $\{\tilde{\lambda}_{k}\}$ of $-\Delta$ with Dirichlet boundary conditions on $D_{3L}$. Since $\tilde{\lambda}_{1}$ is principal, then we might suppose that $\phi_{1}(x)>0$ for every $x\in D_{3L}$.  Thus
\begin{align}\label{aux-eq5-9}
\|\Psi_{n}(\cdot,t)\|^2_{L^{2}(D_{3L})}&=\sum_{k=1}^{\infty}e^{-2t\tilde{\lambda}_{k}}\left[\frac{\delta_0}{2}\int_{D_{3L}}\phi_{k}(y)\chi_{_{D\cap D_{0n}}}(y)dy\right]^2\cr
&\geq e^{-2t\tilde{\lambda}_1}\left[\int_{D_{3L}}\phi_{1}(y)\chi_{_{D\cap D_{0n}}}(y)dy\right]^2\cr
&\geq e^{-2t\tilde{\lambda_1}}\left[\frac{\delta_0}{2}|D\cap D_{0n}| \min_{y\in D}\phi_{1}(y)\right]^2.
\end{align}
Since $\inf_{n\geq 1}|D_{0n}|>0$ and $\min_{y\in D}\phi_{1}(y)>0$, it follows from \eqref{aux-eq5-8} and \eqref{aux-eq5-9} that
$$
\inf_{0\leq t\leq T,n\geq 1} \|e^{t(\Delta-I)}u_{0n}(\cdot+x_{n})\|_{C(D_{3L})}>0.
$$

Thus there is $0<T_{0}\ll 1$ such that
$$
\inf_{n\geq 1} \|u(\cdot, T_0+t_{0n};t_{0n},x_{n},u_{0n}(\cdot+x_n))\|_{C^{0}(D_{3L})}>0.
$$
Hence, we might suppose that
$u(\cdot, T_0+t_{0n};t_{0n},x_{n},u_{0n}(\cdot+x_n))\to u^{*}_{0}$ locally uniformly and $\|u^{*}_{0}\|_{C(D_{3L})}>0$. Moreover, by Lemma \ref{continuity with respect to open compact topology},  we might assume that
$(u(\cdot, T+t_{0n};t_{0n},x_{n},u_{0n}(\cdot+x_n)),v(\cdot, T+t_{0n};t_{0n},x_{n},u_{0n}(\cdot+x_n)))\to (u^{*}(x,t),v^{*}(x,t))$, $a(x+x_{n},t)\to a^{*}(x,t)$, and  $b(x+x_{n},t)\to b^{*}(x,t)$, where $(u^{*},v^{*})$ satisfies
\begin{equation*}
\begin{cases}
u^{*}_{t}=\Delta u^{*}-\chi\nabla\cdot(u^{*}\nabla v^{*})+(a^{*}-b^{*}u^{*})u^{*}\cr
0=(\Delta-\lambda I)v^{*}+\mu u^{*} \cr
u^*(\cdot,0)=u^{*}_{0}.
\end{cases}
\end{equation*}
Sine $\|u^*_0\|_{\infty}>0$ and $u^*(x,t)\geq 0$, it follows from comparison principle for parabolic equations that $u^{*}(x,t)>0$ for every $x\in R^N$ and $t\in (0, T]$. In particular $u^{*}(0,T)>0$. Note by \eqref{aux-eq5-2} that we must have $u^{*}(0,T)=0$, which is a contradiction. Hence the result holds.
\end{proof}

We now present the proof of Theorem \ref{Main-thm1}.

\begin{proof}[Proof of Theorem \ref{Main-thm1}]
(i) Let $u_0\in C^b_{\rm uinf}(\R^N)$, with $u_{0\inf}>0$ be given. It follows from \eqref{u-upper-bound-eq3} that there is $T_1>0$ such that
\begin{equation*}
u(x,t+t_0;t_0,u_0)\le M^+:={ \frac {a_{\sup}}{b_{\inf}-\chi\mu}+1},\quad \forall \,\, t\ge T_1, \quad, \forall \ t_0\in\R.
\end{equation*}  Note that $T_1$ is independent of $t_0$.
We claim that
\begin{equation}\label{persistence-eq}
m(u_0):=\inf_{t_0\in\mathbb{R},(x,t)\in\R^N\times[0 , \infty)}u(x,t+t_0;t_0,u_0)>0.
\end{equation}
 In fact,  since $u_{0\inf}>0$, by Lemma \ref{main-lem1}, we have that
\begin{equation}\label{persistence-eq0}
\delta_1:=\inf_{t_0\in\mathbb{R},(x,t)\in\R^N\times[t_0, t_0+T_1]}u(x,t+t_0;t_0,u_0)\ge u_{0\inf}e^{-T_1(a_{\inf}+b_{\sup}\|u_0\|_{\infty}e^{T_1a_{\sup}})}>0.
\end{equation}
 Let
 $$\delta_2=\min\{\delta_1,\delta_0(T_1)\},
  $$
  where $\delta_0(T_1)$ is given by Lemma \ref{main-new-lm2}.  Then $\delta_2>0$. By induction, it follows from Lemma \ref{main-new-lm2} that
\begin{equation}\label{persistence-eq1}
\delta_2\leq \inf_{x}u(x,t_0+nT_1;t_0,u_0))\leq M^+ ,  \quad \forall\, t_0\in\mathbb{R}\,\,\, {\rm and}\,\,\, \forall\ n\in\mathbb{N}.
\end{equation}
Lemma \ref{main-lem1} implies that for every $t_0\in\mathbb{R}$,  $x\in\R^N, \ t\in[0,T_1]$ and $n\in\mathbb{N}\cup\{0\}$, we have
\begin{align}\label{persistence-eq2}
u(x,t_0+nT_1+t;t_0,u_0)&=
u(x,t_0+nT_1+t;t_0+nT_1,u(x,t_0+nT_1,t_0,u_0))\cr
&\geq \delta_2 e^{t(a_{\inf} -b_{\sup} M^+e^{T_1a_{\sup}})}\cr
&\geq \delta_2 e^{-T_1(a_{\inf} +b_{\sup} M^+e^{T_1a_{\sup}})}
\end{align}
By \eqref{persistence-eq2}, we obtain that
$$
\inf_{t_0\in\mathbb{R},(x,t)\in\R^N\times[0,\infty)}u(x,t_0+t;t_0,u_0))\geq \delta_2 e^{-T_1(a_{\inf} +b_{\sup} M^+T_1)}
$$
The last inequality  yields that $m(u_0)>0$. Hence \eqref{persistence-eq} holds.

\smallskip

  (ii) Let $(\underline{M}_{n},\overline{M}_{n})_{n\geq 0}$ be the sequence define by \eqref{M-n def}.  Let $u_0\in C^b_{\rm unif}(\R^N)$ with $u_{0\inf}>0$ be fixed.

 We first claim that for every $n\geq 0$, and ${\varepsilon}>0$ there is $T_{\varepsilon}^n(u_0)$ such that
 \begin{equation}\label{p-001}
 \underline{M}_{n}-\varepsilon\leq u(x,t+t_0;t_0,u_0)\leq \overline{M}_{n}+\varepsilon\quad \forall\,x\in\R^N,\, \forall\, t\geq T^{n}_{\varepsilon}(u_0), \, \forall\, t_0\in\R,
 \end{equation}
 which implies that for any ${\varepsilon}>0$ there is $T_{\varepsilon}(u_0)$ such that \eqref{attracting-rect-eq1} holds.

 In fact, for $n=0$, it clear that $\underline{M}_{0}=0\leq u(x,t+t_0;t_0,u_0)$ for every $x\in\R^N, \ t\geq 0$, and $t_{0}\in\R$. It follows \eqref{global-eq02} that there is $T^0_{\varepsilon}(u_0)$ such that
 $$
u(x,t+t_0;t_0,u_0)\leq \overline{M}_{1}+{\varepsilon},\quad \forall\, x\in\R^N,\, t\geq T^0_{{\varepsilon}}(u_0), \,\forall\, t_0\in\R.
 $$
 Hence \eqref{p-001} holds for $n=0$. Suppose that \eqref{p-001} holds for  $n-1$, ($n\geq 1$). We show that \eqref{p-001} also holds for $n$. Indeed, let $\varepsilon>0$. It follows from the induction hypothesis that there is $\tilde{T}^{n-1}_{\varepsilon}(u_0)\gg 1$ that
 \begin{equation}\label{p-002}
\underline{M}_{n-1}-{\frac{\varepsilon}{4}}\leq u(x,t+t_0;t_0,u_0)\leq \overline{M}_{n-1}+{\frac{\varepsilon}{4}} \quad \forall\ x\in\R^N,\,\forall\, t\geq T^{n-1}_{\varepsilon}(u_0), \, \forall\, t_0\in\R.
 \end{equation}
 This implies that
 \begin{equation}\label{eq-xx00}
\begin{cases}
u_t(\cdot,\cdot+t_0;t_0,u_0)\\
\geq  \Delta u(\cdot,\cdot+t_0;t_0,u_0) -\chi(\nabla v\cdot \nabla u)(\cdot,\cdot+t_0;t_0,u_0)\\
\quad+(a_{\inf}-\chi\mu (\overline{M}_{n-1}+\frac{\varepsilon}{4})-(b_{\sup}-\chi\mu)u(\cdot,\cdot+t_0;t_0,u_0))u(\cdot,\cdot+t_0;t_0,u_0),\,t>\tilde T^{n-1}_{\varepsilon}(u_0),\\
u(\cdot,\tilde T^{n-1}_{\varepsilon}(u_0)+t_0;t_0,u_0)\geq m(u_0),
\end{cases}
 \end{equation}
 where $m(u_0):=\inf\{u(x,t+t_0;t_0,u_0)\ |\ x\in\R^N,\,\, t\in[0 ,\infty),\,\, t_0\in\R\}>0$.
 Hence, it follows from comparison principle for parabolic equations that there is ${\tilde T}^{n}_{\varepsilon}(u_0)\ge \tilde T^{n-1}_{\varepsilon}(u_0) $ such that
 \begin{equation}\label{p-003}
 u(x,t+t_0;t_0,u_0)\geq \frac{(a_{\inf}-\chi\mu(\overline{M}_{n-1}+\frac{\varepsilon}{4}))_+}{b_{\sup}-\chi\mu}-\frac{\varepsilon}{4}, \quad \forall\ t\geq \tilde{T}^{n}_{\varepsilon}(u_0), \, x\in\R^N,\, \forall\, t_0\in\R.
 \end{equation}
 Note that, since {\bf (H2)} holds, then it follows from Lemma \ref{Main-lem2'} that
 \begin{equation}\label{p-004}
 \frac{(a_{\inf}-\chi\mu(\overline{M}_{n-1}+\frac{\varepsilon}{4}))_+}{b_{\sup}-\chi\mu}\geq \underline{M}_{n} -\frac{\chi\mu\varepsilon}{4(b_{\sup}-\chi\mu)}\geq \underline{M}_{n}-\frac{\varepsilon}{4}.
 \end{equation}
 It follows from \eqref{p-003} and \eqref{p-004} that
 \begin{equation}\label{eq-xx01}
\begin{cases}
u_t(\cdot,\cdot+t_0;t_0,u_0)\\
\leq  \Delta u(\cdot,\cdot+t_0;t_0,u_0) -\chi(\nabla v\cdot \nabla u)(\cdot,\cdot+t_0;t_0,u_0)\\
\quad+(a_{\sup}-\chi\mu (\underline{M}_{n}-\frac{\varepsilon}{2})-(b_{\inf}-\chi\mu)u(\cdot,\cdot+t_0;t_0,u_0))u(\cdot,\cdot+t_0;t_0,u_0),\,t>\tilde{T}^{n}_{\varepsilon}(u_0),\\
u(\cdot,\tilde{T}^{n}_{\varepsilon}(u_0)+t_0;t_0,u_0)\leq \overline{M}_{n-1}+\frac{\varepsilon}{4}.
\end{cases}
 \end{equation}
  Hence, it follows from comparison principle for parabolic equations that there is $T^{n}_{\varepsilon}(u_0)\ge \tilde{T}^{n}_{\varepsilon}(u_0) $ such that
 \begin{equation}\label{p-005}
 u(x,t+t_0;t_0,u_0)\leq \frac{(a_{\inf}-\chi\mu(\underline{M}_{n}-\frac{\varepsilon}{2}))_+}{b_{\sup}-\chi\mu}+\frac{\varepsilon}{2}, \quad \forall\ t\geq T^{n}_{\varepsilon}(u_0), \, x\in\R^N,\, \forall\, t_0\in\R.
 \end{equation}
 Observe that, sine {\bf (H2)} holds, Lemma \ref{Main-lem2'} implies that
 \begin{equation}\label{p-006}
 \frac{(a_{\inf}-\chi\mu(\underline{M}_{n}-\frac{\varepsilon}{2}))_+}{b_{\inf}-\chi\mu}\leq \overline{M}_{n}+\frac{\varepsilon}{2}.
 \end{equation}
 Hence, it follows from \eqref{p-003}-\eqref{p-006} that \eqref{p-001} also holds for $n$. Thus we conclude that \eqref{p-001} holds for every $n\geq 0$.

  Next, we show that the set $\mathbb{I}_{inv}$ given by \eqref{Invariant set} is an invariant set for positive solutions of \eqref{P}.

  By Lemma \ref{Main-lem2'} we have that $\underline{M}_{n}\nearrow \underline{M}$ and $\overline{M}_{n}\searrow\overline{M}$. It suffices to show that the set $\mathbb{I}^n_{inv}:=\{ u_0\in C^b_{\rm unif}(\R^N)\ |\ \underline{M}_{n}\leq u_0(x)\leq\overline{M}_{n} \}$, $n\ge 0$, is positively invariant for \eqref{P}. This is also done by induction on $n\ge 0$. The case $n=0$ is guaranteed by Theorem \ref{global-existence-tm} (i). Suppose that $\mathbb{I}^{n}_{inv}$ is a positive invariant set for \eqref{P}. Let $u_0\in\mathbb{I}^{n+1}_{inv}$. Since, by Lemma \ref{Main-lem2'}, $\overline{M}_n>\overline{M}_{n+1}$, it follows from \eqref{eq-xx00} and comparison principle for parabolic equations that
  $$
u(x,t+t_0;t_0,u_0)\geq \min\Big{\{}\underline{M}_{n+1}\ ,\ \underbrace{\frac{a_{\inf}-\chi\mu\overline{M}_{n}}{b_{\sup}-\chi\mu}}_{=\underline{M}_{n+1}}\Big{\}}=\underline{M}_{n+1}, \,\,\forall\, x\in\R^N,\,\,\forall\, t\geq0,\forall\,t_0\in\R.
  $$
  Using this last inequality, by \eqref{eq-xx01}, it follows from comparison principle for parabolic equations that $$ u(x,t+t_0;t_0,u_0)\leq \max\Big{\{}\overline{M}_{n+1}\ ,\ \underbrace{\frac{a_{\sup}-\chi\mu\underline{M}_{n+1}}{b_{\inf}-\chi\mu}}_{=\overline{M}_{n+1}}\Big{\}}=\overline{M}_{n+1}, \,\,\forall\, x\in\R^N,\,\,\forall\, t\geq0,\forall\,t_0\in\R. $$
  Thus, $\mathbb{I}^{n+1}_{inv}$ is also  a positive invariant set for \eqref{P}. The result thus follows.
\end{proof}

\section{Asymptotic spreading}

In this section, we  study the spreading properties of positive solutions and prove Theorems \ref{spreading-properties} and  \ref{spreading-properties-1}.
We first present two lemmas.

\begin{lem}
\label{spreading-lm1}
Consider
\begin{equation}
\label{spreading-eq1}
u_t=u_{xx}+{ q_0}u_x+u(a_0-b_0u),\quad x\in\R^1,
\end{equation}
where ${ q_0}\in\R^1$ is a constant  and $a_0,b_0$ are two positive constants.
Let $u(x,t;u_0)$ be the solution of \eqref{spreading-eq1} with $u(\cdot,0;u_0)=u_0(\cdot)\in C_{\rm unif}^b(\R^1)$
($u_0(x)\ge 0$). For every nonnegative front-like initial function $u_0\in C_{\rm unif}^b(\R^1)$
(i.e. $\liminf_{x\to -\infty}u_0(x)>0$ and $u_0(x)=0$ for $x\gg 1$), there hold
$$
\limsup_{t\to\infty, x\ge ct} u(x,t;u_0)=0\quad \forall\,\, c>c_0^*
$$
and
$$
\liminf_{t\to\infty,x\le ct}u(x,t;u_0)>0\quad \forall\, \, c<c_0^*,
$$
where $c_0^*=2\sqrt{a_0}-{ q_0}$.
\end{lem}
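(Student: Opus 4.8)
The plan is to remove the convection term by a Galilean change of variables and then reduce to the classical Fisher--KPP spreading dichotomy. Note first that the solution is global and bounded, $0\le u\le\max\{\|u_0\|_\infty,a_0/b_0\}$, by comparison with the ODE $\dot m=m(a_0-b_0m)$. Now set $w(y,t):=u(y-q_0t,t)$; a direct computation gives
\[
w_t=w_{yy}+w(a_0-b_0w),\qquad y\in\R^1,\quad w(\cdot,0)=u_0,
\]
and $w_0=u_0$ is still front-like. Since $\{x\ge ct\}=\{y\ge(c+q_0)t\}$, $\{x\le ct\}=\{y\le(c+q_0)t\}$, and $c\gtrless c_0^*=2\sqrt{a_0}-q_0\iff c+q_0\gtrless2\sqrt{a_0}$, the two assertions of the lemma for $u$ are exactly the two assertions for $w$ with critical speed $2\sqrt{a_0}$. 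So it suffices to treat the homogeneous Fisher equation, $q_0=0$.

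For the upper bound I would use an exponential supersolution of the linearization. Given $c>2\sqrt{a_0}$, pick $\tilde c\in(2\sqrt{a_0},c)$ and $\alpha>0$ with $\tilde c=\alpha+a_0/\alpha$ (possible since $\min_{\alpha>0}(\alpha+a_0/\alpha)=2\sqrt{a_0}$ by AM--GM). Since $u_0$ is bounded and vanishes on $[R,\infty)$ for some $R$, the function $\bar w(y,t):=\|u_0\|_\infty e^{\alpha R}e^{-\alpha(y-\tilde c t)}$ satisfies $\bar w(\cdot,0)\ge u_0$ and $\bar w_t=\bar w_{yy}+a_0\bar w\ge\bar w_{yy}+\bar w(a_0-b_0\bar w)$, so the comparison principle gives $0\le w(y,t)\le\|u_0\|_\infty e^{\alpha R}e^{-\alpha(y-\tilde c t)}$, which tends to $0$ uniformly on $\{y\ge ct\}$ because $\tilde c<c$.

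The lower bound is the substantive part and I would carry it out in two stages. \emph{Stage 1 (a half-line lower bound for all time).} After shrinking $\delta_0:=\tfrac14\liminf_{y\to-\infty}u_0(y)$ if necessary so that $2\delta_0\le a_0/b_0$, pick $R_0$ with $u_0\ge2\delta_0$ on $(-\infty,-R_0]$. The solution $v$ of the \emph{heat} equation $v_t=v_{yy}$ starting from the datum equal to $2\delta_0$ on $(-\infty,-R_0]$ and $0$ elsewhere obeys $0\le v\le2\delta_0\le a_0/b_0$, hence is a subsolution of the Fisher equation, and an elementary Gaussian estimate gives $v(y,t)\ge\delta_0$ for all $y\le-R_0$, $t\ge0$; by comparison $w\ge\delta_0$ on $(-\infty,-R_0]\times[0,\infty)$. \emph{Stage 2 (transporting it rightward with a moving bump).} Fix $c<2\sqrt{a_0}$, choose $\sigma\in(c,2\sqrt{a_0})$ and $\gamma\in(0,\delta_0]$ small enough that $\tilde a_0:=a_0-b_0\gamma>\sigma^2/4$, and set $\omega:=\sqrt{\tilde a_0-\sigma^2/4}$, $L:=\pi/\omega$, $\phi(\xi):=e^{-\sigma\xi/2}\sin(\omega\xi)$ on $[0,L]$ and $\phi\equiv0$ off $[0,L]$. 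For any $y_1$ the translating bump $\underline w(y,t):=\gamma\,\phi(y-\sigma t-y_1)$ is a distributional subsolution of the Fisher equation: inside its support $\underline w_t-\underline w_{yy}-\underline w(a_0-b_0\underline w)=-b_0\gamma^2\phi(1-\phi)\le0$ (using $\phi''+\sigma\phi'+\tilde a_0\phi=0$ and $0\le\phi\le1$), and the two corners at $\xi=0,L$ are convex, contributing a nonnegative Dirac mass to $\underline w_{yy}$. Whenever $y_1+L\le-R_0$ we have $\underline w(\cdot,0)\le\gamma\le\delta_0\le w(\cdot,0)$ on the support and $\underline w(\cdot,0)=0\le w(\cdot,0)$ elsewhere, so $w(y,t)\ge\gamma\phi(y-\sigma t-y_1)$ for all $y,t$. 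Given $y\le ct$, taking $y_1:=y-\sigma t-L/2$ — which is admissible once $t\ge(R_0+L/2)/(\sigma-c)$ since $\sigma>c$ — yields $w(y,t)\ge\gamma\phi(L/2)=\gamma e^{-\sigma\pi/(4\omega)}>0$. Hence $\liminf_{t\to\infty}\inf_{y\le ct}w(y,t)>0$, and since $c<2\sqrt{a_0}$ was arbitrary this is the claim; translating back to $u$ completes the proof.

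I expect Stage 2 to be the main obstacle: one must produce a moving subsolution whose speed can be pushed arbitrarily close to the sharp value $2\sqrt{a_0}$, and then verify the bookkeeping that translates of it fill the whole cone $\{y\le ct\}$ for every $c<2\sqrt{a_0}$. Stage 1 — turning a merely front-like datum into a genuine uniform lower bound on a left half-line — is the other point needing a little care; alternatively, once $q_0=0$ the lower bound is the classical Aronson--Weinberger/KPP spreading theorem and could simply be cited. The upper bound, by contrast, is a one-line comparison argument.
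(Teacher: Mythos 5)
Your proof is correct, and the key step — removing the drift by the Galilean change of variables $w(y,t)=u(y-q_0t,t)$ and matching up the cones $\{x\gtrless ct\}\leftrightarrow\{y\gtrless(c+q_0)t\}$ — is exactly what the paper does. The only difference is that once $q_0=0$ the paper simply invokes \cite[Theorems 16 and 17]{KPP} together with the comparison principle, whereas you go on to reprove the Fisher--KPP spreading dichotomy from scratch: the exponential supersolution for the upper bound and the moving truncated-sine bump (Aronson--Weinberger style) for the lower bound. Both of your stages check out (the identity $\phi''+\sigma\phi'+\tilde a_0\phi=0$, the sign of the corner Dirac masses, and the admissibility condition $t\ge(R_0+L/2)/(\sigma-c)$ are all right), so your version is a valid self-contained alternative to the paper's citation, at the cost of being considerably longer; as you note yourself, once the reduction is made one could just cite the classical result.
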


\begin{proof}
Let $v(x,t;u_0)=u(x-{q_0}t,t;u_0)$. Then $v(x,t)$ satisfies
\begin{equation}
\label{spreading-eq2}
v_t=v_{xx}+v(a_0-b_0v),\quad x\in\R^1.
\end{equation}
By  \cite[Theorems 16 and 17]{KPP} and comparison principle for parabolic equations, for  every nonnegative front-like initial function
$u_0\in C_{\rm unif}^b(\R^1)$,
$$
\limsup_{t\to\infty,x\ge ct}v(x,t;u_0)=0\quad \forall\,\, c>2\sqrt{a_0}
$$
and
$$
\liminf_{t\to\infty,x\le ct} v(x,t;u_0)>0\quad \forall\, \, c<2\sqrt {a_0}.
$$
Note that $u(x,t;u_0)=v(x+q_0t,t;u_0)$.
The lemma thus follows.
\end{proof}

\begin{lem}
\label{spreading-lm2}
Consider
\begin{equation}
\label{spreading-eq3}
u_t=\Delta u+q_0(x,t)\cdot \nabla u+u(a_0-b_0 u),\quad x\in\R^N,
\end{equation}
where $q_0\in\R^N$ is a continuous vector function and $a_0,b_0$ are positive constants.
Let $u(x,t;u_0)$ be the solution of \eqref{spreading-eq3} with $u(\cdot,0;u_0)=u_0(\cdot)\in C_{\rm unif}^b(\R^N)$
($u_0(x)\ge 0$). If
\begin{equation}
\label{spreading-eq4}
\liminf_{|x|\to\infty}\inf_{t\ge 0}(4a_0-|q_0(x,t)|^2)>0,
\end{equation}
then for any nonnegative initial function $u_0\in C_{\rm unif}^b(\R^N)$ with nonempty support,
$$
\liminf_{t\to\infty,|x|\le ct}u(x,t;u_0)>0\quad \forall\,\, 0<c<c_0^*,
$$
where $c_0^*=\liminf_{|x|\to\infty}\inf_{t\ge 0} (2\sqrt{a_0}-|q_0(x,t)|)$.
\end{lem}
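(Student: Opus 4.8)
The plan is to deduce the statement from the one–dimensional Lemma~\ref{spreading-lm1} by a \emph{radial} comparison argument; the point is that the condition at infinity forces $\tfrac{N-1}{|x|}$ together with $\tfrac{q_0\cdot x}{|x|}$ to stay below a constant strictly less than $2\sqrt{a_0}$, so a radially transplanted $1$D front is a subsolution in the far field.

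Fix $0<c<c_0^*$ and choose $\varepsilon>0$ with $c+2\varepsilon<c_0^*$. By definition of $c_0^*$ there is $R_1\geq (N-1)/\varepsilon$ with $|q_0(x,t)|\leq 2\sqrt{a_0}-c-2\varepsilon$ for all $|x|\geq R_1$, $t\geq 0$; then, writing $Q_1:=2\sqrt{a_0}-c-\varepsilon$, one has $\tfrac{N-1}{|x|}+\tfrac{q_0(x,t)\cdot x}{|x|}\leq Q_1$ on $\{|x|\geq R_1\}$ for every $t$, while the $1$D equation $w_\tau=w_{ss}+Q_1w_s+w(a_0-b_0w)$ has spreading speed $2\sqrt{a_0}-Q_1=c+\varepsilon>c$. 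I would first establish a \emph{local lower bound}: there exist $R_2>R_1+1$, $\delta_\sharp\in(0,a_0/b_0)$, $T_0>0$ with $u(x,t;u_0)\geq\delta_\sharp$ for $|x|\leq R_2$, $t\geq T_0$. Since $u_0\not\equiv0$, the strong maximum principle gives $u(\cdot,1;u_0)\geq\eta_0>0$ on $\overline{B(0,R_2+1)}$; comparing $u$ from below with the solution of the Dirichlet problem for \eqref{spreading-eq3} on $B(0,R_2+1)$ with datum $\eta_0$, and using that for $R_2$ large the principal eigenvalue of $\Delta+q_0(\cdot,t)\cdot\nabla+a_0$ on $B(0,R_2+1)$ is bounded below by a positive constant uniformly in $t$ (a standard fact for bounded drift; cf.\ Steps~1--3 of the proof of Lemma~\ref{main-new-lm2}), this Dirichlet solution stays above a time–independent subsolution $\kappa(t)\underline\psi(x)$ with $\kappa(t)\to\kappa_\infty>0$, which yields $\delta_\sharp$.

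The core step is a radial subsolution built from a \emph{capped} $1$D solution. Let $\eta\in C^\infty(\R)$ be nonincreasing, $\equiv1$ on $(-\infty,0]$ and $\equiv0$ on $[1,\infty)$; let $\underline u(s,\tau)$ solve $\underline u_\tau=\underline u_{ss}+Q_1\underline u_s+\underline u(a_0-b_0\underline u)$ with $\underline u(\cdot,0)=\eta$; and set $\tilde u:=\min\{\underline u,\delta_\sharp\}$. Then $\tilde u$ is a generalized subsolution of this $1$D equation, $0\leq\tilde u\leq\delta_\sharp$, $\partial_s\tilde u\leq0$ (monotonicity in $s$ is preserved by the flow), and by Lemma~\ref{spreading-lm1}, after shrinking $\delta_\sharp$ if necessary, there is $\tau_0$ with $\tilde u(s,\tau)=\delta_\sharp$ whenever $s\leq(c+\tfrac{\varepsilon}{2})\tau$ and $\tau\geq\tau_0$. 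Put $\underline W(x,t):=\tilde u(|x|-R_1,\,t-T_0)$ on $\{|x|\geq R_1,\ t\geq T_0\}$. Since $\partial_s\tilde u\leq0$ and $\tfrac{N-1}{|x|}+\tfrac{q_0\cdot x}{|x|}-Q_1\leq0$ there, a one–line computation from the $1$D subsolution inequality shows $\underline W$ is a subsolution of \eqref{spreading-eq3} on $\{|x|>R_1\}\times(T_0,\infty)$; on the parabolic boundary, $\underline W(\cdot,T_0)$ is supported in $\{R_1\leq|x|\leq R_1+1\}$ with values $\leq\delta_\sharp\leq u(\cdot,T_0)$ by the local lower bound (as $R_1+1<R_2$), and on $\{|x|=R_1\}$ one has $\underline W\leq\delta_\sharp\leq u$. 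As $u$ and $\underline W$ are bounded, the comparison principle gives $u\geq\underline W$ on $\{|x|\geq R_1\}\times[T_0,\infty)$.

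Combining $u(x,t;u_0)\geq\underline W(x,t)=\delta_\sharp$ for $R_1\leq|x|\leq R_1+(c+\tfrac{\varepsilon}{2})(t-T_0)$, $t\geq T_0+\tau_0$, with $u(x,t;u_0)\geq\delta_\sharp$ for $|x|\leq R_1$, $t\geq T_0$, one gets $u(x,t;u_0)\geq\delta_\sharp$ on $\{|x|\leq R_1+(c+\tfrac{\varepsilon}{2})(t-T_0)\}$; this region contains $\{|x|\leq ct\}$ for all large $t$, so $\liminf_{t\to\infty}\inf_{|x|\leq ct}u(x,t;u_0)\geq\delta_\sharp>0$, and since $0<c<c_0^*$ was arbitrary the lemma follows. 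I expect the main difficulties to be the uniform-in-$t$ positivity of the principal eigenvalue on a large ball used in the local lower bound (which also tacitly requires $q_0$ to be bounded, as it is in all intended applications of this lemma), and the coupling of the far-field subsolution with the near-field lower bound — for which capping $\tilde u$ at $\delta_\sharp$, the only level available on $\{|x|=R_1\}$, is exactly the mechanism that makes the comparison close.
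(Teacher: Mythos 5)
The paper's own ``proof'' of this lemma is a one-line citation to Theorem~1.5 of Berestycki--Hamel--Nadin~\cite{Berestycki1}, so your self-contained argument via a radial $1$D comparison is a genuinely different route, and its overall architecture (a local lower bound near the origin, then a radially transplanted KPP subsolution pushed outward through the annulus) is the right one. The radial identity $\Delta W+q_0\cdot\nabla W = w_{ss}+(\tfrac{N-1}{|x|}+\tfrac{q_0\cdot x}{|x|})w_s$ applied to a monotone $w$, together with $\tfrac{N-1}{|x|}+\tfrac{q_0\cdot x}{|x|}\le Q_1$ on $|x|\ge R_1$, is exactly how one passes from $1$D to $N$D.

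However, the central technical step -- \emph{``set $\tilde u:=\min\{\underline u,\delta_\sharp\}$. Then $\tilde u$ is a generalized subsolution''} -- is false. The minimum of two subsolutions is in general \emph{not} a subsolution. Concretely, the interface $\Gamma=\{\underline u=\delta_\sharp\}$ is a concave kink: to its left (small $s$) one has $\tilde u\equiv\delta_\sharp$ with $\tilde u_s=0$, to its right $\tilde u=\underline u$ with $\underline u_s<0$, so $\tilde u_s$ jumps down across $\Gamma$ and $\tilde u_{ss}$ acquires a \emph{negative} Dirac mass on $\Gamma$. In the subsolution inequality $\tilde u_t-\Delta\tilde u-\cdots\le 0$ this singular part enters with the wrong sign ($-\Delta\tilde u$ then has a \emph{positive} delta), and it cannot be absorbed by the bounded reaction term $\delta_\sharp(a_0-b_0\delta_\sharp)$. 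Equivalently, in the viscosity formulation one may touch $\tilde u$ from above at the kink with test functions whose second derivative tends to $-\infty$, violating the subsolution inequality. A bare-bones model is the pure heat equation: $\min\{x,1\}$ is the min of two exact solutions, yet the heat flow of $\min\{x,1\}$ immediately dips strictly below $\min\{x,1\}$ near $x=1$, so comparison with the capped function fails. The cap is precisely what you rely on to close the comparison on $\{|x|=R_1\}$, so this gap is not cosmetic.

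The repair is cheap, and keeps the rest of your proof intact: instead of capping, lower the $1$D carrying capacity. Let $b_1:=a_0/\delta_\sharp>b_0$ and take $\underline u$ to solve
$$
\underline u_\tau=\underline u_{ss}+Q_1\underline u_s+\underline u\,(a_0-b_1\underline u),\qquad \underline u(\cdot,0)=\delta_\sharp\,\eta,
$$
whose carrying capacity is exactly $\delta_\sharp$, so $0\le\underline u\le\delta_\sharp$ for all $\tau\ge0$ and $\underline u_s\le0$ by the sliding argument, with no capping needed. Since $b_1>b_0$, one has $\underline u(a_0-b_1\underline u)\le\underline u(a_0-b_0\underline u)$ for $\underline u\ge0$, so the radial transplant $\underline W(x,t)=\underline u(|x|-R_1,t-T_0)$ is still a classical subsolution of \eqref{spreading-eq3} on $\{|x|>R_1\}\times(T_0,\infty)$ wherever $\tfrac{N-1}{|x|}+\tfrac{q_0\cdot x}{|x|}\le Q_1$, and now $\underline W\le\delta_\sharp\le u$ on the entire parabolic boundary $\{|x|=R_1\}\cup\{t=T_0,\,|x|\ge R_1\}$, so comparison applies cleanly. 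The KPP spreading speed $2\sqrt{a_0}-Q_1=c+\varepsilon$ is unchanged (it depends on $a_0$ and $Q_1$ only, not on $b_1$), and Lemma~\ref{spreading-lm1} gives $\underline u(s,\tau)\to\delta_\sharp$ uniformly on $\{s\le c'\tau\}$ for every $c'<c+\varepsilon$, which is all you need. A secondary point: your local lower bound tacitly requires $q_0$ to be globally bounded (with $\sup|q_0|<2\sqrt{a_0}$ in order that the ball eigenvalue be positive), whereas the lemma's hypothesis only controls $q_0$ at infinity; you flag this yourself and it holds in all uses in this paper, but note that the cited result of \cite{Berestycki1} handles the more general situation.
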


\begin{proof}
It follows from Theorem 1.5  in  \cite{Berestycki1}.
\end{proof}

Next, we prove Theorem  \ref{spreading-properties}.

\begin{proof}[Proof of Theorem \ref{spreading-properties}]
(1) Let $t_{0}\in\R$ and $u_0\in C^b_{\rm unif}(\R^N)$ with $u_0\ge 0$ such that there is $R\gg 1$ with $u_0(x)=0$ for all $\|x\|\geq R.$  By  \eqref{u-upper-bound-eq3},  for every $\varepsilon>0$ there is $T_{\varepsilon}>0$ such that
\begin{equation}\label{bb-eq01}
\|u(\cdot,t+t_0;t_0,u_0)\|\leq \frac{a_{\sup}}{b_{\inf}-\chi\mu}+\varepsilon,\quad \forall t\geq T_{\varepsilon}.
\end{equation}
By \eqref{v-bound-eq2} and  \eqref{bb-eq01}, we have that
\begin{equation}\label{bb-eq02}
\|\nabla v(\cdot,t+t_0;t_0,u_0)\|_{\infty}\leq \frac{\mu\sqrt{N}}{2\sqrt{\lambda}}\big(\frac{a_{\sup}}{b_{\inf}-\chi\mu}+\varepsilon\big),\quad \forall t\geq T_{\varepsilon}.
\end{equation}
Choose $C>0$ such that
$$
u_0(x)\leq Ce^{-\sqrt{a_{\sup}}|x|}, \quad \forall\ x\in\R^N,
$$
and let $$
K_{\varepsilon}:=\sup_{0\leq t\leq T_{\varepsilon}}\|\nabla v(\cdot,t+t_0;t_0,u_0)\|_{\infty}.
$$
Let $\xi\in \mathbb{S}^{N-1}$ be given and consider
$$
\overline{U}(x,t;\xi):=Ce^{-\sqrt{a_{\sup}}(x\cdot\xi-(2\sqrt{a_{\sup}}+\chi K_{\varepsilon})t)}.
$$
Recall from inequality \eqref{global-eq01} that $
u_{t}(\cdot,\cdot+t_0;t_0,u_0)\leq \mathcal{L}_{0}u(\cdot,\cdot+t_0;t_0,u_0),
$ where
$$\mathcal{L}_{s}(w):= \Delta w-\chi\nabla v(\cdot,\cdot+s+t_0;t_0,u_0)\cdot\nabla w +(a_{\sup}-(b_{\inf}-\chi\mu)w)w, \forall \ w\in C^{2,1}(\R^N\times(0,\infty)), \forall\ s\geq 0.$$
We have that
\begin{align*}
\overline{U}_t-\mathcal{L}_{0}\overline{U}&=\left((2a_{\sup}+\chi\sqrt{a_{\sup}}K_{\varepsilon})-a_{\sup}+\chi\sqrt{a_{\sup}}\xi\cdot\nabla v(\cdot,\cdot+t_0;t_0,u_0)-(a_{\sup}-(b_{\inf}-\chi\mu)\overline{U})\right)\overline{U}\nonumber\\
&=\left(\chi\sqrt{a_{\sup}}(K_{\varepsilon}-\xi\cdot\nabla v(\cdot,\cdot+t_0;t_0,u_0)) +(b_{\inf}-\chi\mu)\overline{U}\right)\overline{U}\nonumber\\
&\geq \left(\chi\sqrt{a_{\sup}}(K_{\varepsilon}-\|\xi\cdot\nabla v(\cdot,\cdot+t_0;t_0,u_0)\|_{\infty}) +(b_{\inf}-\chi\mu)\overline{U}\right)\overline{U}\nonumber\\
&\geq 0.
\end{align*}
Since $u_0(x)\leq \overline{U}(x,0;\xi)$, then it follows from comparison principle for parabolic equations that
\begin{equation}\label{bb-eq03}
u(x,t+t_0;t_0,u_0)\leq \overline{U}(x,t;\xi),\quad \forall \ x\in\R^N, \ \forall \ t\in[0 , T_{\varepsilon}
], \ \forall\ \xi\in \mathbb{S}^{N-1}.
\end{equation}
Next, let
$$ L_{\varepsilon}:=\frac{\mu\sqrt{N}}{2\sqrt{\lambda}}\big(\frac{a_{\sup}}{b_{\infty}-\chi\mu}+\varepsilon\big)$$
and
$$
\overline{W}(x,t;\xi):=e^{-\sqrt{a_{\sup}}(\xi\cdot x-(2\sqrt{a_{\sup}}+\chi L_{\varepsilon})t)}\overline{U}(0,T_{\varepsilon};\xi), \quad \forall t\geq 0, \ \forall\ x\in\R^N, \ \forall\ \xi\in\mathbb{S}^{N-1}.
$$
Similarly, using inequality \eqref{bb-eq02}, we have that
\begin{align*}
\overline{W}_t-\mathcal{L}_{T_{\varepsilon}}\overline{W}&=\left(\chi\sqrt{a_{\sup}}(L_{\varepsilon}-\xi\cdot\nabla v(\cdot,\cdot+T_{\varepsilon}+t_0;t_0,u_0) +(b_{\inf}-\chi\mu)\overline{W}\right)\overline{W}\nonumber\\
&\geq \left(\chi\sqrt{a_{\sup}}(L_{\varepsilon}-\|\xi\cdot\nabla v(\cdot,\cdot+T_{\varepsilon}+t_0;t_0,u_0)\|_{\infty}) +(b_{\inf}-\chi\mu)\overline{W}\right)\overline{W}\nonumber\\
&\geq 0.
\end{align*}
But by \eqref{bb-eq03}, we have that $\overline{W}(x,0;\xi)= \overline{U}(x,T_{\varepsilon};\xi)\geq u(\cdot, T_{\varepsilon}+t_0;t_0,u_0)$. Hence by comparison principle for parabolic equations we obtain that
\begin{equation}\label{bb-eq04}
u(x,t+t_0;t_0,u_0)\leq \overline{W}(x,t;\xi),\quad \forall \ x\in\R^N, \ \forall \ t\geq T_{\varepsilon}
, \ \forall\ \xi\in \mathbb{S}^{N-1}.
\end{equation}
Observe that
$$
\lim_{\varepsilon\to 0^+}(2\sqrt{a_{\sup}}+\chi L_{\varepsilon})=2\sqrt{a_{\sup}}+ \frac{\chi\mu\sqrt{N}a_{\sup}}{2(b_{\inf}-\chi\mu)\sqrt{\lambda}}=c^{*}_{+}(a,b,\chi,\lambda,\mu).
$$
Thus, it follows from \eqref{bb-eq04} and the  definition of $\overline{W}$ that
$$
\lim_{t\to\infty}\sup_{|x|\geq ct}u(x,t+t_0;t_0,u_0)=0,
$$
whenever $c>c^*_+(a,b,\chi,\lambda,\mu)$. This complete the proof of (1).

\medskip

(2) We first claim that
\begin{equation}\label{cc-eq00}
 4(a_{\inf}-\frac{\chi\mu a_{\sup}}{b_{\inf}-\chi\mu} )-\frac{N\mu^2\chi^{2} a^2_{\sup}}{4\lambda(b_{\inf}-\chi\mu)^{2}}> 0.
\end{equation}
Indeed, let $\tilde \mu=\frac{\chi\mu a_{\sup}}{b_{\inf}-\chi\mu}$. \eqref{cc-eq00} is equivalent to
$4(a_{\inf}-\tilde \mu)-\frac{N}{4\lambda} \tilde \mu^2>0.$
This implies that
$$
0<\tilde \mu=\frac{\chi\mu a_{\sup}}{b_{\inf}-\chi\mu}<\frac{2 a_{\inf}}{1+\sqrt{1+\frac{Na_{\inf}}{4\lambda}}}
$$
and then
$$
\frac{b_{\inf}-\chi\mu}{\chi\mu}>\frac{\Big(1+\sqrt{1+\frac{Na_{\inf}}{4\lambda}}\Big)a_{\sup}}{2a_{\inf}}.
$$ This proves the claim.

Next, by \eqref{u-upper-bound-eq3}, \eqref{v-bound-eq1}, and  \eqref{v-bound-eq2}, for every $\varepsilon>0$, we can choose $T_{\varepsilon}$ with $T_\varepsilon\to \infty$ as $\varepsilon\to 0$ such that
\begin{equation}
\label{aux-eq01}
\|v(\cdot,t+t_0;t_0,u_0)\|_{\infty}<\frac{\mu a_{\sup}}{\lambda(b_{\inf}-\chi\mu)}+\varepsilon \,\, \text{and}\,\, \|\nabla v(\cdot,t+t_0;t_0,u_0)\|_{\infty}<\frac{\mu\sqrt{N}}{2\sqrt{\lambda}}\Big(\frac{a_{\sup}}{b_{\inf}-\chi\mu}+\varepsilon \Big)
\end{equation}
for all $t\ge T_{\varepsilon}$.

Note that for every $\varepsilon>0$ and $t\geq T_{\varepsilon}+t_0$, we have
\begin{align}\label{cc-eq02}
u_{t}(\cdot,\cdot;t_0,u_0)&\geq \Delta u(\cdot,\cdot;t_0,u_0) -\chi\nabla v(\cdot,\cdot;t_0,u_0)\cdot\nabla u(\cdot,\cdot;t_0,u_0)\nonumber\\
&+(a_{\inf}-\frac{\chi\mu a_{\sup}}{b_{\inf}-\chi\mu}-\chi\mu\varepsilon-(b_{\sup}-\chi\mu)u(\cdot,\cdot;t_0,u_0))u(\cdot,\cdot;t_0,u_0).
\end{align}
For every $\varepsilon>0$,  let $U(\cdot,\cdot;\varepsilon)$ denotes the solution of the initial value problem
\begin{equation}\label{cc-eq03}
\begin{cases}
U_{t}(\cdot,\cdot;\varepsilon)=\mathcal{A}_{\varepsilon}(U)(\cdot,\cdot;\varepsilon), \ t>0, \ x\in\R^N\cr
U(\cdot,0;\varepsilon)=u(\cdot,T_{\varepsilon}+t_0;t_0),
\end{cases}
\end{equation}
where
$$
\mathcal{A}_{\varepsilon}(U)(\cdot,\cdot;\varepsilon)=\Delta U(\cdot,\cdot;\varepsilon)+q(\cdot,\cdot;\varepsilon)\cdot\nabla U(\cdot,\cdot;\varepsilon)+ U(\cdot,\cdot;\varepsilon)F_{\varepsilon}(U(\cdot,\cdot;\varepsilon)),
$$
$$
F_{\varepsilon}(s)=a_{\inf}-\frac{\chi\mu a_{\sup}}{b_{\inf}-\chi\mu}-\chi\mu\varepsilon -(b_{\sup}-\chi\mu)s, \quad \forall s\in \R.
$$
and
$$
q(x,t;\varepsilon)=\begin{cases}
-\chi \nabla v(\cdot,t+T_{\varepsilon}+t_0;t_0,u_0), \  t\geq 0\\
-\chi \nabla v(\cdot,T_{\varepsilon}+t_0;t_0,u_0),\ t<0.
\end{cases}
$$
Hence, by comparison principle for parabolic equations, it follows from \eqref{cc-eq02} and \eqref{cc-eq03} that
\begin{equation}\label{cc-eq04}
u(x,t+T_{\varepsilon}+t_0;u_0)\geq U(x,t;\varepsilon), \quad \varepsilon>0, \ t\geq0, \ x\in\R^N.
\end{equation}
Observe that for $0<\varepsilon\ll 1$, since {\bf (H3)} holds,  it follows form \eqref{cc-eq00} and  \eqref{aux-eq01} that
\begin{equation}
\lim_{R\to\infty}\inf_{t\geq 0,|x|\geq R}(4F_{\varepsilon}(0)-\|q(x,t;\varepsilon)\|^2)\geq 4F_{\varepsilon}(0)-\chi^2\frac{\chi^2\mu^2 N}{4\lambda}\Big(\frac{a_{\sup}}{b_{\inf}-\chi\mu}+\varepsilon \Big)^2>0.
\end{equation}

By Lemma \ref{spreading-lm2}, it holds that
\begin{equation}\label{cc-eq05}
\liminf_{t\rightarrow\infty}\inf_{|x|\leq ct}U(x,t;\varepsilon)>0
\end{equation}
for every $0<\varepsilon\ll 1$ and  $0\leq c < c^{\ast}_{\varepsilon}$ where
\[
c^{\ast}_{\varepsilon}:=\liminf_{|x|\rightarrow\infty}\inf_{t\geq T_{\varepsilon}}\left(2\sqrt{a_{\inf}-\frac{\chi\mu a_{\sup}}{b_{\inf}-\chi\mu}-\chi\mu\varepsilon}-\chi\|\nabla v(x,t+t_0;t_0,u_0)\|\right).
\]
Combining inequalities \eqref{cc-eq04} and \eqref{cc-eq05}, we obtain that
\begin{equation}\label{cc-eq06}
\liminf_{t\rightarrow\infty}\inf_{|x|\leq ct}u(x,t+T_{\varepsilon}+t_0;t_0,u_0)>0\quad \forall\,\, 0<\varepsilon\ll 1,\,\, \forall\, 0\leq c < c^{\ast}_{\varepsilon}.
\end{equation}
 Using \eqref{aux-eq01}, we have that
 \begin{equation*}
c^*_{\varepsilon}\geq 2\sqrt{a_{\inf}-\frac{\chi\mu a_{\sup}}{b_{\inf}-\chi\mu}-\chi\mu\varepsilon}-\chi\frac{\mu\sqrt{N}}{2\sqrt{\lambda}}\Big(\frac{a_{\sup}}{b_{\inf}-\chi\mu}+\varepsilon \Big)
 \end{equation*}
Hence
\begin{equation}\label{cc-eq07}
\liminf_{\varepsilon\to 0^+}c^*_{\varepsilon}\geq 2\sqrt{a_{\inf}-\frac{\chi\mu a_{\sup}}{b_{\inf}-\chi\mu}}-\frac{\chi\mu\sqrt{N}a_{\sup}}{2\sqrt{\lambda}(b_{\inf}-\chi\mu)} :=c_{-}^*(a,b,\chi,\lambda,\mu)
\end{equation}
This together with  \eqref{cc-eq06} implies  \eqref{lower-bound-spreading-speed}.
\end{proof}

We now prove  Theorem  \ref{spreading-properties-1}.

\begin{proof}[Proof of Theorem \ref{spreading-properties-1}]
(1) For given $\xi\in S^{N-1}$ and nonnegative front-like initial function
$u_0\in C_{\rm unif}^b(\R^N)$ in the direction of $\xi$,  there is $C>0$ such that
$$
u_0(x)\leq Ce^{-\sqrt{a_{\sup}}x\cdot\xi}, \quad \forall\ x\in\R^N.
$$
For any $\varepsilon>0$, let  $T_{\varepsilon}>0$ be as in \eqref{bb-eq01}.
As in the proof of Theorem \ref{spreading-properties}(1), let $$
K_{\varepsilon}:=\sup_{0\leq t\leq T_{\varepsilon}}\|\nabla v(\cdot,t+t_0;t_0,u_0)\|_{\infty}
$$
and
$$
\overline{U}(x,t;\xi):=Ce^{-\sqrt{a_{\sup}}(x\cdot\xi-(2\sqrt{a_{\sup}}+\chi K_{\varepsilon})t)}.
$$
By the arguments of  Theorem \ref{spreading-properties}(1),
\begin{equation*}
u(x,t+t_0;t_0,u_0)\leq \overline{U}(x,t;\xi),\quad \forall \ x\in\R^N, \ \forall \ t\in[0 , T_{\varepsilon}
].
\end{equation*}
As in the proof of Theorem \ref{spreading-properties}(1) again,
 let
$$ L_{\varepsilon}:=\frac{\mu\sqrt{N}}{2\sqrt{\lambda}}\big(\frac{a_{\sup}}{b_{\infty}-\chi\mu}+\varepsilon\big)$$
and
$$
\overline{W}(x,t;\xi):=e^{-\sqrt{a_{\sup}}(\xi\cdot x-(2\sqrt{a_{\sup}}+\chi L_{\varepsilon})t)}\overline{U}(0,T_{\varepsilon};\xi), \quad \forall t\geq 0, \ \forall\ x\in\R^N.
$$
By the arguments of  Theorem \ref{spreading-properties}(1) again,
\begin{equation*}
u(x,t+t_0;t_0,u_0)\leq \overline{W}(x,t;\xi),\quad \forall \ x\in\R^N, \ \forall \ t\geq T_{\varepsilon}.
\end{equation*}
(1) then follows.

(2) For given $\xi\in S^{N-1}$ and nonnegative front-like initial function
$u_0\in C_{\rm unif}^b(\R^N)$ in the direction of $\xi$, for any $\varepsilon> 0$,
let  $T_{\varepsilon}>0$ be such that \eqref{bb-eq01}  and \eqref{aux-eq01} hold.
By the arguments of Theorem \ref{spreading-properties}(2), for $t\geq T_{\varepsilon}+t_0$, we have
\begin{align*}\label{cc-eq02}
u_{t}(\cdot,\cdot;t_0,u_0)&\geq \Delta u(\cdot,\cdot;t_0,u_0) -\chi\nabla v(\cdot,\cdot;t_0,u_0)\cdot\nabla u(\cdot,\cdot;t_0,u_0)\nonumber\\
&+(a_{\inf}-\frac{\chi\mu a_{\sup}}{b_{\inf}-\chi\mu}-\chi\mu\varepsilon-(b_{\sup}-\chi\mu)u(\cdot,\cdot;t_0,u_0))u(\cdot,\cdot;t_0,u_0).
\end{align*}

{We claim that $\liminf_{x\cdot\xi\to -\infty} u(x,T_{\varepsilon}+t_0;t_0,u_0)>0$. Indeed, suppose that this claim is false. Then there is a sequence $\{x_{n}\}_{n\geq 1}$, such that $x_{n}\cdot\xi\to -\infty$ and $u(x_n,T_{\varepsilon}+t_0;t_0,u_0)\to 0$  as $n\to \infty$. For each $n\geq 1$, let $u_n(x,t)=u(x+x_n,t+t_0;t_0,u_0)$ and $v_n(x,t):=v(x+x_n,t+t_0;t_0,u_0)$. Observe that $u_n(x,0)=u_{0}(x+x_n)$. So, since $u_0\in C^b_{\rm unif}(\R^N)$, then $u_{n}(\cdot,0)$ are uniformly bounded and equicontinuous. By Arzela-Ascoli Theorem, without loss of generality, we may suppose that $u_{n}\to u^{**}_0$ as $n\to \infty$ locally uniformly to some $u^{**}_0\in C^b_{\rm unif}(\R^N)$. Thus, without loss of generality, by Lemma \ref{continuity with respect to open compact topology} we may suppose that $(u_n(x,t),v_n(x,t),a(x+x_n,t+t_0),a(x+x_n,t+t_0))\to (u^{**}(x,t),v^{**}(x,t),a^{**}(x,t),b^{**}(x,t))$ as $n\to \infty $ in the open compact topology. Moreover, the function $(u^{**},v^{**})$ is the classical solution of the PDE
$$
\begin{cases}
u^{**}_t=\Delta u^{**} -\chi\nabla\cdot(u^{**}\nabla v^{**}) +(a^{**}(x,t)-b^{**}(x,t)u^{**})u^{**}, \quad x\in\R^n,\, \, t>0\cr
0=(\Delta-\lambda I)v^{**}+\mu u^{**},\quad x\in\R^n,\, \, t>0,\cr
u^{**}(x,0)=u^{**}_0(x), \quad x\in\R^N.
\end{cases}
$$
But, by the hypothesis on $u_0$, there is some $\delta_0>0$ and $R_0\gg 1$ such that
$$
u_{0}(x)\geq \delta_0, \quad \text{whenever }\ x\cdot\xi\leq -R_0.
$$
Since for every $x\in\R^N$, $(x+x_n)\cdot \xi\to -\infty$ as $n\to\infty$, then $u_{0}^{**}(x)\geq \delta_0$  for every $x\in\R^N$. By Theorem \ref{Main-thm1} (i), there is $m(u^{**}_0)>0$ such that
\begin{equation}\label{aux-eq-0-0-1}
m(u_0^{**})\leq u^{**}(x,t)\leq \max\{\|u_0\|_{\infty}, \frac{a_{\sup}}{b_{\inf}-\chi\mu}\} \quad\forall\, x\in\R^N, \,\, \forall\, t\geq 0.
\end{equation}
On the other hand, we have that $u^{**}(0,T_{\varepsilon})=\lim_{n\to\infty}u(x_n,T_{\varepsilon}+t_0;t_0,u_0)=0$. Which contradicts \eqref{aux-eq-0-0-1}. Thus the claim holds.

}

 Hence there is a bounded, continuous, and non-increasing function $\phi(s)$ with
$\lim_{s\to -\infty}\phi(s)>0$ and $\phi(s)=0$ for $s\gg 0$ such that
$$
u(x,T_{\varepsilon}+t_0;t_0,u_0)\ge \phi(x\cdot\xi)\quad \forall \,\, x\in\R^N.
$$

Let $u^*(x,t)$ be the solution of \eqref{spreading-eq1} with $q_0= \chi\frac{\mu\sqrt{N}}{2\sqrt{\lambda}}\Big(\frac{a_{\sup}}{b_{\inf}-\chi\mu}+\varepsilon \Big)$,
$a_0=a_{\inf}-\frac{\chi\mu a_{\sup}}{b_{\inf}-\chi\mu}-\chi\mu\varepsilon$,  $b_0=b_{\sup}-\chi\mu$, and $u^*(x,0)=\phi(x)$.
By comparison principle for parabolic equations, we have
$$
u_{x}^*(x,t)<0\quad \forall\,\, x\in\R,\,\,\, t>0.
$$

Let $U^*(x,t+T_{\varepsilon}+t_0)=u^*(x\cdot\xi,t)$ for $x\in\R^N$ and $t\ge 0$. Then for $t\ge T_{\varepsilon}+t_0$,
\begin{align*}
U_{t}^*(\cdot,\cdot)&\le \Delta U^*(\cdot,\cdot) -\chi\nabla v(\cdot,\cdot;t_0,u_0)\cdot\nabla U^*(\cdot,\cdot)\nonumber\\
&+(a_{\inf}-\frac{\chi\mu a_{\sup}}{b_{\inf}-\chi\mu}-\chi\mu\varepsilon-(b_{\sup}-\chi\mu)U^*(\cdot,\cdot))U^*(\cdot,\cdot).
\end{align*}
Observe  that $U^*(x,T_{\varepsilon}+t_0)\le u(x,T_{\varepsilon}+t_0;t_0,u_0)$. Then, by comparison principle for parabolic equations,
$$
u(x,t;t_0,u_0)\ge U^*(x,t)\quad \forall\,\, x\in\R^N,\,\, t\ge T_{\varepsilon}+t_0.
$$
By Lemma \ref{spreading-lm1},
$$
\liminf_{t\to\infty,x\cdot\xi\le ct} u(x,t;t_0,u_0)>0\quad \forall\,\, 0<c<c_\varepsilon^*,
$$
where
  \begin{equation*}
c^*_{\varepsilon}=2\sqrt{a_{\inf}-\frac{\chi\mu a_{\sup}}{b_{\inf}-\chi\mu}-\chi\mu\varepsilon}-\chi\frac{\mu\sqrt{N}}{2\sqrt{\lambda}}\Big(\frac{a_{\sup}}{b_{\inf}-\chi\mu}+\varepsilon \Big).
 \end{equation*}
 Letting $\varepsilon \to 0$, (2) follows.
\end{proof}

\end{document}